\newcommand{\R}{ \mathbb{R} }
\newcommand{\ts}{ \tilde s }
\newcommand{\tth}{ \tilde \theta }
\newcommand{\tg}{ \tilde g }
\newcommand{\tG}{ \tilde G }
\newcommand{\tr}{ \tilde r }
\newcommand{\Ge}{ \Gamma_{\epsilon
} }
\newcommand{\m}{\mathbf{m} }
\newcommand{\n}{ \mathbf{n} }
\newtheorem{definition}{Definition}
\theoremstyle{plain}
\newtheorem{theorem}{Theorem}[section]
\newtheorem{lemma}[theorem]{Lemma}
\newtheorem{proposition}[theorem]{Proposition}
\newtheorem{remark}{Remark}[theorem]
\newcommand{\bremark}{\begin{remark} \em}
	\newcommand{\eremark}{\end{remark} }
\numberwithin{equation}{section}
\begin{document}
\title{Stable solutions of $U(1)$ Yang-Mills-Higgs model in $\R^4$}

\author[Yong Liu]{Yong Liu}
\address{Yong Liu, School of Mathematics and Statistics, Beijing Technology and
Business University, Beijing, China.}
\email{yliumath@btbu.edu.cn}
\author[Juncheng Wei]{Juncheng Wei}
\address{Juncheng Wei, Department of Mathematics, Chinese University of Hong
Kong, Shatin, NT, Hong Kong.}
\email{wei@math.cuhk.edu.hk}
\author[Zikai Ye]{Zikai Ye}
\address{Zikai Ye, Department of Mathematics, Chinese University of Hong
Kong, Shatin, NT, Hong Kong.}
\email{zkye@math.cuhk.edu.hk}

\maketitle

\begin{abstract}
We give a positive answer to the conjecture of Liu-Ma-Wei-Wu in \cite{LMWW} that the family of entire solutions to the $U(1)$-Yang-Mills-Higgs equation constructed by the gluing method in that paper are stable. This is the first family of examples of nontrivial stable critical points to the $U(1)$-Yang-Mills-Higgs model in higher dimensional Euclidean space. Intuitively, the stability of these solutions corresponds to the fact that holomorphic curves are area-minimizing. We also show that these entire solutions are non-degenerate. Our proof is based on detailed analysis of the linearized operators around this family and the spectrum estimates of the Jacobi operator by Arezzo-Pacard \cite{ArePac}. 
\end{abstract}

\section{Introduction and statement of main result}
Yang-Mills-Higgs type functionals and their associated Euler-Lagrange equations are among the most important models in modern physics. The study of these models, for instance, the instanton and monopole equations\cite{Donaldson, Taubes}, has triggered many fascinating mathematical theories and results. 

Here we are interested in a Yang-Mills-Higgs model with Ginzburg-Landau type potential and $U(1)$ gauge group (also called magnetic Ginzburg-Landau equation, in low dimensions):
\begin{equation}
 \label{YMH}
 \mathcal E(\psi,A):=\int_{M}\left\{  \left\vert \nabla_{A}\psi\right\vert ^{2}+\left\vert dA\right\vert
^{2}+\frac{\lambda}{4}\left(  1-\left\vert \psi\right\vert ^{2}\right)
^{2}\right\}.
\end{equation}
Recently there has been increasing interest in this action functional. Lin-Rivière, Parise-Pigati-Stern, Pigati-Stern \cite{LR,Parise, Pigati-Stern} studied the asymptotic behavior of critical points of this functional in the self-dual case. Among other things, they showed that a sequence of solutions with uniformly bounded energies will converge in a suitable sense to a codimension two integral varifold. When the solutions are minimizers, the resulting varifold will also be area-minimizing. This extends the previous result of Hong-Jost-Struwe \cite{HJS} for Riemann surfaces and  Bradlow \cite{B} for K\"ahler manifolds.

On the opposite side, using a variational argument, De Pilippis-Pigati \cite{Philippis-Pigati} proved the existence of a family of solutions concentrating on given non-degenerate minimal submanifolds of codimension two. Presumably, the Morse indices of these solutions should be related to that of the minimal submanifolds. Note that in the self-dual case, classification results for entire solutions have obtained by Taubes \cite{T2,T1} for finite energy critical points in 2d and De Philippis-Halavati-Pigati in \cite{PHP}, which states that local minimizers satisfying suitable energy growing estimates have to be trivial. At this point, it is worth pointing out that in the non-self dual case,  Rivi\`ere \cite{Riv} proved in the 2d case that local minimizers are unique up to gauge transformation, provided that the coupling constant $\lambda$ is large enough.

The aforementioned results and their proof are indeed partly inspired that of the $\Gamma$-convergence and related results for the Allen-Cahn functional: \[
\int\left\{  \left\vert \nabla u\right\vert ^{2}+\frac{1}{2}\left(
1-u^{2}\right)  ^{2}\right\},
\]where $u$ is a scalar function. The Allen-Cahn equation
\begin{equation}
\label{AllenCahn}
\Delta u+u-u^3=0 \ \ \mbox{in} \ \R^n, 
\end{equation}
which is the Euler-Langrange equation of this functional, can be used as a regularization of codimension one minimal submanifolds. This point of view turns out to be very useful and has some important applications in the minimal surface theory, see for instances \cite{CM1,CM2, Dey, GG, Gu}. It should also be emphasized that the codimension two case imposes more technical difficulties than the codimension one case.  

To better state our main results, let us mention some other  interesting nontrivial results  obtained so far for the codimension one case. The celebrated De Giorgi conjecture \cite{DeG} states that for any bounded entire solution $u$ to the Allen-Cahn equation (\ref{AllenCahn}) which is strictly monotone in one direction ($x_n$-direction for example) should be one-dimensional. This is established in dimension $2$ by Ghoussoub-Gui \cite{GG1} and in dimension $3$ by Ambrosio-Cabr\'e \cite{AC}. Partial results are obtained by Ghoussoub-Gui \cite{GG2} in dimension $4$ and $5$. Under the additional assumption that
\begin{equation*}
\lim_{x_{n}\to \pm \infty}u(x^\prime,x_n)=\pm 1,\text{ for all }x^\prime \in \R^n,
\end{equation*}
Savin \cite{S} proves the affirmative of the De Giorgi conjecture for $4\leq n \leq 8$. On the other hand,  Gluing techniques turn out to be  powerful in the construction of entire solutions of the Allen-Cahn equation. As a matter of fact, counterexamples of the De Giorgi conjecture in dimension $9$ have been constructed by Del Pino-Kowalczyk-Wei \cite{PinoKow1}, using the Bombieri-De Giorgi-Giusti minimal graphs.

Later on,  entire stable solutions to the Allen-Cahn equation  in dimension $n\geq 8$ are also constructed in \cite{PW} based on the family of minimal submanifolds asymptotic to the famous Simons' cone. These solutions are shown to be global minimizers in Liu-Wang-Wei \cite{LWW1}. Other stable solutions are saddle solutions on Simons' cones (Cabr\'e-Terra \cite{CT1, CT2}). They are stable in higher dimensions (Cabr\'e \cite{Ca}, Liu-Wang-Wei \cite{LWW2}).   The key ingredient in the proof of stability is the existence of a positive kernel $\phi>0$ of the linearized operator $\Delta+1-3u^2$ at the solution $u$. Once such $\phi$ exists, then the stability follows from testing the Allen-Cahn equation by $\phi^{-1}\psi^2$ to get
\begin{equation}
\label{sta}
\int_{\R^n} |\nabla\psi|^2-\psi^2 +3u^2\psi^2=\int_{\R^n}|\nabla\psi-\phi^{-1}\psi\nabla\phi|^2\geq 0
\end{equation}
for any compactly supported test function $\psi$. However, a similar argument using (\ref{sta}) does not work for the stability of the Yang-Mills-Higgs equation that we will discuss later, since the solution is vector-valued and is not simply a scalar function. We also would like to mention that in $\R^3$, Del Pino-Kowalczyk-Wei \cite{PinoKow2} constructed a family of finite Morse index solutions to the Allen-Cahn equation that concentrated near complete, embedded, non-degenerate minimal surface with finite total curvature. The Morse indices of these solutions coincide with the concentrated minimal surfaces. In fact, all these solutions constructed by gluing techniques exhibit suitable concentration near codimension one minimal submanifolds, while in the normal direction, they look like the standard one-dimensional heteroclinic solution of the Allen-Cahn equation.

Let $\Delta_A:=\nabla^*_A \nabla_A$ be the connection Laplacian. For the Yang-Mills-Higgs model (\ref{YMH}) in the Euclidean space with a trivial Hermian bundle, the Euler-Lagrange equation has the form
\begin{equation}
\label{GLE}
\begin{cases}
    -\Delta_A \psi +\frac{\lambda}{2}(|\psi|^2-1)\psi=0\text{ in }\R^n,\\
    d^*dA-Im(\nabla_A\psi\cdot \bar\psi)=0\text{ in }\R^n.
\end{cases}
\end{equation}
Naturally, in view of the developments for the Allen-Cahn function, one expects to be able to build solutions to $(\ref{GLE})$ based on the standard vortex solutions (See Section 2 for more precise form of these solutions) in the two-dimensional plane, in replace of the one dimensional heteroclinic solution to the Allen-Cahn equation. The stability(or instability) of these 2d vortex solutions depends on the parameter $\lambda$ and the degree of the solutions and is resolved in Gustafson-Sigal \cite{GS}. A quantitative stability for critical points is proved by Halavati \cite{H1,H2}. Detailed analysis, including the $\Gamma$-convergence theory, of solutions to the equation \eqref{GLE} in dimension two is discussed in Sandier-Serfaty \cite{SS}. Actually, the classical Ginzburg-Landau equation has also been  discussed there. The main difference between the theory of the Ginzburg-Landau equation and \eqref{GLE} is their asymptotic behavior at far field. Vortex solutions to \eqref{GLE} decay exponentially fast, while that of the Ginzburg-Landau only decays at an algebraic rate, implying that they are much more difficult to deal with.  Here we will not touch on the classical Ginzburg-Landau equation, only refer to \cite{SS} and the references therein for more discussion, see also \cite{BBH,PacRiv}.

At this stage, we already know that entire solutions of the Yang-Mills-Higgs model with multiple vortex points can been constructed in \cite{TW} in the two dimensional plane. In higher dimensions, Brendle \cite{Br} and Badran-Del Pino  \cite{BM1,BM2,BM3} are able to  use gluing construction to build manuy interesting solutions concentrated on codimension 2 minimal submanifolds based on the aforementioned 2d vortex solutions. 

Since holomorphic curves are area-minimizing,  many codimension two stable minimal submanifolds already exist in $\R^4$. In \cite{LMWW}, a family of entire solutions is constructed using Lyapunov-Schmidt reduction arguments, they concentrated on suitable rescaling of the codimension 2 minimal submanifold studied by Arezzo-Pacard in \cite{ArePac}, given by 
\begin{equation}
\label{mininalAP}
    \Gamma =\frac{e^{is}}{\sqrt{\sin 2s}}\Theta,
\end{equation}
where $s\in(0,\frac\pi 2), \Theta=(\cos\theta,\sin\theta)\in \mathbb S^1, \theta\in [0,2\pi)$.
Geometrically, this manifold has two planar ends and can be regarded as a desingularization of the union of two orthogonal planes. Let us denote these solutions by $U_{\epsilon}$, where $\epsilon>0$ is a sufficiently small rescaling parameter. Here we show that they are stable critical points of the Yang-Mills-Higgs model (\ref{YMH}), with the manifold $M$ being the four-dimensional Euclidean space $\R^4$. Our main result can be stated in the following

\begin{theorem}
\label{main}
The solutions $U_{\epsilon}$ are stable and non-degenerate. 
\end{theorem}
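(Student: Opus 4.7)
The plan is to combine the known two-dimensional stability of the standard magnetic vortex with the spectral properties of the Jacobi operator of the Arezzo-Pacard surface $\Gamma$, exploiting that $\Gamma$ is a holomorphic curve (hence calibrated and area-minimizing). I would work in Fermi coordinates $(y,z)$ around $\Ge:=\epsilon^{-1}\Gamma$, with $y\in\Ge$ and $z\in\R^2$ the normal coordinate; then $U_\epsilon$ is well-approximated fiberwise by a fixed degree-one vortex $V(z)$. After imposing a Coulomb-type gauge, the linearized operator $\mathcal L_\epsilon$ at $U_\epsilon$ splits to leading order as the $z$-transverse linearized vortex operator $L_V$ plus tangential connection-Laplacian terms along $\Ge$.

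The central step is a fiberwise orthogonal decomposition of an arbitrary compactly supported perturbation
\[
\Phi(y,z)=\sum_{j}\eta_j(y)\,\Pi_j(z)+\Phi^{\perp}(y,z),
\]
where the $\Pi_j(z)$ span $\ker L_V$ modulo gauge (essentially the normal translations and rotations of the vortex) and $\Phi^{\perp}(y,\cdot)$ is $L^2(\R^2)$-orthogonal to these at each $y$. On the orthogonal component, the stability of the 2d vortex due to Gustafson-Sigal gives a uniform spectral gap $\langle L_V\Phi^{\perp},\Phi^{\perp}\rangle\geq c\|\Phi^{\perp}\|^2$ pointwise in $y$, so integrating over $\Ge$ the second variation is coercive in $\Phi^{\perp}$ up to an $O(\epsilon)$ error. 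On the parallel component, expansion to leading order yields
\[
\mathcal Q_\epsilon\!\left(\Phi_{\parallel},\Phi_{\parallel}\right)=\int_{\Gamma}\bigl\langle J_\Gamma\eta,\eta\bigr\rangle\,dA+O(\epsilon)\|\eta\|^2,
\]
where $J_\Gamma$ is the Jacobi operator of $\Gamma$. Because $\Gamma$ is holomorphic in $\R^4$, the spectrum estimates of Arezzo-Pacard \cite{ArePac} give $J_\Gamma\geq 0$ with kernel spanned only by the infinitesimal symmetries of $\R^4$ preserving $\Gamma$. The cross term is $O(\epsilon)$ due to the pointwise $L^2$-orthogonality and is absorbed by Cauchy-Schwarz into the two diagonal contributions, yielding $\mathcal Q_\epsilon(\Phi,\Phi)\geq 0$, which is stability.

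For non-degeneracy, one runs the same decomposition on $\Phi\in\ker\mathcal L_\epsilon$. The coercivity above forces $\Phi^{\perp}\equiv 0$, and the equation reduces on the parallel component to $J_\Gamma\eta=0$ on $\Gamma$. The non-degeneracy of $\Gamma$ established in \cite{ArePac} then forces $\eta$ into the finite-dimensional span of rigid-motion generators, which is precisely the set of moduli absorbed by translations and rotations built into the gluing construction of \cite{LMWW}; hence $\Phi$ is trivial modulo gauge, giving non-degeneracy.

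The main obstacle will be making the decomposition and the coercivity of $L_V$ on $(\ker L_V)^{\perp}$ uniform in $\epsilon$. The normal $\R^2$ fibers carry the non-trivial normal connection of $\Ge$, so the vortex translation modes $\Pi_j$ must be parallel-transported in a gauge-equivariant way, and the pointwise spectral gap for $L_V$ must be controlled uniformly over the entire surface, in particular near the two asymptotically planar ends of $\Gamma$ (where $\Gamma$ decays slowly) and near the neck region of the desingularization. Carrying this out will require the refined exponential decay estimates for $U_\epsilon$ away from $\Ge$ together with careful bookkeeping of the error profiles produced by the Lyapunov-Schmidt scheme of \cite{LMWW}, so that the $O(\epsilon)$ remainders really can be absorbed by the leading coercive and Jacobi terms.
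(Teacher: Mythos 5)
Your overall architecture matches the paper's: decompose perturbations fiberwise along $\Gamma_\epsilon$ into the span of the two vortex-translation modes $\mathcal T_1,\mathcal T_2$ plus an orthogonal remainder; use Gustafson--Sigal coercivity of the 2d degree-one vortex on the orthogonal piece; and reduce the parallel piece to the Jacobi operator of $\Gamma$. This is indeed the paper's skeleton (Propositions~\ref{TestKernel} and~\ref{EnergyEstiamte}, Lemma~\ref{EigenBound}). However, there is a genuine gap in the final absorption step, and it is exactly the hard part of the problem.

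You write
$\mathcal Q_\epsilon(\Phi_\parallel,\Phi_\parallel)=\int_\Gamma \langle J_\Gamma\eta,\eta\rangle\,dA+O(\epsilon)\|\eta\|^2$
and claim the $O(\epsilon)$ term and the $O(\epsilon)$ cross term ``are absorbed by Cauchy--Schwarz into the two diagonal contributions.'' But $\Gamma$ is a \emph{non-compact} surface with planar ends, so $J_\Gamma$ is only nonnegative, not coercive: its essential spectrum reaches $0$ and its bounded kernel consists of the six non-$L^2$ Jacobi fields~\eqref{Jacobifields}. There is no spectral gap on the parallel component, so an additive $O(\epsilon)\|\eta\|^2$ error cannot be absorbed by the Jacobi quadratic form — you could have $\langle J_\Gamma\eta,\eta\rangle$ tiny compared to $\|\eta\|^2$. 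The orthogonal component \emph{does} enjoy a uniform gap from the vortex stability, but that does not help with errors concentrating on the parallel direction. This is not merely a matter of ``careful bookkeeping''; with only $J_\Gamma\geq 0$ in hand, the sign conclusion $\mathcal Q_\epsilon\geq 0$ does not follow from the estimate you wrote.

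What the paper does instead is a contradiction argument built on a \emph{weighted} eigenvalue problem with weight $\rho^{-6}$ (problems~\eqref{GLEigenBR}, \eqref{GLEigenR4}). It first proves (Lemma~\ref{EigenBound}) that any nonpositive eigenvalue is $O(\epsilon^2)$, then proves exponential concentration of the eigenfunction (Lemma~\ref{EigenDecay}), then shows via the projection calculus of Proposition~\ref{TestKernel} and the linear theory (Lemma~\ref{lineartheory}) that the putative eigenfunction is $k_1\mathcal T_1+k_2\mathcal T_2+O(\epsilon)$ with $N=k_1\m+k_2\n$ satisfying an approximate Jacobi eigenvalue equation, and finally passes to the limit $\epsilon\to 0$. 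At that limit, stability of $\Gamma$ forces the limiting eigenvalue to be $0$; the contradiction then comes from the \emph{non-degeneracy} of $\Gamma$ (Lemma~\ref{Jacobinondeg}, proved in \cite{LMWW}, not in \cite{ArePac} as you attribute), combined with a weighted orthogonality to the translated kernels $Z_j$ obtained by testing the eigenvalue equation against $Z_j$. So the stability claim itself already requires the non-degeneracy of $\Gamma$ as an essential input, and it requires a compactness argument rather than a direct coercivity estimate. Your proposal would need this extra limiting machinery to close; as written, the proof of $\mathcal Q_\epsilon\geq 0$ does not go through.

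One secondary inaccuracy: in the non-degeneracy part, the conclusion is not that $\Phi$ is trivial modulo gauge but that any bounded kernel lies in $Z_{sym}$, the span of the six geometric kernels $Z_j$ (which are genuine non-gauge kernels coming from translations, dilation, and the $J$-rotation) together with the gauge kernels; the paper's final paragraph derives this by running the same decomposition on a hypothetical extra kernel $Z_7$ orthogonal to the $Z_j$ and deducing a bounded Jacobi field orthogonal to all $N_j$, contradicting Lemma~\ref{Jacobinondeg}.
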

The notion of stable means that the quadratic form associated to the Yang-Mills-Higgs functional is always nonnegative. In view of its counterpart in the Allen-Cahn case \cite{PinoKow2}, intuitively, this should be true due to the fact that the concentrating minimal submanifold is area minimizing. However, the proof will involve many technical details. The main step is to get precise asymptotic behavior of the solutions. This will then enable us to reduce the stability of solutions to the analysis of the Jacobi operator. It is expected that this family of solutions are also minimizers of the functional, but this problem seems to be much more difficult. We believe that this strategy can be applied to prove that the Morse index of some critical points of $U(1)$-Yang-Mills-Higgs functional are determined by its concentrated minimal submanifolds, as in the Allen-Cahn case.

The non-degeneracy of $U_\epsilon$ means that any bounded kernel of linearized operator $L(\cdot;U_\epsilon)$ at $U_\epsilon$ is a linear combination of $Z_j$, $j=1,\cdots, 6$(corresponding to translation or rotation invariance of the equation) and gauge kernels(which arise from the gauge invariance) to be introduced more precisely in Section 2.

The rest of the paper is devoted to proving Theorem \ref{main}. It is organized in the following way: In Section 2, we recall some preliminary results, including the properties of the minimal submanifold $\Gamma$, the vortex solution and related apriori estimates. In Section 3,  we analyze the linearized operator $\mathbb L(\cdot;\psi,A)$ in detail and build up its relationship between the Jacobi operator $L_\Gamma$ of $\Gamma$. Section 4 finishes the proof of  Theorem \ref{main}.

{\center{\bf Acknowledgement}.} Y. Liu is supported by the National Key R\&D Program of China 2022YFA1005400 and NSFC 12471204. J.C. Wei is supported by National Key R\&D Program of China 2022YFA1005602, and Hong Kong General Research Fund ``New frontiers in singular limits of nonlinear partial differential equations''. The third author thanks Wangzhe Wu for useful discussion.

\section{Preliminaries}
In this section, we collect some facts which will be used later, including the stability of the minimal submanifold $\Gamma$, Fermi coordinates, and related computations. We also list the properties of 2d vortex solutions and the vortex solutions constructed in \cite{LMWW}. Apriori estimates of the eigenvalue problems and linear theory for the linearized operator are also briefly discussed.

\subsection{The minimal submanifold $\Gamma$}
Recall that the minimal submanifold $\Gamma$ defined by (\ref{mininalAP}) is of codimension two. It is a holomorphic curve in $\mathbb{C}^2$. Hence, it is area-minimizing, in particular, it is stable. That is,
\begin{proposition}
$\Gamma$ is a stable minimal submanifold.
\end{proposition}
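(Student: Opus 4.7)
The plan is to invoke the classical calibration argument: holomorphic submanifolds of K\"ahler manifolds are calibrated by the K\"ahler form, hence area-minimizing in their homology class, and in particular stable.

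Concretely, I would first identify a complex structure $J$ on $\R^4$ compatible with the Euclidean metric and its orientation for which $\Gamma$ is a complex curve (the existence of such a $J$ is part of the description of $\Gamma$ in \cite{ArePac}; note that the standard identification $\R^4 \cong \mathbb{C}^2$ with $(z_1,z_2)=(x_1+ix_2,x_3+ix_4)$ makes $\Gamma$ Lagrangian rather than complex, so one does need to be careful about which $J$ is used). Let $\omega$ be the associated K\"ahler form. By Wirtinger's inequality, $\omega|_P \leq dA_P$ on every oriented real $2$-plane $P \subset T\R^4$, with equality if and only if $P$ is a complex line for $J$. Since $\Gamma$ is holomorphic, $\omega|_\Gamma = dA_\Gamma$, i.e.\ $\omega$ calibrates $\Gamma$.

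Then the standard Stokes argument applies: for any compact piece $K \subset \Gamma$ and any compact $2$-surface $\Sigma$ with $\partial\Sigma = \partial K$ such that $\Sigma - K$ bounds a $3$-chain $E$, closedness of $\omega$ yields
\begin{equation*}
\mathrm{Area}(K) \;=\; \int_K \omega \;=\; \int_\Sigma \omega + \int_E d\omega \;=\; \int_\Sigma \omega \;\leq\; \mathrm{Area}(\Sigma).
\end{equation*}
Thus $\Gamma$ minimizes area under compactly supported deformations, and stability---the nonnegativity of the second variation along any smooth compactly supported normal variation---follows at once, since such a variation produces a one-parameter family $\Gamma_t$ whose area is minimized at $t=0$. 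There is no substantive obstacle in this argument; the only point worth remarking is that $\Gamma$ is non-compact (it has two planar ends), but since stability is formulated via compactly supported perturbations, this causes no issue.
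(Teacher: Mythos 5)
Your argument is exactly the one the paper invokes (and leaves implicit): $\Gamma$ is a complex curve for some compatible complex structure on $\R^4$, hence calibrated by the associated K\"ahler form, hence area-minimizing under compactly supported deformations, hence stable. You merely spell out the Wirtinger--Stokes calibration step that the paper compresses into the sentence preceding the proposition, and your caveat about which $J$ to use is well taken: under the usual identification $(z_1,z_2)=(x_1+ix_2,x_3+ix_4)$ one checks directly that $\Gamma$ is special Lagrangian (the pullback of $dz_1\wedge dz_2$ is real and the pullback of $\omega_{\mathrm{std}}$ vanishes), and it becomes a complex curve only after a hyperk\"ahler rotation; the calibration is the corresponding rotated K\"ahler form. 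Same proof, just with the bookkeeping made explicit.
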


After a rescaling by a small parameter $\epsilon$, we get a family of minimal submanifolds, all of them being far away from the origin, with the form
\begin{equation*}
    \Gamma_\epsilon =\frac{e^{i\epsilon\tilde s}}{\epsilon\sqrt{\sin 2\epsilon\tilde s}}\tilde \Theta,
\end{equation*}
where $\ts=\epsilon^{-1}s\in (0,\frac{\pi}{2\epsilon})$, $\tth=\epsilon^{-1}\theta\in [0,\frac{2\pi}{\epsilon})$ and $\tilde \Theta=(\cos \epsilon\tth, \sin \epsilon\tth)$.  \cite{LMWW} constructed a solution $(\psi_\epsilon,A_\epsilon)$ near $\Gamma_\epsilon$.  To describe the solutions in a more precise way, it will be necessary to use the Fermi coordinate, to be recalled in sequel.

Let $\m=ie^{-i\epsilon \ts}\tilde\Theta, \n=ie^{i\epsilon\ts}\tilde\Theta^\perp$ be the two unit normal vectors of $\Gamma_\epsilon$, where $\tilde \Theta^\perp=(-\sin\epsilon\tth,\cos\epsilon\tth)$. Define a map $T:(\ts,\tth,a,b)\to (z_1,z_2)\in \mathbb C^2$:
\begin{equation*}
\begin{aligned}
    Y&=\Gamma_\epsilon+a\m+b\n\\
    &=\left(\frac{\cos \epsilon \ts}{\epsilon\sqrt{\sin 2\epsilon\ts}}\tilde\Theta+a\sin \epsilon\ts\cdot \tilde\Theta-b\sin \epsilon\ts \cdot \tilde\Theta^\perp, \frac{\sin \epsilon\ts}{\epsilon\sqrt{\sin 2\epsilon\ts}}\tilde\Theta+a\cos \epsilon\ts\cdot \tilde\Theta+b\cos \epsilon\ts \cdot \tilde\Theta^\perp\right).
\end{aligned}
\end{equation*}
Write $\Sigma_\epsilon:=\{(\ts,\tth,a,b): a^2+b^2<\frac{1}{\epsilon\sin 2\epsilon\ts}=:r_{\epsilon}^2\}$. These provide a local coordinate system for the tubular neighborhood $\Sigma_\epsilon$, called the Fermi coordinates. We remark that actually, this local coordinate system is well-defined in $\{(\ts,\tth,a,b): a^2+b^2<\epsilon^{-1}r_{\epsilon}^2\}$. But $\Sigma_\epsilon$ is sufficient for later purposes. 

In the Fermi coordinates, the  metric is given by
\begin{equation*}
    (g_{ij})=
    \begin{pmatrix}
        \tilde a_{11}& \tilde a_{12}& 0& 0\\
        \tilde a_{21}& \tilde a_{22}& -b\epsilon\cos (2\epsilon\ts)& a\epsilon\cos (2\epsilon\ts)\\
        0& -b\epsilon\cos (2\epsilon\ts)& 1& 0\\
        0& a\epsilon\cos (2\epsilon\ts)& 0& 1
    \end{pmatrix},
\end{equation*}
where
\begin{equation*}
    \tilde A:=(\tilde a_{ij})=
    \begin{pmatrix}
        \left(\epsilon a-\frac{1}{ (\sin 2\epsilon\ts)^{\frac32}}\right)^2+\epsilon^2 b^2& \frac{-2b\epsilon}{\sqrt{\sin 2\epsilon\ts}}\\
        \frac{-2b\epsilon}{\sqrt{\sin 2\epsilon\ts}}& \epsilon^2(a^2+b^2)+\frac{1}{\sin 2\epsilon\ts}+2a\epsilon\sqrt{\sin 2\epsilon\ts}
    \end{pmatrix}
\end{equation*}
is a metric on $\Gamma_{\epsilon}$. Its determinant will be denoted by $G=\det(g_{ij})$. One of the reasons that we will do computations in these coordinates, instead of some moving frames, is that all these functions are completely explicit.

For simplicity, we define $\varrho$ such that $\varrho(x):=(\sin 2s)^{-1}$ for $x\in \Sigma_1$ and extend it outside $\Sigma_1$ to be a smooth function on $\R^4$ with
\begin{equation*}
    c_1(1+|x|)\leq \varrho(x)\leq c_2(1+|x|)\text{ in }\R^4
\end{equation*}
for some constants $c_1,c_2>0$. Then the inverse of the metric matrix can be expanded as
\begin{equation*}
\begin{aligned}
    &(g^{ij})=\begin{pmatrix}
        \rho^{-6}+2a\epsilon\rho^{-9}& 2b\epsilon\rho^{-7}& 0& 0\\
        2b\epsilon\rho^{-7}& \rho^{-2}-2a\epsilon\rho^{-5}& b\epsilon\rho^{-2}\cos2\epsilon\ts& -a\epsilon\rho^{-2}\cos2\epsilon\ts\\
        0& b\epsilon\rho^{-2}\cos2\epsilon\ts& 1& 0\\
        0& -a\epsilon\rho^{-2}\cos2\epsilon\ts& 0& 1
    \end{pmatrix}\\
    &+\begin{pmatrix}
        3r^2\epsilon^2\rho^{-12}& 0& b^2\epsilon^2\rho^{-7}\cos2\epsilon\ts& -ab\epsilon^2\rho^{-7}\cos2\epsilon\ts\\
        0& 3r^2\epsilon^2\rho^{-8}& -2ab\epsilon^2\rho^{-5}\cos2\epsilon\ts& 2a^2\epsilon^2\rho^{-5}\cos2\epsilon\ts\\
        b^2\epsilon^2\rho^{-7}\cos2\epsilon\ts& -2ab\epsilon^2\rho^{-5}\cos2\epsilon\ts& b^2\epsilon^2\rho^{-2} \cos^2 2s& -ab\epsilon^2\rho^{-2} \cos^2 2s\\
        -ab\epsilon^2\rho^{-7}\cos2\epsilon\ts& 2a^2\epsilon^2\rho^{-5}\cos2\epsilon\ts& -ab\epsilon^2\rho^{-2} \cos^2 2s& a^2\epsilon^2\rho^{-2} \cos^2 2s
    \end{pmatrix}+O(\epsilon^3\rho^{-4}),
\end{aligned}
\end{equation*}
where $\rho(x):=\varrho(\epsilon x)$.

With these notations at hand, we can define the following inner products in normal space:
\begin{definition}
Let $(y_1,y_2,y_3,y_4)=(\ts,\tth,a,b)$. For any complex-valued functions $\zeta_1(\ts,\tth,a,b)$ and $\zeta_2(\ts,\tth,a,b)$, one-forms $C_1=C_{1i}dy_i$ and $C_2=C_{2j}dy_j$, we define the following inner products in normal space:
\begin{equation*}
\langle \zeta_1, \zeta_2 \rangle:=\Re(\zeta_1 \overline{\zeta_2}),\ \langle C_1,C_2\rangle:=g^{ij}C_{1i}C_{2j},
\end{equation*}
and
\begin{equation*}
\langle (\zeta_1,C_1),(\zeta_2,C_2)\rangle:=\langle \zeta_1, \zeta_2 \rangle+\langle C_1,C_2\rangle=\Re(\zeta_1 \overline{\zeta_2})+g^{ij}C_{1i}C_{2j}.
\end{equation*}

\end{definition}

Now let us turn to the Jacobi operator $L_\Gamma$ of the minimal submanifold $\Gamma$. For a normal vector field $N=ie^{-is}k_1\Theta+ie^{is}k_2\Theta^\perp=k_1\m+k_2\n$, we have
\begin{equation*}
\begin{aligned}
L_{\Gamma} N&=\Delta_{\Gamma}^{\nu}N+2\rho^{-6} N\\
&=[\rho^{-6} k_{1ss}+\rho^{-2} k_{1\theta\theta}+2\rho^{-4}\cos 2s k_{1s}-2\rho^{-2}\cos 2sk_{2\theta}+\rho^{-2}(2\rho^{-4}-\cos^2 2s)k_1]\m\\
&+[\rho^{-6} k_{2ss}+\rho^{-2} k_{2\theta\theta}+2\rho^{-4}\cos 2s k_{2s}+2\rho^{-2}\cos 2sk_{1\theta}+\rho^{-2}(2\rho^{-4}-\cos^2 2s)k_2]\n,
\end{aligned}
\end{equation*}
where $\Delta_{\Gamma}^{\nu}$ is the connection Laplacian on the normal bundle $\mathcal{N}\Gamma$ of $\Gamma$ in $\R^4$. See \cite{ArePac} for details.

A normal vector field $N=k_1\m+k_2\n$ is called a Jacobi field on $\Gamma$ if $L_\Gamma N=0$. We have the following bounded Jacobi fields generated by rigid motions:
\begin{equation}\label{Jacobifields}
\begin{aligned}
N_1&=\cos\theta\sin s\m+\sin\theta\sin s\n,\ N_2=\sin\theta\sin s\m-\cos\theta\sin s\n,\\ 
N_3&=\cos\theta\cos s\m-\sin\theta\cos s\n,\ N_4=\sin\theta\cos s\m+\cos\theta\cos s\n,\\
N_5&=(\sin 2s)^{\frac 12}\m,\ N_6=(\sin 2s)^{\frac 12} \n.
\end{aligned}
\end{equation}
Here $N_j$, $j=1,2,3,4$ are generated by the translation in $x_j$-direction. $N_5$ is generated by dilation and $N_6$ is generated by the action $x\mapsto Jx$ in $O(4)/SU(2)$, where
\begin{equation*}
J=\begin{pmatrix}
0 & -1 & 0 & 0\\
1 & 0 & 0 & 0\\
0 & 0 & 0 & 1\\
0 & 0 & -1 & 0
\end{pmatrix}.
\end{equation*}

It is shown in \cite{LMWW} that the submanifold $\Gamma$ is non-degenerate.
\begin{lemma}\label{Jacobinondeg}
$\Gamma$ is non-degenerate in the sense that any bounded Jacobi field on $\Gamma$ is a linear combination of $N_j$, $j=1,\dots,6$.
\end{lemma}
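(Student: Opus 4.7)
The plan is to classify all bounded Jacobi fields by exploiting the complex-line-bundle structure of $\mathcal N\Gamma$ (since $\Gamma\subset\mathbb C^2$ is holomorphic) together with Fourier analysis in $\theta$. First I would introduce the complex combination $K:=k_1+ik_2$: substituting $N=k_1\m+k_2\n$ into the displayed formula for $L_\Gamma N$ shows that the two cross-coupling terms $\pm 2\rho^{-2}\cos 2s\,k_{j\theta}$ fuse into $2i\rho^{-2}\cos 2s\,K_\theta$, and the equation $L_\Gamma N=0$ becomes the single scalar complex PDE
\begin{equation*}
\rho^{-6}K_{ss}+\rho^{-2}K_{\theta\theta}+2\rho^{-4}\cos 2s\,K_s+2i\rho^{-2}\cos 2s\,K_\theta+\rho^{-2}(2\rho^{-4}-\cos^2 2s)K=0.
\end{equation*}
Since all coefficients depend only on $s$, expanding $K(s,\theta)=\sum_{n\in\mathbb Z}K_n(s)e^{in\theta}$ reduces the problem to classifying bounded solutions of a countable family of second-order complex ODEs $\mathcal L_n K_n=0$ on $(0,\pi/2)$, with $N$ bounded iff each $K_n$ is.

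Next I would identify the contributions of the six known Jacobi fields \eqref{Jacobifields} under $K=k_1+ik_2$: one checks directly that $N_1,N_2\leftrightarrow\{\sin s\cdot e^{i\theta},\,-i\sin s\cdot e^{i\theta}\}$ at mode $n=1$; $N_3,N_4\leftrightarrow\{\cos s\cdot e^{-i\theta},\,i\cos s\cdot e^{-i\theta}\}$ at $n=-1$; and $N_5,N_6\leftrightarrow\{(\sin 2s)^{1/2},\,i(\sin 2s)^{1/2}\}$ at $n=0$. For each of $n\in\{-1,0,1\}$ this already furnishes one nonzero complex-linear solution of $\mathcal L_n K_n=0$; the second linearly independent solution is produced by reduction of order, and an asymptotic analysis at $s\to 0^+$ and $s\to(\pi/2)^-$ (where $\Gamma$ is planar and, in the radial coordinate $r\sim(\sin 2s)^{-1/2}$, the ODE degenerates to an Euler equation $r^2K''+rK'-(n\pm 1)^2K=0$) shows that this second solution is unbounded at at least one endpoint. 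Hence in the three low modes, bounded solutions are exactly the real span of $N_1,\ldots,N_6$.

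The main obstacle is ruling out nontrivial bounded $K_n$ for $|n|\ge 2$. A clean route is to combine the stability of $\Gamma$ with the positivity of the mode-$0$ Jacobi field $(\sin 2s)^{1/2}$: the quadratic form $-\langle N,L_\Gamma N\rangle$ splits under Fourier into sectors $\sum_n Q_n(K_n)$, and an Allegretto-Piepenbrink/Hardy-type substitution against this positive Jacobi field upgrades the bare nonnegativity of $Q_0$ (from stability) to a strictly positive lower bound on every higher sector, with a gap scaling like $n^2-1$ in a suitably weighted $L^2$-norm; this precludes nontrivial bounded $K_n$ for $|n|\ge 2$. Equivalently, the Euler-type end analysis forces a decaying branch at each end, and the exact matching of the two decaying branches is a codimension-two condition in the two-dimensional complex solution space of $\mathcal L_n K_n=0$; the spectrum estimates of Arezzo-Pacard \cite{ArePac} ensure this matching is impossible for $|n|\ge 2$. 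Assembling the three cases yields the claim.
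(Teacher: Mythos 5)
The paper does not prove Lemma~\ref{Jacobinondeg} itself; it simply cites \cite{LMWW} (``It is shown in \cite{LMWW} that the submanifold $\Gamma$ is non-degenerate''), so a line-by-line comparison with ``the paper's own proof'' is not possible. That said, your Fourier-mode strategy is the standard one for this kind of statement and is in the spirit of the Arezzo--Pacard analysis that underlies \cite{LMWW}, so let me evaluate it on its own merits.

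The structural skeleton is sound and most of the details check out. The complexification is correct: with $K=k_1+ik_2$, the cross-terms $-2\rho^{-2}\cos 2s\,k_{2\theta}$ and $+2\rho^{-2}\cos 2s\,k_{1\theta}$ combine into $2i\rho^{-2}\cos 2s\,K_\theta$, giving exactly the scalar PDE you display. Since the coefficients depend only on $s$, the mode decomposition $K=\sum_n K_n(s)e^{in\theta}$ is legitimate, and boundedness of $N$ forces each $K_n$ to be bounded. Your identification of the six Jacobi fields with modes $n=\pm1,0$ is correct (I verified $N_1\leftrightarrow\sin s\,e^{i\theta}$, $N_2\leftrightarrow-i\sin s\,e^{i\theta}$, $N_3\leftrightarrow\cos s\,e^{-i\theta}$, $N_4\leftrightarrow i\cos s\,e^{-i\theta}$, $N_5\leftrightarrow(\sin 2s)^{1/2}$, $N_6\leftrightarrow i(\sin 2s)^{1/2}$). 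For $n\in\{-1,0,1\}$, the reduction-of-order/Wronskian argument does show the second complex solution blows up at one (in fact both) endpoints -- e.g.\ at $n=1$, $W=C/\sin 2s$ so the second solution is $\sin s\int^s dt/(\sin^2 t\,\sin 2t)$, which behaves like $s^{-1}$ near $0$ and $\log(\pi/2-s)$ near $\pi/2$. Your Euler-equation indicial analysis is also correct: after dividing by $\sin 2s$ the ODE for $K_n$ reads $[\sin 2s\,K_n']'+\big[\tfrac{2(\sin 2s)^2-\cos^2 2s-n^2-2n\cos 2s}{\sin 2s}\big]K_n=0$, with indicial exponents $\pm\tfrac{n+1}{2}$ at $s=0$ and $\pm\tfrac{n-1}{2}$ at $s=\pi/2$.

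For $|n|\ge 2$, your Allegretto--Piepenbrink route does work, and in fact works cleanly: writing $\mathcal L_n=\mathcal L_0-(n^2+2n\cos 2s)/\sin 2s$ (with the $\sin 2s$ weight absorbed), the ground-state substitution $K_n=\phi_0 v$ with $\phi_0=(\sin 2s)^{1/2}$ turns $\mathcal L_n K_n=0$ into $[(\sin 2s)^2 v']'=(n^2+2n\cos 2s)v$. Multiplying by $v$ and integrating yields $-\int_0^{\pi/2}(\sin 2s)^2(v')^2\,ds=\int_0^{\pi/2}(n^2+2n\cos 2s)v^2\,ds$; the indicial exponents show the boundary terms vanish for bounded $K_n$ with $|n|\ge 2$, and since $n^2+2n\cos 2s\ge|n|(|n|-2)\ge 0$ with equality only at a single endpoint, both sides must vanish, forcing $v\equiv 0$. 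Two small imprecisions here: the coercivity constant is $|n|(|n|-2)$, not $n^2-1$ (so at $|n|=2$ it degenerates pointwise at one endpoint and you must also use the $v'\equiv 0$ information, which is fine); and the mechanism is really the existence of the positive zero-mode $\phi_0>0$ rather than ``stability'' per se.

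The one genuine weak spot is your alternative argument -- ``the exact matching of the two decaying branches is a codimension-two condition \dots the spectrum estimates of Arezzo--Pacard ensure this matching is impossible.'' As written this is only a genericity/dimension count and does not rule out a coincidence at a specific $n$ (indeed, exactly such a coincidence \emph{does} occur at $n=0,\pm1$), so that sentence should be dropped in favor of the Picone/ground-state argument, which you already have.
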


We also define the translated coordinate $t_1=a-\epsilon f_1, t_2=b-\epsilon f_2$ and the corresponding perturbed polar coordinate $(\tilde r, \tilde \phi)$, where $f_1(s,\theta)$ and $f_2(s,\theta)$ are perturbations given in Theorem 8.1 of \cite{LMWW} with a different notation $f_1(s,\theta)$ and $f_2(s,\theta)$. Moreover, for $j=1,2$, $f_j$  satisfy
\begin{equation*}
\|\nabla_\Gamma^2 f_j\|_{4,p}+\|\nabla_\Gamma f_j\|_{3,p}+\|f_j\|_{2,p}\leq C.
\end{equation*}
for some $p>1$. The norms will be introduced in \eqref{pbetanorm}.  

With the notation above, the connection Laplacian $\Delta_{\Gamma_{\epsilon}}^{\nu}$ and
\begin{equation*}
    \Delta_A\xi=\Delta\xi+id^* A\xi-2i\langle A,d\xi\rangle-|A|^2\xi
\end{equation*}
can be computed.

\begin{lemma}\label{Laplace1Expansion}
For a smooth normal vector field $N= k_1(s,\theta)m+k_2(s,\theta)n$ and smooth functions $\eta(s,\theta)$ on $\Gamma$ and $\xi(\ts,\tth,t_1,t_2)$ on $\Sigma_{\epsilon}$, we have
\begin{equation*}
\begin{aligned}
\Delta_{\Gamma}^{\nu}N
=&[\rho^{-6} k_{1ss}+\rho^{-2} k_{1\theta\theta}+2\rho^{-4}\cos 2s k_{1s}-2\rho^{-2}\cos 2sk_{2\theta}-\rho^{-2}\cos^2 2s k_1]\m\\
+&[\rho^{-6} k_{2ss}+\rho^{-2} k_{2\theta\theta}+2\rho^{-4}\cos 2s k_{2s}+2\rho^{-2}\cos 2sk_{1\theta}-\rho^{-2}\cos^2 2s k_2]\n,    
\end{aligned}
\end{equation*}

\begin{equation*}
\Delta_{\Gamma}\eta
=\rho^{-6} \eta_{ss}+\rho^{-2} \eta_{\theta\theta}+2\rho^{-4}\cos 2s \eta_{s}   
\end{equation*}

and
\begin{equation*}
\begin{aligned}
    \Delta\xi&=\xi_{t_1t_1}+\xi_{t_2t_2}+\Delta_{\Gamma_{\epsilon}}\xi+2\epsilon(t_2\xi_{\tth t_1}-t_1\xi_{\tth t_2})\rho^{-2}\cos (2\epsilon\ts)(1-2t_1\rho^{-3}\epsilon)\\
    &+\epsilon^2(t_2^2\xi_{t_1t_1}-2t_1t_2\xi_{t_1t_2}+t_1^2\xi_{t_2t_2})\rho^{-2}\cos^2 (2\epsilon\ts)-\epsilon^2\rho^{-2}(\rho^{-4}+1)(t_1\xi_{t_1}+t_2\xi_{t_2})\\
    &+2\epsilon^2 \rho^{-2}\cos (2\epsilon\ts) (f_2\xi_{t_1}-f_1\xi_{t_2})+4t_2\epsilon^2\rho^{-7}\cos2\epsilon\ts(t_2\xi_{\ts t_1}-t_1\xi_{\ts t_2})-2\epsilon^2\rho^{-6}(f_{1s}\xi_{\ts t_1}+f_{2s}\xi_{\ts t_2})\\
    &-2\epsilon^2\rho^{-2}(f_{1\theta}\xi_{\tth t_1}+f_{2\theta}\xi_{\tth t_2})+(2(t_1+\epsilon f_1)\rho^{-9}+3\tilde r^2\rho^{-12}\epsilon
    )\epsilon\xi_{\ts\ts}+4(t_2+\epsilon f_2)\epsilon\rho^{-7}\xi_{\ts\tth}\\
    &+(-2(t_2+\epsilon f_2)\rho^{-5}+3\tilde r^2\epsilon \rho^{-8})\epsilon\xi_{\tth\tth}+8t_1\epsilon^2\rho^{-7}\cos^2 (2\epsilon\ts) \xi_{\ts}+4t_2\epsilon^2\rho^{-5}\cos (2\epsilon\ts) \xi_{\tth}+O(\epsilon^3),
\end{aligned}
\end{equation*}

\begin{equation}\label{Laplace2Expansion}
\begin{aligned}
d^*A\xi
&=-\frac{1}{\sqrt{G}}\partial_j(\sqrt{G}g^{ij}A_j)
\xi+d^*B_0\xi+O(\epsilon^3)=d^*B_0\xi+O(\epsilon^3),
\end{aligned}
\end{equation}

\begin{equation}\label{Laplace3Expansion}
\begin{aligned}
\langle A,d\xi\rangle
&=g^{ij}A_i\xi_j+\langle B_0,d\xi\rangle+O(\epsilon^3)\\
&=A_3\xi_a+A_4\xi_b-\epsilon^2\rho^{-2}\cos^2 2s (b\xi_a-a\xi_b)-\epsilon^2 \rho^{-2} \cos 2s \xi_\theta\\
&-\epsilon^2\rho^{-6}(2t_2\rho^{-1}\cos (2\epsilon\ts)+A_3 f_{1s}+A_4 f_{2s})\xi_{\ts}+\epsilon^2\rho^{-2}(2t_1\rho^{-1}\cos (2\epsilon\ts)-A_3 f_{1s}-A_4 f_{2s})\xi_{\tth}\\
&+\langle B_0,d\xi\rangle+O(\epsilon^3),
\end{aligned}
\end{equation}

\begin{equation}\label{Laplace4Expansion}
\begin{aligned}
|A|^2\xi
&=g^{ij}A_iA_j\xi+(|A_0+B_0|^2-|A_0|^2)\xi+O(\epsilon^3)\\
&=(A_3^2+A_4^2)\xi+\epsilon^2\rho^{-2}\cos^2 2s \xi+(|A_0+B_0|^2-|A_0|^2)\xi+O(\epsilon^3).
\end{aligned}
\end{equation}
\end{lemma}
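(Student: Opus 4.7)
The proof is essentially a direct computation of the various differential operators in the explicit Fermi coordinates $(\tilde s,\tilde\theta,a,b)$ (or the shifted coordinates $(\tilde s,\tilde\theta,t_1,t_2)$), using the metric $(g_{ij})$ and its inverse $(g^{ij})$ recorded just before the lemma together with the standard intrinsic formulas $\Delta_g\xi=\frac{1}{\sqrt G}\partial_i(\sqrt G\,g^{ij}\partial_j\xi)$ and $d^*C=-\frac{1}{\sqrt G}\partial_i(\sqrt G\,g^{ij}C_j)$. The plan is to establish the five identities in the order listed, keeping track of the powers of $\epsilon$ systematically so that all remaining contributions can be truncated at $O(\epsilon^3)$ at the end.

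For $\Delta_\Gamma^\nu N$ I would simply subtract the extrinsic curvature contribution $2\rho^{-6}N$ from the explicit form of $L_\Gamma N=\Delta_\Gamma^\nu N+2\rho^{-6}N$ already recalled from \cite{ArePac} a few lines above the lemma; the holomorphic-curve structure ensures the normal bundle splits nicely and no additional curvature coupling arises beyond the $\cos 2s$ terms already present. For $\Delta_\Gamma\eta$, the induced metric on $\Gamma$ is $\mathrm{diag}(\rho^6,\rho^2)$ with $\sqrt{G_\Gamma}=\rho^4$, and after observing that $\rho_s=-\rho^3\cos 2s$ the claimed expression follows by one line of calculation, the drift $2\rho^{-4}\cos 2s\,\eta_s$ being produced by $\partial_s(\rho^{-2})=2\cos 2s\cdot\rho^{-2}$.

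The main work is the expansion of $\Delta\xi$. I would start from $\Delta\xi=\tfrac{1}{\sqrt G}\partial_i(\sqrt G g^{ij}\partial_j\xi)$ in the $(\tilde s,\tilde\theta,a,b)$-variables, substitute the Taylor expansion of $(g^{ij})$ to order $\epsilon^2$, carry out the derivatives, and then regroup the terms by their $\epsilon$-order. The zeroth-order piece gives the flat normal Laplacian $\xi_{aa}+\xi_{bb}$ together with the pullback of $\Delta_{\Gamma_\epsilon}$; the first-order contribution produces the rotational mixing term $2\epsilon(t_2\xi_{\tilde\theta t_1}-t_1\xi_{\tilde\theta t_2})\rho^{-2}\cos(2\epsilon\tilde s)$ coming from the off-diagonal block of $(g^{ij})$; the $\epsilon^2$-order yields the remaining lines. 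After completing the expansion in $(a,b)$-coordinates, I would apply the chain rule for the shift $t_1=a-\epsilon f_1$, $t_2=b-\epsilon f_2$, which is responsible for the terms involving $f_{1s}, f_{2s}, f_{1\theta},f_{2\theta}$ as well as the $\rho^{-2}\cos 2\epsilon\tilde s(f_2\xi_{t_1}-f_1\xi_{t_2})$ piece. All remainders are absorbed into $O(\epsilon^3)$ using the uniform bounds on $f_j$ recalled from \cite{LMWW}.

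Identities \eqref{Laplace2Expansion}, \eqref{Laplace3Expansion} and \eqref{Laplace4Expansion} for $d^*A\xi$, $\langle A,d\xi\rangle$ and $|A|^2\xi$ follow by the same strategy applied to the formulas $d^*C=-\tfrac{1}{\sqrt G}\partial_j(\sqrt G g^{ij}C_i)$, $g^{ij}C_i D_j$ and $g^{ij}A_iA_j$, after decomposing the connection $A=A_0+B_0$ into the planar vortex piece and the correction $B_0$ and again using the expansion of $g^{ij}$ to order $\epsilon^2$. The principal obstacle is not conceptual but bookkeeping: to keep the second-order $\epsilon$-expansion manageable I would exploit the block structure of $(g^{ij})$, splitting the computation into (i) the tangential block, (ii) the normal block, and (iii) the off-diagonal coupling generated by the second fundamental form, and only perform the $(a,b)\to(t_1,t_2)$ chain rule at the very end so that the cancellations arising from the explicit form of $\tilde A$ are transparent and the $O(\epsilon^3)$ remainder can be read off uniformly on $\Sigma_\epsilon$.
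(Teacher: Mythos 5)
Your proposal is essentially the same direct computation the lemma implicitly records: the paper states these identities without proof, and verifying them in the explicit Fermi chart by substituting the stated $(g^{ij})$ expansion into $\Delta_g\xi=\tfrac{1}{\sqrt G}\partial_i(\sqrt G\,g^{ij}\partial_j\xi)$ and $d^*C=-\tfrac{1}{\sqrt G}\partial_j(\sqrt G\,g^{ij}C_i)$, decomposing $A=A_0+B_0$, and applying the chain rule for $t_j=a_j-\epsilon f_j$ is exactly the intended route. One small algebraic slip: from your (correct) $\rho_s=-\rho^3\cos 2s$ one gets $\partial_s(\rho^{-2})=-2\rho^{-3}\rho_s=2\cos 2s$ (not $2\cos 2s\cdot\rho^{-2}$), which then gives the drift coefficient $\rho^{-4}\cdot 2\cos 2s$ as claimed; as you wrote it, the exponent would come out as $\rho^{-6}$.
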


The following formulas can be found in \cite{BM1,BM2,BM3}.

\begin{lemma}\label{d*d}
For a smooth one-form $\omega=\omega_i dy^i$, we have
\begin{equation*}
d^*\omega=-\frac{1}{\sqrt{G}}\partial_j(\sqrt{G}g^{ij}\omega_i)
\end{equation*}
and
\begin{equation*}
d^*d\omega=-\frac{1}{\sqrt{G}}g_{ml}\partial_j(\sqrt{G}g^{ij}g^{kl}\omega_{ik})dy^m,
\end{equation*}
where $\omega_{ik}=\partial_i\omega_k-\partial_k\omega_i$.
\end{lemma}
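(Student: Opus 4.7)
The plan is to verify both identities by explicit local-coordinate computations using the characterization of $d^*$ as the formal $L^2$-adjoint of $d$ with respect to the Riemannian volume form $dV_g=\sqrt{G}\,dy^1\cdots dy^n$, where the inner products on $k$-forms are induced componentwise by $g^{ij}$. The formulas are essentially standard Hodge theory, so the proof is really an exercise in bookkeeping with indices and the metric.

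For the first identity, I would take a compactly supported smooth test function $f$ and evaluate
\begin{equation*}
\int \langle df,\omega\rangle\,\sqrt{G}\,dy = \int g^{ij}(\partial_i f)\omega_j\,\sqrt{G}\,dy.
\end{equation*}
Integrating by parts moves the derivative onto $\sqrt{G}\,g^{ij}\omega_j$ and gives
\begin{equation*}
\int \langle df,\omega\rangle\,\sqrt{G}\,dy = -\int f\cdot \frac{1}{\sqrt{G}}\partial_i\bigl(\sqrt{G}\,g^{ij}\omega_j\bigr)\,\sqrt{G}\,dy.
\end{equation*}
Comparing this with the defining relation $\int f\cdot d^*\omega\,\sqrt{G}\,dy = \int \langle df,\omega\rangle\,\sqrt{G}\,dy$ and relabeling the dummy indices $i\leftrightarrow j$ yields the stated formula for $d^*\omega$.

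For the second identity, I would first expand $d\omega = \sum_{i<k}\omega_{ik}\,dy^i\wedge dy^k$ with $\omega_{ik}=\partial_i\omega_k-\partial_k\omega_i$, and then compute $d^*$ of this 2-form by pairing against a compactly supported test 1-form $\mu = \mu_m\,dy^m$. Using the induced inner product $\langle dy^i\wedge dy^k,\,dy^p\wedge dy^q\rangle = g^{ip}g^{kq}-g^{iq}g^{kp}$, I expand $\int \langle d\mu,d\omega\rangle\,\sqrt{G}\,dy$, exploit the antisymmetry of $\omega_{ik}$ to restore the sum over all $(i,k)$ (absorbing the factor of $2$), and integrate by parts once to transfer the derivative $\partial_p\mu_q$ onto $\sqrt{G}\,g^{ij}g^{kl}\omega_{ik}$. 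Reading off the coefficient of $\mu_l$ in the resulting expression and converting the vector-valued output into a 1-form via lowering the free index with $g_{ml}$ produces the stated formula for $d^*d\omega$. The only real (purely clerical) obstacle is keeping track of the antisymmetrization factors and ensuring that $\sqrt{G}$ remains inside the divergence so that the formula is manifestly coordinate-invariant; no analytic issue arises, since everything is a local algebraic identity once the adjoint has been characterized.
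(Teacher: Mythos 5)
The paper itself does not prove this lemma; it simply cites Badran--Del Pino for these formulas, so there is no in-paper argument to compare against. Your derivation via the formal $L^2$-adjoint characterization of $d^*$ against compactly supported test functions and test $1$-forms, with a single integration by parts in each case, is the standard route, and the index bookkeeping (relabeling dummy indices, using the antisymmetry of $\omega_{ik}$ to absorb the combinatorial factor in the inner product of $2$-forms, and lowering the free index with $g_{ml}$) goes through correctly to reproduce exactly the two stated identities.
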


\subsection{Vortex solutions}

In the two-dimensional plane, we have fundamental vortex solutions for each fixed integer $j$, called the topological degree of the solution. In the polar coordinate $(r,\phi)$, the $j$-vortex solutions $u_j=(\psi^{(j)},A^{(j)})$ in $\R^2$ has the form
\begin{equation*}
    \psi^{(j)}(x)=U_j(r)e^{\sqrt{-1}j\phi}\text{ and }A^{(j)}(x)=V_j(r)d(j\phi),
\end{equation*}
 where $U_j,V_j$ satisfy the following ODE system
\begin{equation}\label{ODEsystem}
\begin{cases}
    -U_j''-\frac1r U_j'+j^2\frac{(1-V_j)^2}{r^2}U_j-\frac\lambda 2 (1-U_j^2)U_j=0,\\
    -V_j''+\frac1r V_j'-U_j^2(1-V_j)=0.
\end{cases}
\end{equation}
Moreover, they have the following properties:
\begin{itemize}
    \item $0<U_j, V_j<1$ on $(0,+\infty)$.
    \item $U_j^\prime, V_j^\prime>0$.
    \item $U_j\sim c_1r$, $V_j\sim c_2r^2$ as $r\to 0$ for some constants $c_1,c_2>0$.
    \item $1-U_j, 1-V_j\to 0$ as $r\to \infty$ with an exponential rate of decay. In particular, $1-U_1=O(e^{-m_\lambda r})$, $1-V_1=O(e^{-r})$ as $r\to \infty$, where $m_\lambda=\min\{\sqrt\lambda, 2\}$.
\end{itemize}

The existence and above properties  of these functions are proved in \cite{BerChen,Plo}. From results of \cite{GS}, we know that for $|j|=1$, the vortex solution is always stable for any positive $\lambda$; while for $|j|>1$, it is stable when $\lambda\leq 1$ and unstable for $\lambda>1$. Since we are interested in stable solutions, here in this paper we will only use solutions with $|j|=1$. To simplify the notations, we set $U=U_1, V=V_1$, although they are actually also depending on the coupling parameter $\lambda$.

Roughly speaking, the family of vortex solutions $(\psi_\epsilon, A_\epsilon)$ constructed in \cite{LMWW} can be written into the form:
\begin{equation*}
    \psi_\epsilon=\psi_{0\epsilon}(s,\theta,a,b)+\eta_{0\epsilon}(s,\theta,a,b), A_\epsilon=A_{0\epsilon}(s,\theta,a,b)+B_{0\epsilon}(s,\theta,a,b),
\end{equation*}
where $(\psi_{0\epsilon},A_{0\epsilon})$ is suitable  approximate solution, and  $(\eta_{0\epsilon}, B_{0\epsilon})$ is small perturbation. For simplicity, we will omit the subscript $\epsilon$ later on. More precisely, $(\psi_0,A_0)$ can be written as
\begin{equation*}
\begin{aligned}
    &\psi_0(s,\theta,a,b)=\psi^{(1)}(a-\epsilon f_1,b-\epsilon f_2)=U(\tilde r)e^{i\tilde\phi},\\
    &A_0(s,\theta,a,b)=A_2(s,\theta,a,b)d\theta+A_3(t_1,t_2) dt_1+ A_4(t_1,t_2) dt_2\\
    =&(-\epsilon f_{1s} A_3-\epsilon f_{2s} A_4)ds+(A_2(s,\theta,a,b)-\epsilon f_{1\theta} A_3-\epsilon f_{2\theta} A_4)d\theta+A_3da+A_4db\\
    =&(\epsilon f_{1s}\frac{V}{\tilde r}\sin \tilde\phi-\epsilon f_{2s}\frac{V}{\tilde r}\cos\tilde\phi)ds+[-(1-V)\cos 2s+\epsilon f_{1\theta}\frac{V}{\tilde r}\sin \tilde\phi-\epsilon f_{2\theta}\frac{V}{\tilde r}\cos\tilde\phi]d\theta-\frac{V}{\tilde r}\sin\tilde\phi da+\frac{V}{\tilde r}\cos\tilde\phi db.
\end{aligned}
\end{equation*}
inside $\Sigma_{\epsilon}$, where $A_2(s,\theta,a,b)=-(1-V)\cos 2s$, $A_3(t_1,t_2)=-\frac{V}{\tilde r}\sin\tilde\phi$ and $A_4(t_1,t_2)=\frac{V}{\tilde r}\cos\tilde\phi$. Outside $\Sigma_{\epsilon}$, $(\psi_0, A_0)=(We^{i\varphi}, Zd\varphi)$, where $W,Z=1+O(e^{-\delta r})$ and $\varphi$ is an extension of the angular function $\phi$. 

\begin{remark}
    We remark here that our approximate solution is slightly different from that of \cite{LMWW}. We add $(1-V)d\theta$ so that $B_0$ is of $O(\epsilon^2)$. The existence of such a solution is ensured with similar arguments as that of \cite{LMWW}.
\end{remark}

To analyze the stability of such a solution, we need to study the linearized operator at $(\psi,A)$, defined by
\begin{equation*}
L(\xi,B,\psi,A):=
\begin{pmatrix}
-\Delta_A \xi-id^*B\psi +2i\langle B,\nabla_A\psi\rangle+\frac{\lambda}{2}(\psi^2\overline{\xi}+2|\psi|^2\xi-\xi)\\
d^*d B+Im(\overline{\nabla_A\psi}\cdot \xi+\overline{\nabla_A\xi}\cdot \psi)+B|\psi|^2
    \end{pmatrix}.
\end{equation*}

For any $\gamma\in C^1(\R^4)$, we define the Gauge transformations
\begin{equation*}
G_{\gamma}(\xi,B):=(e^{i\gamma}\xi, B+d\gamma)\text{ and }\tilde G_{\gamma}(\xi,B):=(e^{i\gamma}\xi, B).
\end{equation*}

\begin{definition}
$L(\cdot; \psi, A)$ is called to be stable if for any $v=(\xi, B)$(integrable in suitable sense),
\begin{equation*}
\delta^2 \mathcal E[v,v]=\int_{\R^4} \langle L(v;\psi, A), v\rangle \geq 0.
\end{equation*}
\end{definition}

$L(\cdot;\psi,A)$ is invariant under the following Gauge transformation
\begin{equation*}
L(\tilde G_{\gamma}(\xi,B);G_{\gamma}(\psi,A))=\tilde G_{\gamma}(L(\xi,B)).
\end{equation*}
Consequently, $L(\cdot;\psi, A)$ admits an infinite-dimensional subspace of bounded kernels
\begin{equation*}
  (i\gamma \psi,d\gamma)\text{ for any } \gamma\in C^1(\R^4).
\end{equation*}
Also, $L(\cdot;\psi, A)$ admits several bounded kernels as following:
\begin{equation*}
\begin{aligned}
\tilde Z_j:=(\partial_j \psi, \partial_j A),\ j=1,2,3,4,\ \tilde Z_5:=(x\cdot \nabla\psi,x\cdot \nabla A+A),\ \tilde Z_6:=(Jx\cdot \nabla \psi, Jx\cdot \nabla A+JA),      
\end{aligned}   
\end{equation*}
where $\nabla A:=\nabla A^j dx_j$ for $A=A^j dx_j$. Here $\tilde Z_j$, $j=1,2,3,4$ are generated by translations in $x_j$-direction. $\tilde Z_5$ is generated by dilation
\begin{equation*}
(\psi(x), A^j(x)d x_j)\mapsto (\psi(tx), tA^{j}(tx)dx_j).
\end{equation*}
Finally, $\tilde Z_6$ is generated by the an action in $O(4)/SU(2)$
\begin{equation*}
(\psi(x), A^j(x)d x_j)\mapsto (\psi(Jx), A^{j}(Jx)d((Jx)_j)).
\end{equation*}

Even though these $\tilde Z_j$'s are not in $L^2(\R^4)$, we can use the gauge kernels $(i\gamma \psi, d\gamma)$ to modify them so that they decay exponentially. Specifically, the modified kernels are
\begin{equation}\label{modifiedkernel}
\begin{aligned}
Z_j:=\tilde Z_j-(iA^j\psi, dA^j)=(\partial_j\psi-iA^j\psi, \partial_jA-dA^j),\ j=1,2,3,4,\\
Z_5:=\tilde Z_5-(i(x\cdot A)\psi, d(x\cdot A))=(x\cdot \nabla_A\psi, x_j(\partial_j A-dA^j)),\\
Z_6:=\tilde Z_6-(i(Jx\cdot A)\psi, d(Jx\cdot A))=(Jx\cdot \nabla_{A}\psi, (Jx)_j(\partial_j-dA^j)).
\end{aligned}
\end{equation}

We define $Z_{sym}$ to be the space  spanned by the $Z_j$ (or $\tilde Z_j$), $j=1,\dots, 6$ and the gauge kernels. We will show in Theorem \ref{main} that $L(\cdot;\psi,A)$ is non-degenerate in the sense that $Z_{sym}$ is actually the space of all bounded kernels of $L(\cdot;\psi,A)$. However, it is inconvenient to deal with the infinite-dimensional gauge kernels. Also, the operator $L(\cdot; \psi, A)$ is not uniformly elliptic. To solve these problems, we restrict the perturbation $v$ into the space that is orthogonal to all the gauge kernels. To see this, we notice that 
\begin{equation*}
    \int_{\R^4}\langle v, (i\gamma\psi, d\psi)\rangle=0\text{ for any }\gamma \in C^1(\R^4)
\end{equation*}
is equivalent to the gauge condition
\begin{equation*}
\Im (\overline{\psi}\xi)=d^*B.
\end{equation*}
Hence, we introduce the modified quadratic form
\begin{equation*}
\int_{\R^4}\langle \mathbb L(v;\psi,A), v\rangle:=\int_{\R^4}\langle  L(v;\psi,A), v\rangle+\int_{\R^4}(\Im (\overline{\psi}\xi)-d^*B)^2
\end{equation*}
and the (Gauge-fixed) linearized operator at $(\psi,A)$ as follows:
\begin{equation*}
    \mathbb L(\xi,B;\psi,A):=
    \begin{pmatrix}
        -\Delta_A \xi +2i\langle B,\nabla_A\psi\rangle+\frac12 (\lambda-1)\psi^2\bar\xi+(\lambda+\frac12)|\psi|^2\xi-\frac\lambda 2\xi\\
        \Delta_H B+2Im(\overline{\nabla_A\psi}\cdot \xi)+B|\psi|^2
    \end{pmatrix},
\end{equation*}
where $\Delta_H=d^*d+dd^*$ is the Hodge Laplacian. Now $\mathbb L(\cdot, \psi, A)$ is uniformly elliptic and the stability of $L(v;\psi, A)$ is established if we show that there exists a constant $c>0$ such that
\begin{equation*}
\int_{\R^4}\langle \mathbb L(v;\psi,A), v\rangle \geq 0, \text{ for any }v\in L^2(\R^4).
\end{equation*}
Furthermore, the non-degeneracy of $L(\cdot;\psi,A)$ is reduced to the non-degeneracy of $L(\cdot;\psi,A)$ in the sense that any bouned kernels of $\mathbb L(\cdot;\psi,A)$ is a linear combination of $Z_j$, $j=1,\dots,6$.

The following two orthogonal approximate kernels $\mathcal T_i:=(T_i,T_{B_i})$ of $\mathbb L(\xi,B;\psi,A)$ play a crucial role in the later discussions.
\begin{equation*}
\begin{aligned}
T_1(t_1,t_2)&=\partial_{t_1}\psi_0 -iA_3\psi_0=(U'\cos\tilde\phi-i\frac{U}{\tilde r}(1-V)\sin\tilde\phi)e^{i\tilde\phi},\\ T_2(t_1,t_2)&=\partial_{t_2}\psi_0-iA_4\psi_0=(U'\sin\tilde\phi+i\frac{U}{\tilde r}(1-V)\cos\tilde\phi)e^{i\tilde\phi},\\
T_{B_1}(t_1,t_2)&=\partial_{t_1} A_0-dA_3=\partial_{t_1} A_0 d\theta-(\partial_{t_2}A_3-\partial_{t_1}A_4)dt_2=\epsilon\cos 2s \cos \tilde\phi V' d\tth +\frac{V'}{\tilde r}dt_2,\\ 
T_{B_2}(t_1,t_2)&=\partial_{t_2} A_0-dA_4=\partial_{t_2} A_0 d\theta+(\partial_{t_2}A_3-\partial_{t_1}A_4)dt_1=\epsilon\cos 2s \sin \tilde\phi V' d\tth -  \frac{V'}{\tilde r}dt_1.
\end{aligned}
\end{equation*}
More precisely, they are actually the kernels of $\mathbb L(\xi, B;\psi_0, A_0)$, the linearized operator at $(\psi_0,A_0)$. Their norms can be computed:
\begin{lemma}We have the following identities:
\begin{enumerate}
\item $$\int_{\R^2}|T_1|^2=\int_{\R^2}|T_2|^2=\pi\int_{0}^{+\infty}[\tilde r(U')^2+\frac{1}{\tilde r} U^2(1-V)^2]d\tilde r,$$
\item $$\Re\int_{\R^2}\overline{T_1}T_2=\Re\int_{\R^2}\overline{T_2}T_1=0,\ \Im\int_{\R^2}\overline{T_1}T_2=2\pi\int_{0}^{+\infty}UU'(1-V)d\tilde r,$$
\item $$\int_{\R^2}|T_{B_1}|^2=\int_{\R^2}|T_{B_2}|^2=2\pi\int_{0}^{+\infty}\frac{(V')^2}{\tilde r}d\tilde r+O(\epsilon^3),\ \int_{\R^2}\langle T_{B_1}, T_{B_2}\rangle=O(\epsilon^3).$$
\end{enumerate}
\end{lemma}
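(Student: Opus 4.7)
The plan is to verify the three identities by direct computation in polar coordinates $(\tilde r,\tilde\phi)$ on the normal fiber $\mathbb R^2$, with the base parameters $(\tilde s,\tilde\theta)$ held fixed and fiber measure $dt_1\,dt_2=\tilde r\,d\tilde r\,d\tilde\phi$. For identity (1), expanding the formula gives
\[
|T_1|^2=(U')^2\cos^2\tilde\phi+\tilde r^{-2}U^2(1-V)^2\sin^2\tilde\phi,
\]
and the analogous expression for $T_2$; applying $\int_0^{2\pi}\cos^2\tilde\phi\,d\tilde\phi=\int_0^{2\pi}\sin^2\tilde\phi\,d\tilde\phi=\pi$ yields the stated formula. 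For identity (2), I would multiply out $\overline{T_1}T_2$: its real part is $\bigl((U')^2-\tilde r^{-2}U^2(1-V)^2\bigr)\cos\tilde\phi\sin\tilde\phi$, which vanishes after the angular integration; its imaginary part simplifies to $UU'(1-V)/\tilde r$ via $\cos^2\tilde\phi+\sin^2\tilde\phi=1$ and integrates to the claimed $2\pi\int_0^\infty UU'(1-V)\,d\tilde r$.

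Identity (3) is the technical heart. The one-form $T_{B_1}$ has an $O(\epsilon)$ coefficient $\alpha:=\epsilon\cos 2s\cos\tilde\phi\,V'$ in $d\tilde\theta$ and coefficient $\beta:=V'/\tilde r$ in $dt_2$, so its norm with respect to the Fermi metric of Section 2.1 is
\[
|T_{B_1}|^2=g^{\tilde\theta\tilde\theta}\alpha^2+2g^{\tilde\theta t_2}\alpha\beta+g^{t_2 t_2}\beta^2+O(\epsilon^3),
\]
where I absorb into the error the $O(\epsilon^2)$ shifts caused by passing from $(a,b)$ to $(t_1,t_2)=(a-\epsilon f_1,b-\epsilon f_2)$. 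Substituting $g^{\tilde\theta\tilde\theta}=\rho^{-2}+O(\epsilon)$, $g^{\tilde\theta t_2}=-a\epsilon\rho^{-2}\cos 2s+O(\epsilon^2)$, and $g^{t_2 t_2}=1+a^2\epsilon^2\rho^{-2}\cos^2 2s+O(\epsilon^3)$ from the expansions recalled in Section 2.1, the net $O(\epsilon^2)$ contribution assembles into the perfect square
\[
\epsilon^2\rho^{-2}\cos^2 2s\,(V')^2\bigl(\cos\tilde\phi-a/\tilde r\bigr)^2.
\]
Because $a=t_1+\epsilon f_1=\tilde r\cos\tilde\phi+\epsilon f_1$, this quantity is actually $\epsilon^4\rho^{-2}\cos^2 2s\,(V')^2 f_1^2/\tilde r^2=O(\epsilon^4)$. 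Hence $|T_{B_1}|^2=(V')^2/\tilde r^2+O(\epsilon^3)$, and polar integration yields $2\pi\int_0^\infty (V')^2/\tilde r\,d\tilde r+O(\epsilon^3)$. The cross product $\int\langle T_{B_1},T_{B_2}\rangle$ is handled analogously: the Euclidean leading pieces $\frac{V'}{\tilde r}dt_2$ and $-\frac{V'}{\tilde r}dt_1$ are orthogonal (since $g^{t_1 t_2}=O(\epsilon^2)$), and the remaining corrections reduce to $O(\epsilon^3)$ by a parallel perfect-square mechanism combined with the vanishing angular integrals of $\cos\tilde\phi\sin\tilde\phi$.

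The main obstacle is detecting the perfect-square cancellation in (3): individually, each of $g^{\tilde\theta\tilde\theta}\alpha^2$, the cross term $2g^{\tilde\theta t_2}\alpha\beta$, and the correction to $g^{t_2 t_2}\beta^2$ contributes at $O(\epsilon^2)$ with a nonzero angular average, and only their sum drops to higher order through the defining relation $\cos\tilde\phi=t_1/\tilde r$ together with $a=t_1+\epsilon f_1$. Careful bookkeeping of which metric entries enter at $O(\epsilon^2)$ versus $O(\epsilon^3)$, and of which $\tilde\phi$-integrals vanish by parity, is the bulk of the work; the rest is a routine substitution.
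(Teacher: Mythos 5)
Your computation is correct. The paper states this lemma without proof, so there is no ``paper's own proof'' to compare against; what you have written is a complete and correct verification.

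Parts (1) and (2) are straightforward: expanding in the polar coordinates $(\tilde r,\tilde\phi)$ with measure $\tilde r\,d\tilde r\,d\tilde\phi$, only the angular integrals $\int_0^{2\pi}\cos^2\tilde\phi=\int_0^{2\pi}\sin^2\tilde\phi=\pi$ and $\int_0^{2\pi}\sin\tilde\phi\cos\tilde\phi=0$ are used, and the real and imaginary parts of $\overline{T_1}T_2$ come out exactly as you state. Part (3) is the substantive point, and your identification of the cancellation mechanism is the right one. Gathering the $O(\epsilon^2)$ contributions from $g^{\tth\tth}\alpha^2$, $2g^{\tth b}\alpha\beta$, and the $O(\epsilon^2)$ correction in $g^{bb}\beta^2$ gives
\[
\epsilon^2\rho^{-2}\cos^2(2s)\,(V')^2\Bigl(\cos^2\tilde\phi-\tfrac{2a}{\tilde r}\cos\tilde\phi+\tfrac{a^2}{\tilde r^2}\Bigr)
=\epsilon^2\rho^{-2}\cos^2(2s)\,(V')^2\Bigl(\cos\tilde\phi-\tfrac{a}{\tilde r}\Bigr)^2,
\]
and since $a=t_1+\epsilon f_1=\tilde r\cos\tilde\phi+\epsilon f_1$ this drops to $\epsilon^4\rho^{-2}\cos^2(2s)\,(V'/\tilde r)^2 f_1^2$, which is $O(\epsilon^4)$ pointwise (note $(V'/\tilde r)^2$ is bounded near $\tilde r=0$ and exponentially small at infinity). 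One could equally well integrate the three contributions term by term and observe that the angular averages $\pi,-2\pi,\pi+2\pi\epsilon^2 f_1^2/\tilde r^2$ sum to $O(\epsilon^2)$; your perfect-square organization is cleaner. For the cross inner product $\langle T_{B_1},T_{B_2}\rangle$, the relevant $O(\epsilon^2)$ contributions all carry a factor $\sin\tilde\phi\cos\tilde\phi$ (after writing $a\approx\tilde r\cos\tilde\phi$, $b\approx\tilde r\sin\tilde\phi$, $ab\approx\tilde r^2\sin\tilde\phi\cos\tilde\phi$), so they vanish under the angular integral; it is the parity of the angular integrals rather than a further perfect square that kills them, but this is a cosmetic point. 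Your bookkeeping of which coordinate-change corrections and which metric entries enter at $O(\epsilon^2)$ versus $O(\epsilon^3)$ is also consistent with the expansion of $(g^{ij})$ given in Section~2.1.
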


\subsection{Eigenvalue problems and apriori estimates}
In this subsection, we consider the eigenvalue problem of the Jacobi operator associated to nonpositive eigenvalues and derive the corresponding apriori estimates. We first introduce some weighted Sobolev norms on $\Gamma$ and $\Gamma_{\epsilon}\times \R^2$:
\begin{gather}\label{pbetanorm}
\|f\|_{p,\beta}:=\sup_{P\in \Gamma}\left(\int_{\Gamma}|f(P)|^p \rho(P)^{\beta} dvol_{\Gamma}\right)^{\frac 1p},\\
\|g\|_{0,\beta,p,\sigma}:=\sup_{(P,a,b)\in \Gamma_{\epsilon}\times \R^2}\rho(P)^{\beta}e^{\sigma r}\|g\|_{L^p(B_1(P,a,b))}\nonumber,\\
\|g\|_{2,\beta,p,\sigma}:=\|D^2g\|_{0,\beta+2,p,\sigma}+\|Dg\|_{0,\beta+1,p,\sigma}+\|g\|_{0,\beta,p,\sigma}\nonumber,\\
\|(\xi, B)\|_{0,\beta,p,\sigma}:=\|\xi\|_{0,\beta,p,\sigma}+\|B\|_{0,\beta,p,\sigma},\ \|(\xi, B)\|_{2,\beta,p,\sigma}:=\|\xi\|_{2,\beta,p,\sigma}+\|B\|_{2,\beta,p,\sigma}\nonumber 
\end{gather}
where $f(P)$ is a function on $\Gamma$ and $g(P,a,b)$ is a function or one-form, $\xi(P,a,b)$ is a function, $B(P,a,b)$ is an one-form on $\Gamma_{\epsilon}\times \R^2$.

The eigenvalue problem of the Jacobi operator $L_\Gamma$ arises from the second variation of the area functional
\begin{equation*}
\mathrm Q(N,N):=\int_{\Gamma}|\nabla_{\Gamma}^{\nu}N|-2\rho^{-6}|N|^2  dvol_{\Gamma},
\end{equation*}
where $N=k_1(s,\theta)\m+k_2(s,\theta)\n$ is a normal vector field on $\Gamma$ and $\nabla_{\Gamma}^{\nu}$ is the covariant derivative on the normal bundle $\mathcal{N}\Gamma$. It can be written explicitly as
\begin{equation*}
\begin{aligned}
\mathrm Q(N,N)=&\int_{\Gamma}\rho^{-6}(|k_{1s}|^2+|k_{2s}|^2)+\rho^{-2}(|k_{1\theta}|^2+|k_{2\theta}|^2)-2\rho^{-2}\cos 2s (k_{1}k_{2\theta}-k_{2}k_{1\theta})\\
&-\rho^{-2}(2\rho^{-4}-\cos^2 2s)(|k_1|^2+|k_2|^2)  dvol_{\Gamma},     
\end{aligned}
\end{equation*}

The related eigenvalue problem is
\begin{equation*}
\Delta_{\Gamma}^{\nu}N+2\rho^{-6}N+\mu \rho^{-6}N=0 \text{ in }\Gamma.
\end{equation*}

We also consider the region
\begin{equation*}
\Gamma^R:=\{(s,\theta)\in \Gamma:\ \rho(s,\theta)<R\}
\end{equation*}
for a large number $R$ and the eigenvalue problem in $\Gamma^R$
\begin{equation}\label{EigenProbR}
\begin{cases}
\Delta_{\Gamma}^{\nu}N+2\rho^{-6}N+\mu \rho^{-6}N=F &\text{ in }\Gamma^R,\\
N=0 &\text{ on }\partial\Gamma^R.
\end{cases}
\end{equation}
Then we have the following apriori estimates
\begin{lemma}\label{JacobiEigenEst}
Let $p>1$, $\sigma>0$. Then for any fixed $R_0>0$ large and $\mu_0>0$, there exists a constant $C>0$ such that for all $R>R_0+1$, $-\mu_0< \mu\leq 0$, normal vector field $F$, if \eqref{EigenProbR} admits a solution $N=k_1 \m+k_2 \n$, then for $i=1,2$, If $\|F\|_{p,\beta+2}<+\infty$, then
 \begin{equation}\label{EigenLinftyest}
\|N\|_{L^\infty}\leq C[\|F\|_{p,\beta+2}+\|N\|_{L^{\infty}(|\rho|<3R_0)}]
\end{equation}
and
\begin{equation}\label{Eigenw2pest}
\|\nabla_\Gamma^2 N\|_{p,\beta+2}+\|\nabla_\Gamma N\|_{p,\beta+1}+\|N\|_{p,\beta}\leq C[\|F\|_{p,\beta+2}+\|N\|_{L^{\infty}(|\rho|<3R_0)}].
\end{equation}
\end{lemma}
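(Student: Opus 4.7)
The plan is to first establish the $L^\infty$ bound \eqref{EigenLinftyest} by a blow-up/contradiction argument built on the non-degeneracy and stability of $\Gamma$ (Lemma \ref{Jacobinondeg} and the stability proposition), and then to deduce the weighted bound \eqref{Eigenw2pest} from \eqref{EigenLinftyest} via a standard covering and interior $L^p$-regularity.

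Suppose \eqref{EigenLinftyest} fails. Then there exist $R_k>R_0+1$, $\mu_k\in(-\mu_0,0]$, sources $F_k$ and solutions $N_k$ of \eqref{EigenProbR} with $\|N_k\|_{L^\infty}=1$ and $\varepsilon_k:=\|F_k\|_{p,\beta+2}+\|N_k\|_{L^\infty(\{\rho<3R_0\})}\to 0$. Passing to subsequences, $\mu_k\to\mu_\infty\in[-\mu_0,0]$; choose $P_k\in\Gamma^{R_k}$ with $|N_k(P_k)|=1$, so $\rho(P_k)\geq 3R_0$ for $k$ large. If $\rho(P_k)$ stays bounded, local elliptic regularity produces a subsequential limit $N_\infty$ on $\Gamma$ which is bounded, solves $L_\Gamma N_\infty+\mu_\infty\rho^{-6}N_\infty=0$, vanishes on $\{\rho<3R_0\}$, and satisfies $|N_\infty(P_\infty)|=1$. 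When $\mu_\infty=0$, Lemma \ref{Jacobinondeg} forces $N_\infty\in\mathrm{span}\{N_1,\dots,N_6\}$; but the fields in \eqref{Jacobifields} are linearly independent on $\{\rho<3R_0\}$, so $N_\infty\equiv 0$, a contradiction. When $\mu_\infty<0$, the existence of such a bounded negative eigenfunction against the weight $\rho^{-6}$ contradicts the non-negativity of $\mathrm Q$ guaranteed by stability of $\Gamma$, via a standard truncation argument bringing the eigenfunction into the form domain of $\mathrm Q$.

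If instead $\rho(P_k)\to\infty$, then $P_k$ eventually lies on one of the two planar ends of $\Gamma$. Rescaling by $r_k:=\rho(P_k)$, the rescaled submanifold $r_k^{-1}\Gamma$ converges smoothly on compact sets to a flat plane, the coefficients of $L_\Gamma$ together with the zeroth-order term $(2+\mu_k)\rho^{-6}$ converge to those of the flat vector Laplacian on $\R^2$, and the rescaled source vanishes in $L^p_{\mathrm{loc}}$ because $\|F_k\|_{p,\beta+2}\to 0$. A subsequential limit $\tilde N_\infty$ is then a bounded harmonic $\R^2$-valued function on $\R^2$ with $|\tilde N_\infty(0)|=1$, hence by Liouville a non-zero constant. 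To rule this out I would replace the $L^\infty$ normalisation by one against $\|\rho^{-\delta}N_k\|_{L^\infty}$ for a small $\delta>0$: the decay of the weight forces the maximum to be attained in a bounded region of $\Gamma$, reducing back to the previous case, in which the strong limit becomes a bounded Jacobi field vanishing on $\{\rho<3R_0\}$ and Lemma \ref{Jacobinondeg} again closes the argument.

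Given \eqref{EigenLinftyest}, the weighted estimate \eqref{Eigenw2pest} follows by covering $\Gamma^R$ with intrinsic unit balls, applying interior $L^p$ Calder\'on--Zygmund estimates for $L_\Gamma+\mu\rho^{-6}$ on each such ball (on which $\rho$ and the coefficients from Lemma \ref{Laplace1Expansion} are effectively constant), multiplying by the appropriate power of $\rho$, and summing; the remaining $\|N\|_{L^\infty}$ on the right-hand side is then absorbed using \eqref{EigenLinftyest}. The main obstacle is the escape case in the contradiction step: the naive blow-down limit is a priori a non-zero constant harmonic function on $\R^2$, and ruling it out requires either the weighted renormalisation sketched above or a simultaneous blow-down on both planar ends of $\Gamma$ promoting $\tilde N_\infty$ to a genuine bounded Jacobi field on $\Gamma$, at which point the decay at infinity of each $N_j$ (visible from the $\sin s$, $\cos s$, $(\sin 2s)^{1/2}$ factors in \eqref{Jacobifields}) combined with non-degeneracy finishes the argument.
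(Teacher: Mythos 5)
Your overall strategy --- normalize, blow up at a peak point, split into the bounded-peak and escape cases, use stability and non-degeneracy of $\Gamma$ for the first, and then recover the weighted $W^{2,p}$ estimate from the $L^\infty$ bound by interior Calder\'on--Zygmund on a cover --- is the same scheme the paper invokes by citing Lemma~10.2 of~\cite{PW} and Proposition~8.2 of~\cite{LMWW}. The bounded-peak case, including the truncation needed to place the limiting bounded eigenfunction in the form domain of $\mathrm Q$ when $\mu_\infty<0$, and the passage from \eqref{EigenLinftyest} to \eqref{Eigenw2pest}, are fine.

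The genuine gap is exactly the one you flag, and neither of your proposed repairs actually closes it. In the escape case the translated limit is a bounded, nonzero constant vector field on $\R^2$; the constraint $\|N_k\|_{L^\infty(\{\rho<3R_0\})}\to 0$ disappears in the blow-down because that region collapses to a point (a removable singularity for a bounded harmonic limit), and if $R_k-\rho(P_k)\to\infty$ the Dirichlet boundary also escapes, so nothing kills the constant. Your first fix, normalizing by $\|\rho^{-\delta}N_k\|_{L^\infty}$ with $\delta>0$, does force the weighted maximum into a compact region, but it then only yields $\rho^{-\delta}|N|\leq C[\dots]$, i.e. $|N(P)|\leq C\rho(P)^{\delta}[\dots]$, which is strictly weaker than \eqref{EigenLinftyest} and cannot be bootstrapped back to $\|N\|_{L^\infty}$ without an additional decay mechanism. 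Your second fix (``simultaneous blow-down on both ends promoting $\tilde N_\infty$ to a genuine Jacobi field on $\Gamma$'') is too vague: if the mass of $N_k$ concentrates at one escaping point, the unrescaled limit on $\Gamma$ is zero, so there is no nontrivial global Jacobi field produced. What is actually needed on the planar ends is a quantitative decay statement for solutions with small data on the inner circle $\{\rho\approx 3R_0\}$ and zero Dirichlet data on the outer circle $\{\rho=R\}$; this is usually obtained either by a barrier/supersolution for $\Delta+(2+\mu)\rho^{-6}$ in the annulus, or by separating variables in $\theta$ on the end and treating the zero mode $a+b\log\rho$ explicitly (the boundary conditions pin it down and prevent an $O(1)$ peak in the middle). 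Without such an ingredient the contradiction argument does not terminate, so the proposal as written does not establish \eqref{EigenLinftyest}.
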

\begin{proof}
Since $-\mu_0<\mu\leq 0$, the proof \eqref{EigenLinftyest} and \eqref{Eigenw2pest} is almost the same as the proof of Lemma 10.2 in \cite{PW} and Proposition 8.2 in Liu-Ma-Wei-Wu \cite{LMWW} respectively.
\end{proof}

We also consider the following eigenvalue problem for $\mathbb L(\cdot; \psi, A)$
\begin{equation}\label{GLEigenBR}
\begin{cases}
\mathbb L(v;\psi,A)-\mu \rho^{-6}v=0&\text{ in }B_{\epsilon^{-1}R}\\
v=0&\text{ on }\partial B_{\epsilon^{-1}R}.
\end{cases}
\end{equation}
and the eigenvalue problem in $\R^4$
\begin{equation}\label{GLEigenR4}
\mathbb L(v;\psi,A)-\mu \rho^{-6}v=0\text{ in }\R^4.    
\end{equation}
For the eigenvalue problems, we will show that the corresponding eigenfunction decays exponentially away from $\Ge$.

\begin{lemma}\label{EigenDecay}
Let $v$ be a solution to \eqref{GLEigenBR} or \eqref{GLEigenR4} with $\mu\leq 0$. Then for any fixed $0<\delta<\frac{\sqrt2}{2}\min\{\sqrt \lambda, 1\}$, there exists a constant $C$ depends on $\mu$ and $\delta$ but independent of sufficiently small $\epsilon>0$ and large $R>0$, such that
\begin{equation*}
|v(\ts,\tth,t_1,t_2)|\leq C\|v\|_{L^\infty}e^{-\delta \tilde r}\text{ in }\Sigma_{\epsilon},
\end{equation*}
Outside $\Sigma_{\epsilon}$, we have
\begin{equation}\label{EigenExpDecayOutside}
|v(\ts,\tth,t_1,t_2)|<Ce^{-\delta r_\epsilon}\text{ in }\Sigma_{\epsilon}^c.
\end{equation}
\end{lemma}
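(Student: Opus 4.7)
My approach is a weighted Agmon-type energy estimate that exploits the positivity of the zeroth-order ``mass'' operator associated to $\mathbb L(\cdot;\psi,A)$ far from $\Gamma_\epsilon$. The starting observation is that inside $\Sigma_\epsilon$, as $\tilde r\to\infty$, one has $|\psi_0|=U(\tilde r)\to 1$ and $\nabla_{A_0}\psi_0\to 0$ exponentially with rate $m_0:=\min\{\sqrt\lambda,1\}$, while the correction $(\eta_0,B_0)$ constructed in \cite{LMWW} is exponentially small. After a pointwise gauge transformation $\gamma=-\arg\psi$, the operator $\mathbb L(\cdot;\psi,A)$ becomes a perturbation of exponentially small relative size of the constant-coefficient ``vacuum'' operator, whose action on the real variables $(\xi_1,\xi_2,B)$ (where $\xi=\xi_1+i\xi_2$) has block-diagonal mass matrix $\mathcal V_\infty=\mathrm{diag}(\lambda,1,1,1,1,1)$, by the algebraic identity $\frac12(\lambda-1)\bar\xi+\frac12(\lambda+1)\xi=\lambda\xi_1+i\xi_2$ at $|\psi|=1$. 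In particular the smallest effective mass asymptotically is $m_0^2$, and the coupling terms $2i\langle B,\nabla_A\psi\rangle$ and $2\Im(\overline{\nabla_A\psi}\cdot\xi)$ are $O(e^{-m_0\tilde r})$.

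The heart of the argument is then a weighted energy estimate inside $\Sigma_\epsilon$. Fix $R_0$ large and a smooth cutoff $\chi(\tilde r)$ supported in $\{\tilde r\geq R_0\}$ with $\chi\equiv 1$ on $\{\tilde r\geq R_0+1\}$. Pairing the equation $\mathbb L v-\mu\rho^{-6}v=0$ with the gauge-covariant test function $\chi^2 e^{2\delta\tilde r}\bar v$, integrating by parts twice to commute the exponential weight through the Laplacian (using $|\nabla\tilde r|=1$ and $\Delta\tilde r=O(1/\tilde r)$), and taking real parts, produces the identity
\begin{equation*}
\int\chi^2 e^{2\delta\tilde r}|\nabla v|^2+\int\chi^2 e^{2\delta\tilde r}\langle\mathcal V v,v\rangle-2\delta^2\int\chi^2 e^{2\delta\tilde r}|v|^2=\mu\int\chi^2 e^{2\delta\tilde r}\rho^{-6}|v|^2+\mathrm{(cutoff\ terms)}.
\end{equation*}
Since $\mathcal V\geq(m_0^2-o_{R_0}(1))I$ and $\mu\leq 0$, the coercivity condition $m_0^2-2\delta^2>0$ --- equivalent to $\delta<\frac{\sqrt 2}{2}m_0$ --- yields a finite weighted $L^2$ bound $\int_{\{\tilde r\geq R_0+1\}}e^{2\delta\tilde r}|v|^2\leq C\|v\|_{L^\infty}^2$. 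Standard interior elliptic (Moser/Schauder) estimates for the system $\mathbb L$ upgrade this to the pointwise bound $|v(x)|\leq C\|v\|_{L^\infty}e^{-\delta\tilde r(x)}$ throughout $\Sigma_\epsilon$.

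For the exterior bound \eqref{EigenExpDecayOutside}, outside $\Sigma_\epsilon$ we have $(\psi,A)=(We^{i\varphi},Zd\varphi)$ with $W,Z=1+O(e^{-\delta r})$, so after gauging $\varphi$ away the operator $\mathbb L$ is again a perturbation of the vacuum operator with the same mass matrix $\mathcal V_\infty$. The interior estimate from the previous step gives $|v|\leq C\|v\|_{L^\infty}e^{-\delta r_\epsilon}$ along $\partial\Sigma_\epsilon$ (where $\tilde r=r_\epsilon$); combining this with Kato's inequality applied to $|v|$, which yields $-\Delta|v|+(m_0^2-o(1))|v|\leq 0$ outside $\Sigma_\epsilon$, the constant $Ce^{-\delta r_\epsilon}$ serves as a supersolution barrier and the maximum principle closes the estimate.

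The principal obstacle is the coercivity step: in the gauge-fixed operator the components $\xi_1,\xi_2,B$ couple through exponentially small but genuinely nonzero cross-terms, so one must carefully diagonalize the zeroth-order part in a moving gauge frame and verify that the smallest eigenvalue of $\mathcal V$ remains bounded below by $m_0^2-o(1)$ uniformly in $\epsilon$. The factor $\frac{\sqrt 2}{2}$ in the exponential rate is the standard loss from the double integration-by-parts in the Agmon identity and cannot be recovered by this method, but it is more than sufficient for the subsequent applications. A secondary difficulty is matching the interior and exterior estimates across $\partial\Sigma_\epsilon$, where the Fermi coordinate system degenerates; this requires tracking all constants uniformly in $\epsilon$ and checking that the error terms from the metric expansion in Lemma \ref{Laplace1Expansion} remain subordinate to the mass terms.
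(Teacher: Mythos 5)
Your plan is correct in all essential respects, but it takes a genuinely different route from the paper. The paper's proof is a short \emph{pointwise} argument: it writes out the identity $0 = \langle \mathbb{L}(v;\psi,A)-\mu\rho^{-6}v,\,v\rangle$ using the Bochner-type cancellation
\begin{equation*}
\langle -\Delta_A \xi, \xi\rangle = -\tfrac12\Delta|\xi|^2 + |\nabla_A\xi|^2,
\end{equation*}
observes that for $\tilde r\geq r_0$ the zeroth-order terms dominate all exponentially small cross-terms and are bounded below by $2\delta^2|v|^2$ (this is where $\delta<\frac{\sqrt2}{2}\min\{\sqrt\lambda,1\}$ enters, exactly as you identify: the smallest mass eigenvalue is $\min\{\lambda,1\}$ and the $\sqrt2$ is lost by discarding $|\nabla v|^2$ and multiplying by $2$), so that $|v|^2$ is a subsolution of $-\Delta + 4\delta^2$ in the annular region $r_0<\tilde r<r_\epsilon$. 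A barrier of the form $w=e^{-2\delta\tilde r}+e^{-2(2r_\epsilon-\delta)\tilde r}$ plus the maximum principle then give $|v|\lesssim \|v\|_{L^\infty}e^{-\delta\tilde r}$, and the exterior bound follows again by the maximum principle. You instead carry out an Agmon-type weighted energy estimate (pair with $\chi^2 e^{2\delta\tilde r}\bar v$, integrate by parts twice to pick up $-2\delta^2$ from the weight, invoke coercivity $m_0^2-2\delta^2>0$) and then upgrade the weighted $L^2$ bound to pointwise decay via local elliptic estimates. Both routes give the same threshold and the same conclusion; the paper's is shorter because the identity $\langle\mathbb{L}v,v\rangle=0$ is already in a form that exposes $\Delta|v|^2$, while yours is more modular and would adapt more easily if the solution $v$ had limited regularity.

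Two small points worth noting. First, the gauge rotation $\gamma=-\arg\psi$ you use to read off the mass matrix $\mathrm{diag}(\lambda,1,1,1,1,1)$ is a convenient diagonalization but is not needed: one can bound $\frac{\lambda-1}{2}\Re(\psi^2\bar\xi^2)+\frac{\lambda+1}{2}|\xi|^2\geq\min\{\lambda,1\}|\xi|^2$ directly from $|\Re(\psi^2\bar\xi^2)|\leq|\psi|^2|\xi|^2$, which is what the paper's argument implicitly uses; it also avoids the (benign) multivaluedness of $\arg\psi$ in the annulus. Second, your attribution of the factor $\frac{\sqrt2}{2}$ to ``the standard loss from the double integration-by-parts in the Agmon identity'' is slightly inaccurate — a conjugated Agmon estimate ($\phi=e^{\delta\tilde r}v$) can achieve any $\delta<m_0$, and the $\sqrt2$ loss here is specific to the cruder test-function version you use (which matches what the maximum-principle approach also loses). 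Since the lemma only asserts the rate $\frac{\sqrt2}{2}m_0$, this is not a gap.
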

\begin{proof}
We first assume that $v=(\xi, B)$ is a solution to \eqref{GLEigenBR}. Note that if we test it against $v$, we find that
\begin{equation*}
\begin{aligned}
0=&\langle \mathbb L(v;\psi,A)-\mu\rho^{-6}v, v\rangle=\Re( \mathbb L(v;\psi,A)_1\overline{\xi})+\langle\mathbb L(v;\psi,A)_2, B\rangle-\mu\rho^{-6}|v|^2 \\
=&-\frac12\Delta |v|^2+|\nabla v|^2-2\Im[\langle A,d\xi \rangle \overline{\xi}]+|A|^2|\xi|^2-4\Im[\langle B,\nabla_{A}\psi\rangle \overline{\xi}]\\
&+\frac{\lambda-1}{2}\Re(\psi^2\overline{\xi}^2)+(\lambda+\frac12)|\psi|^2|\xi|^2-\frac{\lambda}{2}|\xi|^2+|B|^2|\psi|^2-\mu\rho^{-6}|v|^2.
\end{aligned}
\end{equation*}
Note that $|A|=O(\tilde r^{-1}e^{-\tilde r})$, $1-|\psi|^2=O(e^{-m_{\lambda}\tilde r})$ and $|\nabla_{A}\psi|=O(e^{-\min\{1,\sqrt\lambda\}\tilde r})$ as $\tilde r\to \infty$. In the region $r_0<\tilde r<r_\epsilon$ for some sufficiently large constant $r_0>0$ depending on $\lambda$ and $\delta$, we have
\begin{equation*}
0\geq -\Delta|v|^2+4\delta^2 |v|^2-2\mu\rho^{-6}|v|^2\geq -\Delta|v|^2+4\delta^2 |v|^2.
\end{equation*}

Note that $w=e^{-2\delta \tilde r}+e^{-2(2r_\epsilon-\delta)\tilde r}$ is a positive supersolution that satisfies
\begin{equation*}
-\Delta|w|^2+4\delta^2 |w|^2>0\text{ in }r_0<r<r_\epsilon
\end{equation*}
if $\epsilon$ is chosen sufficiently small and $R>0$ is sufficiently large.
Then by maximum principle, we have
\begin{equation*}
|v|\leq C\|v\|_{L^{\infty}}e^{-\delta \tilde r}\text{ for }r_0<r<r_\epsilon.
\end{equation*}
Then \eqref{EigenExpDecayOutside} follows from the maximum principle. Then we proved the lemma for a solution to \eqref{GLEigenBR}. If $v$ is a solution to \eqref{GLEigenR4}, the proof is similar.
\end{proof}

We also list below the linear theory of $\mathbb L(\cdot; \psi_0, A_0)$ studied in Liu-Ma-Wei-Wu \cite{LMWW}.
\begin{lemma}[Theorem 6.3 in \cite{LMWW}]\label{lineartheory}
If $\|(\mu,C)\|_{0,2,p,\delta}<+\infty$ for some $p>1$ and $\delta>0$. Then for $\epsilon>0$ sufficiently small, there exists a unique solution $(\xi,B)$ to
\begin{equation*}
\begin{aligned}
\mathbb L(\xi,B;\psi_0,A_0)=(\mu,C)+c_1(s,\theta)\mathcal{T}_1+c_2(s,\theta)\mathcal{T}_2\ &\text{ for all }(s,\theta,t_1,t_2)\in \Gamma_\epsilon\times \R^2,\\
\int_{\R^2} \langle (\xi, B)(s,\theta,t_1,t_2), \mathcal{T}_j(t_1,t_2)\rangle dt_1dt_2=0\ &\text{  for all }(s,\theta)\in \Ge
\end{aligned}
\end{equation*}
with
\begin{equation*}
\|(\xi,B)\|_{2,2,p,\delta}\leq C_0 \|(\mu,C)\|_{0,2,p,\delta}
\end{equation*}
for some $C_0>0$ independent of $\epsilon$ and $(\mu, C)$. Here $c_j(s,\theta)$, $j=1,2$, is given by
\begin{equation*}
c_j(s,\theta)=-\frac{\int_{\R^2} \langle (\mu,C)(s,\theta,t_1,t_2), \mathcal{T}_j(t_1,t_2)\rangle dt_1 dt_2}{\int_{\R^2}|\mathcal{T}_j|^2}.  \end{equation*}
\end{lemma}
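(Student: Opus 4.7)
The plan is to prove this via the standard Lyapunov--Schmidt scheme: (i) a projection orthogonal to the approximate cokernel $\mathrm{span}\{\mathcal{T}_1,\mathcal{T}_2\}$, (ii) a contradiction-based a priori estimate, and (iii) a Fredholm-type approximation for existence.

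First, the coefficients $c_j(s,\theta)$ are forced by fiberwise orthogonality. Using the identities of the previous lemma ($\Re\!\int_{\R^2}\overline{T_1}T_2=0$ together with $\int_{\R^2}\langle T_{B_1},T_{B_2}\rangle=O(\epsilon^3)$), the stated choice of $c_j$ makes $(\mu,C)+c_1\mathcal{T}_1+c_2\mathcal{T}_2$ perpendicular in each normal fiber to $\mathcal{T}_1,\mathcal{T}_2$, up to an $O(\epsilon^3)$ Gram-matrix inversion that is harmless for $\epsilon$ small. We therefore reformulate the problem as: find $(\xi,B)$ in the closed subspace $\mathcal{H}_\epsilon$ of the weighted Sobolev space defined by the orthogonality condition, solving $\Pi_\epsilon\,\mathbb{L}(\xi,B;\psi_0,A_0)=\Pi_\epsilon(\mu,C)$, where $\Pi_\epsilon$ denotes the fiberwise orthogonal projection onto $\mathcal{H}_\epsilon$.

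Second, I would establish the bound $\|(\xi,B)\|_{2,2,p,\delta}\le C\,\|(\mu,C)\|_{0,2,p,\delta}$ by a blow-up contradiction. Assume sequences $\epsilon_n\to 0$, $(\xi_n,B_n)\in\mathcal{H}_{\epsilon_n}$ with $\|(\xi_n,B_n)\|_{2,2,p,\delta}=1$ but the right-hand-side norm tending to zero. The analogue of Lemma~\ref{EigenDecay} applied to $\mathbb{L}(\cdot;\psi_0,A_0)$ (its proof only uses the far-field asymptotics of $(\psi_0,A_0)$, which coincide with those of $(\psi,A)$) yields uniform exponential decay of $(\xi_n,B_n)$ in the transverse direction. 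Selecting points $P_n\in\Gamma_{\epsilon_n}$ where the weighted norm is approximately attained and recentering, the metric $(g_{ij})$ flattens to the Euclidean $\R^2\times\R^2$ metric on any bounded region, and $\mathbb{L}(\cdot;\psi_0,A_0)$ converges locally to the gauge-fixed linearization about the standard $1$-vortex on $\R^2$. Non-degeneracy of the 2D vortex modulo translations and gauge, combined with the fiberwise orthogonality to $\mathcal{T}_j$ (which passes to the limit), forces the blow-up limit to vanish on compact sets. If instead the chosen points drift out of $\Sigma_{\epsilon_n}$, the limit is a constant-coefficient linear problem with $|\psi_0|\equiv 1$, whose only bounded solutions are ruled out by the $e^{\sigma r}$-weighted decay. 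Standard interior elliptic bootstrap then contradicts $\|(\xi_n,B_n)\|_{2,2,p,\delta}=1$.

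Third, for existence I would approximate on the truncated domains $\Gamma_\epsilon^R\times B_R(0)$ with Dirichlet boundary data, on which $\mathbb{L}(\cdot;\psi_0,A_0)$ is uniformly elliptic and self-adjoint up to lower order. The Fredholm alternative, together with the bounded-domain version of the a priori estimate (proved by the same blow-up argument with the additional straightforward check that boundary contributions decay), provides a solution $(\xi^R,B^R)$ with uniform bounds in $\|\cdot\|_{2,2,p,\delta}$. Passing $R\to\infty$ along a diagonal subsequence in the weighted space yields the desired $(\xi,B)\in\mathcal{H}_\epsilon$, and uniqueness follows by applying the a priori estimate to the difference of two solutions.

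The main obstacle is the blow-up step: one must verify that the gauge-fixing term $(\Im(\overline{\psi_0}\xi)-d^*B)^2$ embedded in $\mathbb{L}$ yields a coercive contribution in the limit, so that no spurious zero modes appear beyond $\mathcal{T}_1,\mathcal{T}_2$ and the pure gauge directions (already killed by the gauge fix), and simultaneously one must track the two competing blow-up scenarios, concentration near $\Gamma_\epsilon$ versus escape to infinity. Careful bookkeeping of the weights $\rho^\beta e^{\sigma r}$ under the rescaling is also required: the longitudinal weight $\rho$ becomes effectively constant on the rescaled region, while the exponential weight transforms into the decay condition that selects the correct function space for the limit.
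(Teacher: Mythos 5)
The paper does not prove this lemma: it is imported verbatim from Theorem~6.3 of \cite{LMWW}, so there is no internal argument to compare against. Your proposal is the standard Lyapunov--Schmidt scheme (fiberwise projection determining $c_j$ via a nearly diagonal Gram matrix, a priori bound by blow-up contradiction using the non-degeneracy of the gauge-fixed $1$-vortex, existence by Fredholm on exhausting domains), which is precisely the route taken in \cite{LMWW} and in the related constructions \cite{PinoKow2,PW,BM1,BM2,BM3}; as such it is essentially the same approach as the cited source. Two minor points worth making explicit in a full write-up: (i) the off-diagonal entry of the Gram matrix is exactly zero for the scalar part (since $\langle T_1,T_2\rangle=\Re(T_1\overline{T_2})$ and $\Re\int\overline{T_1}T_2=0$, while $\Im\int\overline{T_1}T_2\neq 0$ does not enter the real inner product) and only $O(\epsilon^3)$ from the $T_{B_j}$ part, so invertibility is not merely ``up to $O(\epsilon^3)$'' but essentially exact; (ii) the exponential weight requires $\sigma<\min\{\sqrt\lambda,1\}$ so that the limiting transverse operator, whose coefficients tend to the constant-coefficient massive system as the fiber variable $r\to\infty$, admits no bounded kernel compatible with the $e^{\sigma r}$ weight --- this is the mass-gap condition already implicit in Lemma~\ref{EigenDecay} and should be cited when ruling out the drift-to-infinity scenario.
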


\subsection{Improvement of approximation}
Establishing the stability amounts to analyze the spectrum of the linearized operator around the solution. In our situation, it will be necessary to find a more accurate approximate solution. That is, a higher order expansion of $(\eta_0, B_0)$. 

Let 
\begin{equation*}
    S(\psi_0, A_0):=
    \begin{pmatrix}
        -\Delta_{A_0} \psi_0 +\frac{\lambda}{2}(|\psi_0|^2-1)\psi_0\\
        d^*dA_0-Im(\nabla_{A_0}\psi_0\cdot\bar\psi_0)
    \end{pmatrix}.
\end{equation*}
be the error of the approximate solution. Then direct computation tells us that
\begin{equation*}
\begin{aligned}
-\Delta_{A_0} \psi_0 +\frac{\lambda}{2}(|\psi_0|^2-1)\psi_0
&=2\epsilon^2\rho^{-6} \tilde rU'e^{i\tilde\phi}+O(\epsilon^3\rho^{-4}e^{-\tilde r})
\end{aligned}
\end{equation*}
and
\begin{equation*}
\begin{aligned}
d^*dA_0-Im(\nabla_{A_0}\psi_0\cdot\bar\psi_0)
&=O(\epsilon^3\rho^{-2}e^{-\tilde r})d\ts+O(\epsilon^3\rho^{-2}e^{-\tilde r})d\tth+[2\epsilon^2\rho^{-6} t_2(\partial_{t_2}A_3-\partial_{t_1}A_4)+O(\epsilon^3\rho^{-4}e^{-\tilde r})]dt_1\\
&-[2\epsilon^2\rho^{-6} t_1(\partial_{t_2}A_3-\partial_{t_1}A_4)+O(\epsilon^3\rho^{-4}e^{-\tilde r}))]dt_2\\
&=O(\epsilon^3\rho^{-2}e^{-\tilde r}))d\ts+O(\epsilon^3\rho^{-2}e^{-\tilde r})d\tth-[2\epsilon^2\rho^{-6} \sin\tilde\phi V'+O(\epsilon^3\rho^{-4}e^{-\tilde r}))]dt_1\\
&+[2\epsilon^2\rho^{-6}\cos\tilde\phi V'+O(\epsilon^3\rho^{-4}e^{-\tilde r}))]dt_2.
\end{aligned}
\end{equation*}
See also Lemma 4.1 to Lemma 4.5 in \cite{LMWW} for similar computation.

The $O(\epsilon^2)$-terms above are orthogonal to the approximate kernels $\mathcal T_1$ and $\mathcal T_2$ up to $O(\epsilon^2)$. However, they are still large terms. In order to cancel these terms, we introduce the following improved approximate solution. Let $(\eta_1, B_1)$ be the solution to the equation
\begin{equation*}
    \mathbb L(\eta_1,B_1;\psi^{(1)},A^{(1)})=
    \begin{pmatrix}
        \tilde rU'e^{i\tilde\phi}\\
        -\sin\tilde\phi V' dt_1+\cos\tilde\phi V' dt_2
    \end{pmatrix}
    \text{ in }\R^2.
\end{equation*}
Note that the right-hand side is perpendicular to $\widetilde{\mathcal T}_1:=\mathcal T_1-(0,\cos 2s \sin \tilde\phi V' d\theta)$ and $\widetilde{\mathcal T}_2:=\mathcal T_2-(0,\cos 2s \sin \tilde\phi V' d\theta)$, the kernels of $\mathbb L(\eta_1,B_1;\psi^{(1)},A^{(1)})$ in $\R^2$. Thus the existence of such $(\eta_1,B_1)$ is ensured. Then the similar argument as in \cite{LMWW} implies that the solution constructed in \cite{LMWW} can be written as
\begin{equation*}
\begin{aligned}
(\psi,A)=(\psi_0,A_0)+(\eta_0,B_0)
=
(\psi_0,A_0)-2\epsilon^2\rho^{-6}
(\eta_1,B_1)
+O(\epsilon^3\rho^{-4}e^{-\delta \tilde r}).
\end{aligned}
\end{equation*}

\section{Analysis of the linearized operator}
The idea of proof of stability is as follows: If $(\xi,B)$ is an eigenfunction of $\mathbb L(\cdot;\psi,A)$ w.r.t. some negative eigenvalue, then it can be written as $(\xi,B)\approx k_1(\ts,\tth)\mathcal{T}_1(t_1,t_2)+k_2(\ts,\tth)\mathcal{T}_2(t_1,t_2)$, where $N=k_1(\ts,\tth)\m+k_2(\ts,\tth)\n$ is a negative direction of the Jacobi operator $L_\Gamma$. Then the stability of $L_{\Gamma}$ can be applied. As a consequence, a detailed analysis of the linearized operator $\mathbb L(\cdot;\psi,A)$ is required. To start with, we need the following proposition to build up the relationship between $\mathbb L(\cdot; \psi, A)$ and  $L_\Gamma$.

\begin{proposition}\label{TestKernel}
Let $N=k_1(\ts,\tth)\m+k_2(\ts,\tth)\n$ be a normal vector field. Consider the vector-valued function defined for $x\in \Sigma_\epsilon$ by
\begin{equation*}
v(x)=v(\ts,\tth,t_1,t_2)=(\xi,B)(\ts,\tth,t_1,t_2)=k_1(\ts,\tth)\mathcal T_1(t_1,t_2)+k_2(\ts,\tth)\mathcal T_2(t_1,t_2).
\end{equation*}
Then we have
\begin{equation*}
\begin{aligned}
    &\int_{(a,b)\in \Sigma_\epsilon}\langle \mathbb L(v;\psi,A), \mathcal T_1 \rangle dt_1dt_2\m+\int_{(a,b)\in \Sigma_\epsilon}\langle \mathbb L(v;\psi,A), \mathcal T_2\rangle dt_1dt_2\n\\
    =&-L_{\Ge}N\int_{\R^2}|\mathcal T_1|^2+O(\epsilon^3\rho^{-4}(\partial_{ij}N+\partial_{i}N+N)).
\end{aligned}
\end{equation*}
\end{proposition}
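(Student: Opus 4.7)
The plan is to compute $\mathbb L(v; \psi, A)$ using the Fermi-coordinate expansions in Lemmas \ref{Laplace1Expansion}--\ref{d*d} and then take the $L^2(dt_1 dt_2)$-inner products against $\mathcal T_1$ and $\mathcal T_2$. The key observation driving the whole argument is that, with $(\ts, \tth)$ held fixed, the restriction of $\mathbb L(\cdot; \psi_0, A_0)$ to the normal fiber coincides at leading order with the flat two-dimensional linearized operator $\mathbb L(\cdot; \psi^{(1)}, A^{(1)})$, whose kernel contains precisely $\mathcal T_1, \mathcal T_2$. Consequently, when $\mathbb L$ acts on $v = k_1 \mathcal T_1 + k_2 \mathcal T_2$, the principal 2D piece annihilates each $\mathcal T_i$; all surviving contributions come from tangential derivatives falling on $k_1(\ts,\tth), k_2(\ts,\tth)$, from curvature corrections in $g^{ij}$, and from the improved approximate solution $(\eta_0, B_0) = -2\epsilon^2 \rho^{-6}(\eta_1, B_1) + O(\epsilon^3 \rho^{-4} e^{-\delta\tilde r})$.

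I first split $\mathbb L(v; \psi, A) = \mathbb L(v; \psi_0, A_0) + R(v; \eta_0, B_0)$. For the first piece, substituting the expansions of $\Delta$, $d^*A \xi$, $\langle A, d\xi\rangle$ and $|A|^2 \xi$ into $\mathbb L$ and organizing by the order at which tangential derivatives hit $k_i$ yields: the second-order terms $\rho^{-6} k_{iss} \mathcal T_i$ and $\rho^{-2} k_{i\theta\theta}\mathcal T_i$; the first-order term $2\rho^{-4}\cos 2\epsilon\ts \, k_{is}\mathcal T_i$; the cross-coupling terms $\pm 2\rho^{-2}\cos 2\epsilon\ts \, k_{i'\theta}\mathcal T_i$, which arise by pairing the off-diagonal $\pm\epsilon\cos(2\epsilon\ts)$ entries of the Fermi metric with the angular action $(t_1 \partial_{t_2} - t_2 \partial_{t_1})$ on the phase $e^{i\tilde\phi}$ of $\mathcal T_i$; and the zeroth-order coefficient $-\rho^{-2}\cos^2 2\epsilon\ts$, produced by the $\rho^{-2}\cos^2 2s$ terms in \eqref{Laplace3Expansion}--\eqref{Laplace4Expansion}. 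Together these assemble the connection Laplacian $\Delta_{\Ge}^\nu N$ (tensored with $\mathcal T_i$). The remaining $2\rho^{-6} N$ piece of $L_{\Ge} N = \Delta_{\Ge}^\nu N + 2\rho^{-6} N$ comes from $R(v; \eta_0, B_0)$: by construction, $(\eta_1, B_1)$ solves the 2D equation whose right-hand side matches the $O(\epsilon^2)$ part of $S(\psi_0, A_0)$, and its coupling with $\mathcal T_i$ through the quadratic terms of $\mathbb L$ reproduces precisely $2\rho^{-6} k_i \mathcal T_i$.

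Integration in $(t_1, t_2)$ then uses the identities $\Re\int \overline{T_1} T_2 = 0$, $\int |T_1|^2 = \int |T_2|^2$, and $\int \langle T_{B_1}, T_{B_2}\rangle = O(\epsilon^3)$ from the preceding lemma. Angular integrals of all terms with a non-matching $e^{\pm i n\tilde\phi}$-harmonic vanish, so only the diagonal pairings $\langle k_i \mathcal T_i, \mathcal T_i\rangle$ and the cross pairings mediated by $\Im\int \overline{T_1} T_2$ survive at leading order, producing exactly the factor $\int_{\R^2}|\mathcal T_1|^2$ in front of $L_{\Ge} N$. The residual terms---the $O(\epsilon^3)$ pieces in Lemma \ref{Laplace1Expansion}, the $O(\epsilon^3 \rho^{-4} e^{-\delta \tilde r})$ remainder in $(\eta_0, B_0)$, and the $f_1, f_2$-dependent pieces controlled by the bounds in \cite{LMWW}---are all controlled by $O(\epsilon^3 \rho^{-4}(|\partial_{ij} N| + |\partial_i N| + |N|))$ after integration against $\mathcal T_j$.

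The main technical obstacle is the detailed verification that the zeroth-order term produced by the second step is exactly $\rho^{-2}(2\rho^{-4} - \cos^2 2\epsilon\ts)$ with no spurious extra constant. Several competing contributions must be tracked---the $\cos^2 2\epsilon\ts$-pieces in \eqref{Laplace3Expansion}--\eqref{Laplace4Expansion}, the sign conventions in $d^* A\xi$ versus $|A|^2 \xi$, the phase shift in $\mathcal T_i$ due to $\tilde\phi = \tilde\phi(\ts, \tth, a - \epsilon f_1, b - \epsilon f_2)$, and the refinement $-2\epsilon^2 \rho^{-6}(\eta_1, B_1)$---and shown to combine cleanly. Exploiting the $e^{\pm i\tilde\phi}$-harmonic decomposition of $\mathcal T_1 \pm i\mathcal T_2$, which diagonalizes the 2D fiber rotation and filters out all non-resonant contributions, makes this bookkeeping tractable.
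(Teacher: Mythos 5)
Your proposal takes the same overall route as the paper---project $\mathbb L(v;\psi,A)$ onto $\mathcal T_1$ and $\mathcal T_2$ using the Fermi-coordinate expansions, observe that the 2D fiber operator annihilates $\mathcal T_i$, and track tangential derivatives, curvature corrections, and nonlinear corrections---but there is a concrete misattribution in the middle that would derail the computation.

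You claim that the first piece $\mathbb L(v;\psi_0,A_0)$ assembles only $\Delta_{\Gamma_\epsilon}^\nu N$, and that the missing $2\rho^{-6}N$ piece of $L_{\Gamma_\epsilon}N = \Delta_{\Gamma_\epsilon}^\nu N + 2\rho^{-6}N$ is produced by the nonlinear remainder $R(v;\eta_0,B_0)$ built from the improved approximation. This is not how the pieces actually fall. The $2\rho^{-6}$ term (equivalently the $|A_{\Gamma_\epsilon}|^2$ term) already arises from the purely \emph{geometric} Fermi-coordinate expansion of $\Delta$: in Lemma~\ref{Laplace1Expansion} the coefficient of $(t_1\xi_{t_1}+t_2\xi_{t_2})$ is $-\epsilon^2\rho^{-2}(\rho^{-4}+1)$, and testing $(t_1T_{1t_1}+t_2T_{1t_2})$ against $\overline{T_1}$ gives $-\int_{\R^2}|T_1|^2$, which is exactly what produces $\epsilon^2\rho^{-2}\cdot 2\rho^{-4}k_1\int|T_1|^2$ in Lemma~\ref{InnerProduct1}. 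No input from $(\eta_0,B_0)$ is needed for this piece at all; it appears already at the stage where one computes $\Re\int-\Delta\xi\cdot\overline{T_1}$.

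The actual role of the improved approximate solution $(\eta_0,B_0) = -2\epsilon^2\rho^{-6}(\eta_1,B_1)+O(\epsilon^3\rho^{-4}e^{-\delta\tilde r})$ is different and more delicate. The $T_1$- and $T_{B_1}$-projections do \emph{not} produce the same zeroth-order coefficient: Lemma~\ref{InnerProduct1} gives $\epsilon^2\rho^{-2}(2\rho^{-4}-\cos^2 2\epsilon\ts)k_1$ multiplying $\int|T_1|^2$, whereas Lemma~\ref{InnerProduct2} gives only $-\epsilon^2\rho^{-2}\cos^2 2\epsilon\ts\,k_1$ multiplying $\int|T_{B_1}|^2$, with no $2\rho^{-4}$ term. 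When you combine the two projections to write the answer as $L_{\Gamma_\epsilon}N\cdot\int|\mathcal T_1|^2$, you incur a spurious $O(\epsilon^2\rho^{-6})$ mismatch of size $2\epsilon^2\rho^{-6}k_1\int|T_{B_1}|^2$. The nonlinear terms coming from $(\eta_0,B_0)$ are precisely what cancel this mismatch, and verifying the cancellation requires the nontrivial identity $E=2\pi\int_0^{+\infty}(V')^2/\tilde r\,d\tilde r+O(\epsilon)$, which the paper establishes by differentiating the gauge-invariance of $\mathbb L(\cdot;\psi_0,A_0)$ (identity~\eqref{gaugefix}) and the translation structure~\eqref{translationfix} in tandem with the defining equation~\eqref{NonlinearFixRHS12} for $(\eta_1,B_1)$. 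Your plan has no mechanism for discovering either the mismatch or its cancellation; as written, if you double-counted as you propose (geometric $2\rho^{-6}$ from $\Delta\xi$ plus another $2\rho^{-6}$ from $R$), you would land on $4\rho^{-6}N$, and if you somehow missed the geometric contribution you would still fail to account for the $T_{B_1}$-projection discrepancy. Either way the computation as outlined cannot close without the $E$-identity, which is the genuinely new content of the paper's proof beyond brute-force expansion.
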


To prove the proposition above, we calculate the integrals term by term. For simplicity, we only compute the first integral. The computation of the second integral is similar. The following lemma computes the inner product of the first components of $\mathbb L(v;\psi, A)$ and $\mathcal{T}_1$.
\begin{lemma}\label{InnerProduct1}
\begin{equation*}
\begin{aligned}
&\Re\int_{(a,b)\in \Sigma_\epsilon}(\mathbb L(v;\psi,A))_1\cdot \overline{T_1}\\
=&-[\Delta_{\Gamma_{\epsilon}}k_1-2\epsilon\rho^{-2}\cos 2sk_{2\tth}+\epsilon^2\rho^{-2}(2\rho^{-4}-\cos^2 2\epsilon \ts)k_1+O(\epsilon^2\nabla^2_{\Ge}k_1)]\int_{\R^2}|T_1|^2\\
+&k_2\Im\int_{\R^2}d^*B_0 T_2\overline{T_1}-2\Im\int_{\R^2}\langle B_0,d\xi\rangle \overline{T_1}+\Re\int_{\R^2}(|A^{(1)}+B_0|^2-|A^{(1)}|^2)(k_1|T_1|^2+k_2T_2\overline{T_1})\\
+&O(\epsilon^3\rho^{-4}(\partial_{ij}k_l+\partial_{i}k_l+k_l)).
\end{aligned}
\end{equation*}
\end{lemma}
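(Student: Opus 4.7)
The plan is to expand $\mathbb L(v;\psi,A)_1$ term by term using the formulas in Lemma \ref{Laplace1Expansion}, substitute $\xi=k_1T_1+k_2T_2$, and exploit the fact that $(T_i,T_{B_i})$ lies in the kernel of the 2D linearized operator $\mathbb L(\cdot;\psi^{(1)},A^{(1)})$ on $\R^2_{t_1,t_2}$. Concretely, I write
\begin{equation*}
-\Delta_A\xi=-\Delta\xi-id^*A\,\xi+2i\langle A,d\xi\rangle+|A|^2\xi
\end{equation*}
and decompose each piece into (i) a ``pure 2D'' contribution in which all $(\tilde s,\tilde\theta)$-derivatives fall on $k_j$ and all $(t_1,t_2)$-derivatives on $T_j$ with $A$ replaced by $A^{(1)}=A_3dt_1+A_4dt_2$, (ii) a tangential contribution $\Delta_{\Ge}\xi=(\Delta_{\Ge}k_1)T_1+(\Delta_{\Ge}k_2)T_2$, (iii) cross terms of order $\epsilon$ coming from the off-diagonal metric entries (notably $2\epsilon(t_2\xi_{\tth t_1}-t_1\xi_{\tth t_2})\rho^{-2}\cos 2\epsilon\tilde s$ in the expansion of $\Delta\xi$, together with the $A_2=-(1-V)\cos 2s$ part of $A_0$), (iv) corrections of order $\epsilon^2$ from the $|A|^2\xi$ and $|\psi|^2\xi$ potentials, and (v) genuine remainder terms that I will lump into $O(\epsilon^3\rho^{-4}(\partial_{ij}k_l+\partial_ik_l+k_l))$ using the pointwise bounds on $(\eta_0,B_0)$ and $\nabla^2 T_i,\nabla T_i$.

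Next, I integrate against $\overline{T_1}$ over $\R^2_{t_1,t_2}$ and take the real part. The crucial cancellation is that the pure 2D piece is exactly $k_1\cdot\mathbb L(T_1,T_{B_1};\psi^{(1)},A^{(1)})_1+k_2\cdot\mathbb L(T_2,T_{B_2};\psi^{(1)},A^{(1)})_1=0$; this eliminates all the ``big'' terms of size $O(1)$ localized in the normal plane. After this cancellation only (ii)--(v) survive. The tangential piece immediately yields $(\Delta_{\Ge}k_1)\int|T_1|^2$ after using $\Re\int T_2\overline{T_1}=0$ (from the orthogonality lemma). The $O(\epsilon)$ cross term from $\rho^{-2}\cos 2\epsilon\tilde s$ combined with the $A_2 d\theta$ contribution in $d\xi$ and $d^*A$ produces exactly $+2\epsilon\rho^{-2}\cos 2\epsilon\tilde s\,k_{2\tth}\int|T_1|^2$ (with the right sign after the $-\Delta$): one uses $\Im\int (T_2)_{t_1}\overline{T_1}$ type identities obtained by differentiating the ODE system for $(U,V)$ and integrating by parts, and relies on the fact that integrals like $\int t_j\,|T_1|^2$ vanish by the $\tilde\phi\to\tilde\phi+\pi$ symmetry.

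For the $\epsilon^2$-level terms I separate the contribution of $A_0$ (which, through $|A_0|^2-|A^{(1)}|^2=A_2^2g^{\theta\theta}=\epsilon^2\rho^{-2}\cos^2 2\epsilon\tilde s+\ldots$, gives $-\epsilon^2\rho^{-2}\cos^2 2\epsilon\tilde s\,k_1\int|T_1|^2$) from that of $B_0$ (which is left in the stated form $\Re\int(|A^{(1)}+B_0|^2-|A^{(1)}|^2)(k_1|T_1|^2+k_2T_2\overline{T_1})$, to be dealt with in the companion lemmas). The term $-id^*A\,\xi$ splits analogously: the $A_0$ part contributes the extra curvature piece $2\epsilon^2\rho^{-6}\cdot k_1$ after using the explicit form of $\sqrt{G}g^{ij}$, while the $B_0$ part supplies $k_2\Im\int d^*B_0\,T_2\overline{T_1}$; the cross term $2i\langle A,d\xi\rangle$ similarly gives $-2\Im\int\langle B_0,d\xi\rangle\overline{T_1}$ after the $A_0$-part is absorbed into the $\Delta_{\Ge}$ and cross-term contributions above. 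Collecting everything produces the claimed identity.

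The main obstacle is bookkeeping: one has to track a dozen or so cross terms from Lemma \ref{Laplace1Expansion} and verify that, after integration against $\overline{T_1}$ and extracting the real part, most of them either combine into the displayed leading symbol of $-L_{\Ge}$ acting on the first normal component or fall into $O(\epsilon^3\rho^{-4})$. The two ingredients that make this manageable are the 2D kernel identity for $(T_i,T_{B_i})$ (which wipes out the $O(1)$ in-plane terms) and the parity identities $\int t_i|T_1|^2=0$, $\Re\int T_2\overline{T_1}=0$ (which kill many would-be $O(\epsilon)$ contributions so that only the $k_{2\tth}$-combination survives at that order). The derivative estimates on $T_i$ and the pointwise decay $|\eta_0|+|B_0|=O(\epsilon^2\rho^{-6}e^{-\delta\tilde r})$ from Section 2.4 then make the remainder bound routine.
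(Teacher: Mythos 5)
Your proposal is essentially the paper's proof: expand $-\Delta_A\xi = -\Delta\xi - id^*A\,\xi + 2i\langle A,d\xi\rangle + |A|^2\xi$ via Lemma~\ref{Laplace1Expansion}, substitute $\xi = k_1T_1 + k_2T_2$, integrate against $\overline{T_1}$, cancel the $O(1)$ in-plane block using the 2D kernel identity for $(T_1,T_{B_1})$ at $(\psi^{(1)},A^{(1)})$, and track the $O(\epsilon)$ and $O(\epsilon^2)$ cross terms through explicit integrals such as $\Re\int(t_1T_{1t_1}+t_2T_{1t_2})\overline{T_1}=-\int|T_1|^2$ and $\Re\int(t_2T_{2t_1}-t_1T_{2t_2})\overline{T_1}$.

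One small misattribution: you claim the curvature contribution $2\epsilon^2\rho^{-6}k_1$ comes from the $A_0$-part of $-id^*A\,\xi$ via $\sqrt{G}g^{ij}$, but by \eqref{Laplace2Expansion} we have $d^*A\,\xi = d^*B_0\,\xi + O(\epsilon^3)$, so the $A_0$-part of $d^*A\,\xi$ is negligible. The $2\rho^{-4}$ inside the bracket $\epsilon^2\rho^{-2}(2\rho^{-4}-\cos^2 2\epsilon\ts)k_1$ actually arises from the $-\Delta\xi$ expansion, specifically the terms $-\epsilon^2\rho^{-2}(\rho^{-4}+1)(t_1\xi_{t_1}+t_2\xi_{t_2})$ and the quadratic pieces involving $t_it_j\xi_{t_kt_l}$, after using the parity and integration-by-parts identities you allude to. This is a bookkeeping point and does not change the overall argument, but you would need to get it right to reproduce the stated coefficient.
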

\begin{proof}
From the definition of $\mathbb L$, we see that
\begin{equation*}
(\mathbb L(v;\psi,A))_1=-\Delta_A \xi +2i\langle B,\nabla_A\psi\rangle+\frac12 (\lambda-1)\psi^2\bar\xi+(\lambda+\frac12)|\psi|^2\xi-\frac\lambda 2\xi.
\end{equation*}
In the following, we test $T_1$ and calculate the integrals one by one.

\textit{Step 1: The integral of $-\Delta_A\xi$.}

From Lemma \ref{Laplace1Expansion}, we have
\begin{equation*}
\begin{aligned}
\Delta\xi
&=(T_{1t_1t_1}+T_{1t_2t_2})k_1+(T_{2t_1t_1}+T_{2t_2t_2})k_2+\epsilon^2\rho^{-2}\cos^2 (2\epsilon\ts) k_1(t_1^2T_{1t_2t_2}+t_2^2T_{1t_1t_1}-2t_1t_2T_{1t_1t_2})\\
&+\Delta_{\Gamma_{\epsilon}}k_1 T_1-\epsilon^2\rho^{-2}(\rho^{-4}+1)k_1(t_1T_{1t_1}+t_2T_{1t_2})+2\epsilon\rho^{-2}\cos (2\epsilon\ts) k_{1\tth}(t_2T_{1t_1}-t_1T_{1t_2})\\
&+\epsilon^2\rho^{-2}\cos^2 2\epsilon \ts k_2(t_1^2T_{2t_t2t_2}+t_2^2T_{2t_1t_1}-2t_1t_2T_{2t_1t_2})+\Delta_{\Gamma_{\epsilon}}k_2 T_2-\epsilon^2\rho^{-2}(\rho^{-4}+1)k_2(t_1T_{2t_1}+t_2T_{2t_2})\\
&+2\epsilon\rho^{-2}\cos (2\epsilon\ts) k_{2\tth}(t_2T_{2t_1}-t_1T_{2t_2})-4\epsilon^2 t_1(t_2\xi_{\tth t_1}-t_1\xi_{\tth t_2})\rho^{-5}\cos (2\epsilon\ts)\\
&+2\epsilon^2 \rho^{-2}\cos (2\epsilon\ts) (f_2\xi_{t_1}-f_1\xi_{t_2})+4t_2\epsilon^2\rho^{-7}\cos2\epsilon\ts(t_2\xi_{\ts t_1}-t_1\xi_{\ts t_2})-2\epsilon^2\rho^{-6}(f_{1s}\xi_{\ts t_1}+f_{2s}\xi_{\ts t_2})\\
&-2\epsilon^2\rho^{-2}(f_{1\theta}\xi_{\tth t_1}+f_{2\theta}\xi_{\tth t_2})+(2(t_1+\epsilon f_1)\rho^{-9}+3\tilde r^2\rho^{-12}\epsilon)\epsilon\xi_{\ts\ts}+4(t_2+\epsilon f_2)\epsilon\rho^{-7}\xi_{\ts\tth}\\
&+(-2(t_2+\epsilon f_2)\rho^{-5}+3\tilde r^2\epsilon \rho^{-8})\epsilon\xi_{\tth\tth}+8t_1\epsilon^2\rho^{-7}\cos^2 (2\epsilon\ts) \xi_{\ts}+4t_2\epsilon^2\rho^{-5}\cos (2\epsilon\ts) \xi_{\tth}+O(\epsilon^3\rho^{-4}).
\end{aligned}
\end{equation*}

Denote
\begin{equation*}
\begin{aligned}
Q_1:=&-4\epsilon^2 t_1(t_2\xi_{\tth t_1}-t_1\xi_{\tth t_2})\rho^{-5}\cos (2\epsilon\ts)+2\epsilon^2 \rho^{-2}\cos (2\epsilon\ts) (f_2\xi_{t_1}-f_1\xi_{t_2})\\
&+4t_2\epsilon^2\rho^{-7}\cos2\epsilon\ts(t_2\xi_{\ts t_1}-t_1\xi_{\ts t_2})-2\epsilon^2\rho^{-6}(f_{1s}\xi_{\ts t_1}+f_{2s}\xi_{\ts t_2})-2\epsilon^2\rho^{-2}(f_{1\theta}\xi_{\tth t_1}+f_{2\theta}\xi_{\tth t_2})\\
&+2t_1\epsilon\rho^{-9}\xi_{\ts\ts}+4t_2\epsilon\rho^{-7}\xi_{\ts\tth}-2t_2\epsilon\rho^{-5}\xi_{\tth\tth}+8t_1\epsilon^2\rho^{-7}\cos^2 (2\epsilon\ts) \xi_{\ts}+4t_2\epsilon^2\rho^{-5}\cos (2\epsilon\ts) \xi_{\tth}.
\end{aligned}
\end{equation*}
We see that $Q_1$ is orthogonal to both $T_1$ and $T_2$. Then we test $T_1(t_1,t_2)$ and integrate by parts in $(t_1,t_2)$. Note that
\begin{equation*}
\begin{aligned}
&\Re\int_{\R^2}(t_1T_{1t_1}+t_2T_{1t_2})\overline{T_1}=\frac12\int_{\R^2}(t_1,t_2)\cdot \nabla |T_1|^2=-\int_{\R^2}|T_1|^2,
\end{aligned}
\end{equation*}

\begin{equation*}
\begin{aligned}
&\Re\int_{\R^2}(t_2^2T_{1t_2t_2}+t_1^2T_{1t_1t_1}-2t_1t_2T_{1t_1t_2})\overline{T_1}=\Re\int_{\R^2}(t_2\partial_{t_1}-t_1\partial_{t_2})^2T_1\cdot\overline{T_1}+(t_1\partial_{t_1}+t_2\partial_{t_2})T_1\cdot \overline{T_1}\\
=&\Re\int_{\R^2}\partial_{\tilde\phi}^2T_1\cdot \overline{T_1}-|T_1|^2=-3\pi\int_{0}^{+\infty}\tilde r(U')^2+\frac1r U^2(1-V)^2 dr+4\pi \int_{0}^{+\infty}UU'(1-V)dr,
\end{aligned}
\end{equation*}

\begin{equation*}
\begin{aligned}
&\Re\int_{\R^2}(t_2T_{1t_1}-t_1T_{1t_2})\overline{T_1}=\frac12\int_{\R^2}(t_2,-t_1)\cdot \nabla |T_1|^2=0,
\end{aligned}
\end{equation*}

\begin{equation*}
\begin{aligned}
&\Re\int_{\R^2}(t_1T_{2t_1}+t_2T_{2t_2})\overline{T_1}=\Re\int_{\R^2}\tilde r\partial_{\tilde r}T_2\cdot \overline{T_1}=0,
\end{aligned}
\end{equation*}

\begin{equation*}
\begin{aligned}
&\Re\int_{\R^2}(t_2^2T_{2t_2t_2}+t_1^2T_{2t_1t_1}-2t_1t_2T_{2t_1t_2})\overline{T_1}=\Re\int_{\R^2}(t_2\partial_{t_1}-t_1\partial_{t_2})^2T_2\cdot\overline{T_1}+(t_1\partial_{t_1}+t_2\partial_{t_2})T_2\cdot \overline{T_1}\\
=&\Re\int_{\R^2}\partial_{\tilde\phi}^2T_2\cdot \overline{T_1}=0,
\end{aligned}
\end{equation*}

\begin{equation*}
\begin{aligned}
&\Re\int_{\R^2}(t_2T_{2t_1}-t_1T_{2t_2})\overline{T_1}=-\Re\int_{\R^2}\partial_{\tilde\phi}T_2\cdot \overline{T_1}=2\pi\int_{0}^{+\infty}[\tilde r(U')^2+\frac{1}{\tilde r}U^2(1-V)^2-2UU'(1-V)]d\tilde r.
\end{aligned}
\end{equation*}

Combining the integrals above, we obtain
\begin{equation}\label{Int-Deltaxi}
\begin{aligned}
\Re\int_{(a,b)\in \Sigma_\epsilon}-\Delta\xi\cdot \overline{T_1}
=&\Re \int_{\R^2}-(T_{1t_1t_1}+T_{1t_2t_2})k_1\overline{T_1}-(T_{2t_1t_1}+T_{2t_2t_2})k_2\overline{T_1}\\
-&[\Delta_{\Gamma_\epsilon}k_1-2\epsilon\rho^{-2}\cos (2\epsilon\ts) k_{2\tth}+2\epsilon^2  \rho^{-2}(\rho^{-4}-\cos^2 (2\epsilon\ts))k_1+O(\epsilon^2\nabla^2_{\Ge}k_1)]\int_{\R^2}|T_1|^2\\
-&4(\epsilon\rho^{-2}\cos (2\epsilon\ts) k_{2\tth}+\epsilon^2\rho^{-2}\cos^2 (2\epsilon\ts) k_1)\pi\int_{0}^{+\infty}UU'(1-V)d\tilde r+O(\epsilon^3\rho^{-4}).
\end{aligned}
\end{equation}

By \eqref{Laplace2Expansion}, we have
\begin{equation*}
d^*A\xi=d^*B_0(k_1T_1+k_2T_2)+O(\epsilon^3).
\end{equation*}
Likewise, we test $T_1(t_1,t_2)$ and integrate by parts in $(t_1,t_2)$ to get
\begin{equation}\label{IntdstrAxi}
\begin{aligned}
    \Re\int_{\R^2}-id^*A\xi\overline{T_1}=\Im\int_{\R^2}d^*B_0(k_1T_1+k_2T_2)\overline{T_1}+O(\epsilon^3\rho^{-4})=k_2\Im\int_{\R^2}d^*B_0 T_2\overline{T_1}+O(\epsilon^3\rho^{-4}).
\end{aligned}
\end{equation}

By \eqref{Laplace3Expansion}, there holds
\begin{equation*}
\begin{aligned}
\langle A,d\xi\rangle&=A_3\xi_a+A_4\xi_b-\epsilon^2\rho^{-2}\cos^2 (2\epsilon\ts) (b\xi_a-a\xi_b)-\epsilon \rho^{-2} \cos (2\epsilon\ts) \xi_{\tth}\\
&-\epsilon^2\rho^{-6}(2t_2\rho^{-1}\cos (2\epsilon\ts)+A_3 f_{1s}+A_4 f_{2s})\xi_{\ts}+\epsilon^2\rho^{-2}(2t_1\rho^{-1}\cos (2\epsilon\ts)-A_3 f_{1s}-A_4 f_{2s})\xi_{\tth}\\
&+\langle B_0,d\xi\rangle+O(\epsilon^3\rho^{-4})\\
&=:A_3\xi_a+A_4\xi_b-\epsilon^2\rho^{-2}\cos^2 (2\epsilon\ts) (b\xi_a-a\xi_b)-\epsilon \rho^{-2} \cos (2\epsilon\ts) \xi_{\tth}+Q_2+\langle B_0,d\xi\rangle+O(\epsilon^3\rho^{-4}).    
\end{aligned}
\end{equation*}
We find that $Q_2$ is orthogonal to both $T_1$ and $T_2$. Then we test $T_1(t_1,t_2)$ and integrate by parts in $(t_1,t_2)$ and deduce that
\begin{equation}\label{IntAdxi}
\begin{aligned}
\Re\int_{\R^2}2i\langle A,d\xi\rangle\overline{T_1}=&\Re\int_{\R^2}2i(A_3T_{1t_1}+A_4T_{1t_2})k_1\overline{T_1}+2i(A_3T_{2t_1}+A_4T_{2t_2})k_2\overline{T_1}\\
&-2\epsilon^2\rho^{-2}\cos^2 (2\epsilon\ts) k_1 \Im\int_{\R^2}(t_2T_{1t_1}-t_1T_{1t_2})\overline{T_1}\\
&-2\epsilon^2\rho^{-2}\cos^2 (2\epsilon\ts) k_2 \Im\int_{\R^2}(t_2T_{2t_1}-t_1T_{2t_2})\overline{T_1}\\
&-2\epsilon\rho^{-2}\cos (2\epsilon\ts) k_{2\tth}\Im\int_{\R^2}T_2\overline{T_1}-2\Im\int_{\R^2}\langle B_0,d\xi\rangle \overline{T_1}+O(\epsilon^3)\\
=&\Re\int_{\R^2}2i(A_3T_{1t_1}+A_4T_{1t_2})k_1\overline{T_1}+2i(A_3T_{2t_1}+A_4T_{2t_2})k_2\overline{T_1}\\
&-2\epsilon^2\rho^{-2}\cos^2 (2\epsilon\ts) k_1\pi\int_{0}^{+\infty}\tilde r(U')^2+\frac{1}{\tilde r} U^2(1-V)^2-2UU'(1-V) d\tilde r\\
&+4\epsilon\rho^{-2} \cos (2\epsilon\ts) k_{2\tth}\pi\int_{0}^{+\infty} UU'(1-V)d\tilde r-2\Im\int_{\R^2}\langle B_0,d\xi\rangle \overline{T_1}+O(\epsilon^3).
\end{aligned}
\end{equation}

Applying \eqref{Laplace4Expansion}, we have
\begin{equation*}
|A|^2\xi=(A_3^2+A_4^2)(k_1T_1+k_2T_2)+\epsilon^2\rho^{-2}\cos^2 (2\epsilon\ts)(k_1T_1+k_2T_2)+(|A_0+B_0|^2-|A_0|^2)\xi+O(\epsilon^3).
\end{equation*}
Then we test $T_1(t_1,t_2)$ and integrate by parts in $(t_1,t_2)$:
\begin{equation}\label{IntA2xi}
\begin{aligned}
\Re\int_{\R^2}|A|^2\xi \overline{T_1}
=&\Re\int_{\R^2}(A_3^2+A_4^2)(k_1|T_1|^2+k_2T_2\overline{T_1})+\epsilon^2\rho^{-2}\cos^2 (2\epsilon\ts)\Re\int_{\R^2}(k_1|T_1|^2+k_2T_2\overline{T_1})\\
&+\Re\int_{\R^2}(|A_0+B_0|^2-|A_0|^2)(k_1|T_1|^2+k_2T_2\overline{T_1})+O(\epsilon^3)\\
=&k_1\Re\int_{\R^2}(A_3^2+A_4^2)|T_1|^2+\epsilon^2\rho^{-2}\cos^2 (2\epsilon\ts) \cdot \pi\int_{0}^{+\infty}\tilde r(U')^2+\frac{1}{\tilde r}U^2(1-V)^2 d\tilde r\\
&+k_1\Re\int_{\R^2}(|A_0+B_0|^2-|A_0|^2)|T_1|^2+O(\epsilon^2).
\end{aligned}
\end{equation}

Combining \eqref{Int-Deltaxi}, \eqref{IntdstrAxi}, \eqref{IntAdxi} and \eqref{IntA2xi}, we get
\begin{equation}\label{Int-DeltaAxi}
\begin{aligned}
&\Re\int_{(a,b)\in \Sigma_\epsilon}-\Delta_A\xi\cdot \overline{T_1}
=\Re \int_{\R^2}[-T_{1t_1t_1}-T_{1t_2t_2}+2i(A_3T_{1t_1}+A_4T_{1t_2})+(A_3^2+A_4^2)T_1]k_1\overline{T_1}\\
-&[\Delta_{\Gamma_{\epsilon}}k_1-2\epsilon\rho^{-2}\cos (2\epsilon\ts)k_{2\theta}+\epsilon^2\rho^{-2}(2\rho^{-4}-\cos^2 (2\epsilon \ts))k_1+O(\epsilon^2\nabla^2_{\Ge}k_1)]\int_{\R^2}|T_1|^2\\
+&k_2\Im\int_{\R^2}d^*B_0 T_2\overline{T_1}-2\Im\int_{\R^2}\langle B_0,d\xi\rangle \overline{T_1}+\Re\int_{\R^2}(|A^{(1)}+B_0|^2-|A^{(1)}|^2)(k_1|T_1|^2+k_2T_2\overline{T_1})+O(\epsilon^3).
\end{aligned}
\end{equation}

\textit{Step 2: The integral of $2i\langle B, \nabla_A\psi \rangle$.}

Note that $\nabla_{A}\psi=d\psi-iA\psi=\nabla_{A_0}\psi_0+\nabla_{A_0}\eta_0-iB_0\psi_0-B_0\eta_0$. Then we have
\begin{equation*}
\begin{aligned}
\nabla_{A_0}\psi_0=d\psi_0-iA_0\psi_0=&(-\epsilon^2 f_s T_1-\epsilon^2 g_s T_2)d\ts+(i(1-V)\epsilon\cos (2\epsilon\ts)\psi_0-\epsilon^2 f_\theta T_1-\epsilon^2 g_\theta T_2)d\tth\\
&+T_1 da+T_2 db.
\end{aligned}
\end{equation*}
From this we infer
\begin{equation*}
\begin{aligned}
\langle B,\nabla_{A_0}\psi_0\rangle
&=k_1\langle T_{B_1},\nabla_{A_0}\psi_0\rangle+k_2\langle T_{B_2},\nabla_{A_0}\psi_0\rangle=k_1\frac{V'}{\tilde r}T_2+k_2\frac{V'}{\tilde r}T_1+O(\epsilon^3\rho^{-4}e^{-\delta r}).
\end{aligned}
\end{equation*}
Then we test $T_1(t_1,t_2)$ and integrate in $(t_1,t_2)$ to get
\begin{equation}\label{IntBDApsi}
\begin{aligned}
\Re\int_{\R^2}2i\langle B,\nabla_{A_0}\psi_0\rangle\overline{T_1}
=&\Re\int_{\R^2}2ik_1\frac{V'}{\tilde r}T_2\overline{T_1}+2ik_2\frac{V'}{\tilde r}|T_1|^2+O(\epsilon^3\rho^{-4}).
\end{aligned}
\end{equation}

Combining the integrals above, we get
\begin{equation*}
\begin{aligned}
\Re\int_{\R^2}2i\langle B,\nabla_{A}\psi\rangle \overline{T_1}
=&\Re\int_{\R^2}2ik_1(\partial_{t_2} A_3-\partial_{t_1}A_4)|T_1|^2+\Re\int_{\R^2}2i\langle B, \nabla\eta_0-iA_0\eta_0-iB_0\psi_0-B_0\eta_0\rangle\overline{T_1}+O(\epsilon^3\rho^{-4}).
\end{aligned}
\end{equation*}

\textit{Step 3: Conclusion.}

Observe that $T_1$ satisfies
\begin{equation*}
-\Delta_{\R^2}T_1+2i A^{(1)}\cdot T_1+|A^{(1)}|^2T_1+2i(\partial_{t_2} A_3-\partial_{t_1}A_4)T_1+\frac12 (\lambda-1)(\psi^{(1)})^2\bar T_1+(\lambda+\frac12)|\psi^{(1)}|^2 T_1-\frac\lambda 2 T_1=0\text{ in }\R^2.
\end{equation*}
Combining it with \eqref{Int-DeltaAxi} and \eqref{IntBDApsi}, we obtain the desired result.
\end{proof}

The inner product of the second components of $\mathbb{L}(v;\psi,A)$ and $\mathcal{T}_2$ can also be computed as follows.

\begin{lemma}\label{InnerProduct2}
\begin{equation*}
\begin{aligned}
\int_{\R^2}\langle (\mathbb L(v;\psi,A))_2, T_{B_1}\rangle
=&-[\Delta_{\Ge}k_1-2\epsilon\rho^{-2}\cos 2sk_{2\tth}-\epsilon^2\rho^{-2}\cos^2 (2\epsilon\ts) k_1+O(\epsilon^2\nabla^2_{\Ge}k_1)]\int_{\R^2}|T_{B_1}|^2+O(\epsilon^3\rho^{-4})
\end{aligned}
\end{equation*}
\end{lemma}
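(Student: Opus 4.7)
The plan is to proceed in complete parallel to the proof of Lemma \ref{InnerProduct1}, but with the scalar-Laplacian manipulations replaced by Hodge-Laplacian manipulations on one-forms. Writing $B = k_1(\ts,\tth) T_{B_1}(t_1,t_2) + k_2(\ts,\tth) T_{B_2}(t_1,t_2)$, the second component of $\mathbb L(v;\psi,A)$ equals $\Delta_H B + 2\Im(\overline{\nabla_A\psi}\cdot \xi) + B|\psi|^2$. I would expand each of these three pieces in the Fermi coordinates and then test against $T_{B_1}$ in the $(t_1,t_2)$-plane.

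First I would apply Lemma \ref{d*d} together with the explicit expansion of $(g^{ij})$ recorded in Section~2 to expand $\Delta_H B = (d^*d+dd^*)B$. The leading (in $\epsilon$) part reproduces the 2D flat Hodge Laplacian acting on $T_{B_1}$ and $T_{B_2}$. The terms involving second $(\ts,\tth)$-derivatives of $k_i$, arising from $g^{11}\partial_{\ts}^2$ and $g^{22}\partial_{\tth}^2$, should assemble into the $\Delta_{\Ge}k_1$ contribution weighted by $\int|T_{B_1}|^2$. The off-diagonal metric entries $\pm a\epsilon\cos 2\epsilon\ts$ and $\pm b\epsilon\cos 2\epsilon\ts$, which couple $\partial_{\tth}$ with $\partial_{t_1},\partial_{t_2}$, should (after integration by parts in $(t_1,t_2)$ and use of the explicit forms of $T_{B_1}, T_{B_2}$) produce the cross term $-2\epsilon\rho^{-2}\cos 2s\, k_{2\tth}$. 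The potential-type coefficient $-\epsilon^2\rho^{-2}\cos^2 2\epsilon\ts\, k_1$ should come from acting with the diagonal $g^{ii}$ on the $d\tth$-component of $T_{B_1}$, which carries the prefactor $\epsilon\cos 2\epsilon\ts$.

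Next I would expand the cross-coupling $2\Im(\overline{\nabla_A\psi}\cdot \xi)$ and $B|\psi|^2$, using $\psi = \psi_0 + O(\epsilon^2\rho^{-4}e^{-\delta\tilde r})$ and the explicit decomposition of $\nabla_{A_0}\psi_0$ already recorded in Step~2 of the proof of Lemma \ref{InnerProduct1}. The crucial cancellation is that the pair $(T_1, T_{B_1})$ lies in the kernel of $\mathbb L(\cdot;\psi^{(1)}, A^{(1)})$ on $\R^2$; differentiating the planar vortex equations in the $t_1$-direction yields
\begin{equation*}
\Delta_H^{\R^2} T_{B_1} + 2\Im(\overline{\nabla_{A^{(1)}}\psi^{(1)}}\cdot T_1) + |\psi^{(1)}|^2 T_{B_1} = 0\quad\text{in }\R^2,
\end{equation*}
and analogously for $(T_2, T_{B_2})$. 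After integration in $(t_1,t_2)$, the leading-order $k_1$ and $k_2$ contributions from the three pieces should cancel against each other via this 2D kernel identity. The residual $k_2$ cross terms are controlled using the orthogonality $\int_{\R^2}\langle T_{B_1}, T_{B_2}\rangle = O(\epsilon^3)$ and the equality $\int |T_{B_1}|^2 = \int |T_{B_2}|^2$ listed in Section~2.

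The main obstacle I anticipate is the careful bookkeeping in the first step. Unlike the scalar Laplacian, $\Delta_H$ couples all four components of $B$ through the metric derivatives; one must compute each of $(\Delta_H B)_{\ts}, (\Delta_H B)_{\tth}, (\Delta_H B)_{t_1}, (\Delta_H B)_{t_2}$ separately and take the inner product with $T_{B_1}$ via the full metric $g^{ij}$, tracking which terms contribute at the kept orders and which are absorbed in the $O(\epsilon^3\rho^{-4})$ remainder. The fact that the final coefficient of $k_1$ comes out to exactly $-\epsilon^2\rho^{-2}\cos^2 2\epsilon\ts$, rather than $\epsilon^2\rho^{-2}(2\rho^{-4}-\cos^2 2\epsilon\ts)$ as in Lemma \ref{InnerProduct1}, reflects the fact that the ``curvature'' $2\rho^{-6}$ contribution to the Jacobi operator $L_\Gamma$ is carried entirely through the scalar-field side (via the $2\rho^{-4}$ enhancement of $g^{11}$ acting on $T_1$); when the two lemmas are then combined in Proposition \ref{TestKernel} the full Jacobi operator will emerge at the expected leading order.
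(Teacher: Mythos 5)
Your plan matches the paper's proof step for step: the paper expands $\Delta_H B$ via Lemma~\ref{d*d} and the explicit metric, tests against $T_{B_1}$ exploiting that the sub-leading pieces ($Q_3\,da$, $Q_5\,db$, etc.) are orthogonal to $T_{B_1}, T_{B_2}$, and then cancels the leading $k_1$-multiplier across the three terms of $(\mathbb L(v;\psi,A))_2$ by invoking the $\tilde r$-derivative of the second ODE in \eqref{ODEsystem}, which is precisely the component form of the 2D kernel identity you write down. One caution: $T_{B_1}$ itself carries an $O(\epsilon)$ $d\tth$-component, so it is only the planar part $\frac{V'}{\tilde r}\,dt_2$ that actually satisfies your displayed identity (the paper's kernel of $\mathbb L(\cdot;\psi^{(1)},A^{(1)})$ on $\R^2$ is $\widetilde{\mathcal T}_1 = \mathcal T_1 - (0,\cos 2s\cos\tilde\phi\,V'd\theta)$); the paper sidesteps this by quoting the scalar ODE directly, and you would need to carry that $O(\epsilon)$ $d\tth$-piece separately in the bookkeeping.
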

\begin{proof}
Recall that
\begin{equation*}
\mathbb L(v;\psi,A))_2=\Delta_H B+2Im(\overline{\nabla_A\psi}\cdot \xi)+B|\psi|^2.
\end{equation*}
In the following, we will test $T_{B1}$ and compute the integrals term by term.

Firstly, by Lemma \ref{d*d}, we have
\begin{equation*}
\begin{aligned}
&\Delta_H B
=d^*dB+dd^*B=-\frac{1}{\sqrt{G}}g_{ml}\partial_j(\sqrt{G}g^{ij}g^{kl}B_{ik})dy^m-d\left(-\frac{1}{\sqrt{G}}\partial_j(\sqrt{G}g^{ij}B_i)\right)\\
=&O(\epsilon^2)d\ts+\left[\frac{2}{r^2}\epsilon(k_1\cos\tilde\phi+k_2\sin\tilde\phi)\cos (2\epsilon\ts) \left(\frac{V''}{\tilde r}-\frac{V'}{\tilde r}\right)+O(\epsilon^2)\right]d\tth+O(\epsilon^3)da+O(\epsilon^3)db\\
+&\left[(\Delta_{\Gamma_{\epsilon}}k_2-2\epsilon\rho^{-2}\cos (2\epsilon \ts) k_{1\tth})\frac{V'}{\tilde r}+k_2\left(\frac{V'''}{\tilde r}-\frac{V''}{\tilde r^2}+\frac{V'}{\tilde r}-\epsilon^2 \rho^{-2}\cos^2 (2\epsilon\ts) \frac{V'}{\tilde r}-2\epsilon^2 \rho^{-6} V''\right)+Q_3+Q_4\right]da\\
+&\left[(-\Delta_{\Gamma_{\epsilon}}k_1+2\epsilon\rho^{-2}\cos (2\epsilon\ts) k_{2\tth})\frac{V'}{\tilde r}-k_1\left(\frac{V'''}{\tilde r}-\frac{V''}{\tilde r^2}+\frac{V'}{\tilde r}-\epsilon^2 \rho^{-2}\cos^2 (2\epsilon\ts) \frac{V'}{\tilde r}-2\epsilon^2 \rho^{-6} V''\right)+Q_5+Q_6\right]db,
\end{aligned}
\end{equation*}
where
\begin{equation*}
\begin{aligned}
Q_3:=&2\epsilon^2 \rho^{-2}\cos (2\epsilon\ts) (f_2k_2\cos\tilde\phi-f_1k_2\sin\tilde\phi)\left(\frac{V'}{\tilde r}\right)^\prime-2\epsilon^2\rho^{-6}(f_{1s}k_{2\ts }\cos+f_{2s}k_{2\ts}\sin\tilde\phi)\left(\frac{V'}{\tilde r}\right)^\prime\\
&-2\epsilon^2\rho^{-2}(f_{1\theta}k_{2\tth}\cos\tilde\phi+f_{2\theta}k_{2\tth}\sin\tilde\phi)\left(\frac{V'}{r}\right)^\prime\\
&+(2t_1\epsilon\rho^{-9}k_{2\ts\ts}+4t_2\epsilon\rho^{-7}k_{2\ts\tth}-2t_2\epsilon\rho^{-5}k_{2\tth\tth}+8t_1\epsilon^2\rho^{-7}\cos^2 (2\epsilon\ts) k_{2\ts}+4t_2\epsilon^2\rho^{-5}\cos (2\epsilon\ts) k_{2\tth})\frac{V'}{\tilde r},
\end{aligned}
\end{equation*}
\begin{equation*}
\begin{aligned}
Q_4:=&[(2f_1\rho^{-9}+3\tilde r^2\rho^{-12}) k_{2\ts\ts}+4 f_2\rho^{-7}k_{2\ts\tth}+(-2 f_2\rho^{-5}+3\tilde r^2 \rho^{-8}) k_{2\tth\tth}]\epsilon^2\frac{V'}{\tilde r},
\end{aligned}
\end{equation*}
\begin{equation*}
\begin{aligned}
Q_5:=&-2\epsilon^2 \rho^{-2}\cos (2\epsilon\ts) (f_2k_1\cos\tilde\phi-f_1k_1\sin\tilde\phi)\left(\frac{V'}{\tilde r}\right)^\prime+2\epsilon^2\rho^{-6}(f_{1s}k_{1\ts }\cos+f_{2s}k_{1\ts}\sin\tilde\phi)\left(\frac{V'}{\tilde r}\right)^\prime\\
&+2\epsilon^2\rho^{-2}(f_{1\theta}k_{1\tth}\cos\tilde\phi+f_{2\theta}k_{1\tth}\sin\tilde\phi)\left(\frac{V'}{r}\right)^\prime\\
&-(2t_1\epsilon\rho^{-9}k_{1\ts\ts}+4t_2\epsilon\rho^{-7}k_{1\ts\tth}-2t_2\epsilon\rho^{-5}k_{1\tth\tth}+8t_1\epsilon^2\rho^{-7}\cos^2 (2\epsilon\ts) k_{1\ts}+4t_2\epsilon^2\rho^{-5}\cos (2\epsilon\ts) k_{1\tth})\frac{V'}{\tilde r},
\end{aligned}
\end{equation*}
\begin{equation*}
\begin{aligned}
Q_6:=&-[(2f_1\rho^{-9}+3\tilde r^2\rho^{-12}) k_{1\ts\ts}+4 f_2\rho^{-7}k_{1\ts\tth}+(-2 f_2\rho^{-5}+3\tilde r^2 \rho^{-8}) k_{1\tth\tth}]\epsilon^2\frac{V'}{\tilde r}.
\end{aligned}
\end{equation*}
Note that $Q_3 da$ and $Q_5 db$ are orthogonal to both $T_{B_1}$ and $T_{B_2}$ up to $O(\epsilon^2)$. It follows that
\begin{equation}\label{IntDeltaHB}
\begin{aligned}
&\int_{\R^2}\langle \Delta_H B,T_{B_1}\rangle
=-k_1\int_{0}^{+\infty}\int_{0}^{2\pi}\frac{V'}{\tilde r}\left(\frac{V'''}{\tilde r^2}-\frac{V''}{\tilde r^3}+\frac{V'}{r^4}\right)d\tilde \phi d\tilde r+4\pi \epsilon^2\rho^{-6} k_1\int_{0}^{+\infty}V'V'' d\tilde r\\
+&2(-\Delta_{\Gamma_{\epsilon}}k_1+2\epsilon\rho^{-2}\cos 2s k_{2\tth}+\epsilon^2\rho^{-2}\cos^2 2s k_1+O(\epsilon^2\nabla^2_{\Ge}k_1))\pi \int_{0}^{+\infty}\frac{(V')^2}{\tilde r}d\tilde r+O(\epsilon^3)\\
=&-2k_1\pi\int_{0}^{+\infty}\frac{V'}{\tilde r}\left(\frac{V'''}{\tilde r^2}-\frac{V''}{\tilde r^3}+\frac{V'}{\tilde r^4}\right)d\tilde r\\
+&2(-\Delta_{\Gamma_{\epsilon}}k_1+2\epsilon\rho^{-2}\cos 2s k_{2\tth}+\epsilon^2\rho^{-2}\cos^2 2s k_1+O(\epsilon^2\nabla^2_{\Ge}k_1))\pi \int_{0}^{+\infty}\frac{(V')^2}{\tilde r}d\tilde r+O(\epsilon^3\rho^{-4}).
\end{aligned}
\end{equation}

To calculate the second term, we decompose $\nabla_{A}\psi=\nabla_{A_0}\psi_0+\nabla_{A_0}\eta_0-iB_0\psi_0-B_0\eta_0$. And we have
\begin{equation*}
\begin{aligned}
\langle 2\Im(\overline{\nabla_{A_0}\psi_0}\cdot \xi), T_{B_1}\rangle
=&2k_1\Im \langle T_1\overline{\nabla_{A_0}\psi_0}, T_{B_1}\rangle+2k_2\Im \langle T_2\overline{\nabla_{A_0}\psi_0}, T_{B_1}\rangle=2k_1\frac{V'}{\tilde r}\Im(T_1 \overline{T_2})+O(\epsilon^3).
\end{aligned}
\end{equation*}
Integrating in $(t_1,t_2)$, we have
\begin{equation*}
\begin{aligned}
\int_{\R^2}\langle 2\Im(\overline{\nabla_{A_0}\psi_0}\cdot \xi), T_{B_1}\rangle
=&-4k_1\pi\int_{0}^{+\infty}\frac{1}{\tilde r}UU'V'(1-V)d\tilde r+O(\epsilon^3\rho^{-4})
\end{aligned}
\end{equation*}
and hence,
\begin{equation}\label{IntImDApsi}
\begin{aligned}
\int_{\R^2}\langle 2\Im(\overline{\nabla_A\psi}\cdot \xi), T_{B_1}\rangle
&=-4k_1\pi\int_{0}^{+\infty}\frac{1}{\tilde r}UU'V'(1-V)d\tilde r+\int_{\R^2}\Im\langle 2(\nabla_{A_0}\eta_0-iB_0\psi_0-B_0\eta_0)\xi, T_{B_1}\rangle.
\end{aligned}
\end{equation}

Finally, direct computations give
\begin{equation*}
\begin{aligned}
\langle B|\psi_0|^2, T_{B_1}\rangle
&=k_1U^2|T_{B_1}|^2+k_2U^2\langle T_{B_2}, T_{B_1}\rangle+O(\epsilon^3\rho^{-4}).
\end{aligned}
\end{equation*}
Integrating in $(t_1,t_2)$ yields
\begin{equation}\label{IntBpsi2}
\begin{aligned}
\int_{\R^2}\langle B|\psi|^2, T_{B_1}\rangle=2\pi k_1\int_{0}^{+\infty}U^2\frac{V'^2}{\tilde r}d\tilde r+\int_{\R^2}\langle B(|\psi|^2-|\psi_0|^2), T_{B_1}\rangle+O(\epsilon^3\rho^{-4}).
\end{aligned}
\end{equation}

Taking derivative on the second equation in \eqref{ODEsystem}, we get
\begin{equation*}
-V'''-\frac{V''}{\tilde r}+\frac{V'}{r^2}-2UU'(1-V)+U^2V'=0.
\end{equation*}
Combining it with \eqref{IntDeltaHB}, \eqref{IntImDApsi} and     \eqref{IntBpsi2}, we obtain the desired result.
\end{proof}

With Lemma \ref{InnerProduct1} and Lemma \ref{InnerProduct2}, we have the following rough result.
\begin{lemma}
\begin{equation}\label{RoughProj}
\begin{aligned}
    &\int_{(a,b)\in \Sigma_\epsilon}\langle \mathbb L(v;\psi,A), \mathcal T_1\rangle dt_1dt_2=\Re\int_{(a,b)\in \Sigma_\epsilon} \mathbb L(v;\psi,A)_1 \overline{T_1} dt_1dt_2+\int_{(a,b)\in \Sigma_\epsilon}\langle \mathbb L(v;\psi,A)_2, T_{B_1}\rangle dt_1dt_2\\
    =&-[\Delta_{\Gamma_{\epsilon}}k_1-2\epsilon\rho^{-2}\cos 2sk_{2\tth}+\epsilon^2\rho^{-2}(2\rho^{-4}-\cos^2 (2\epsilon\ts))k_1+O(\epsilon^2\nabla^2_{\Ge}k_1)]\int_{\R^2}|T_1|^2+|T_{B_1}|^2\\
    +&4\epsilon^2 \rho^{-6} k_1\pi\int_{0}^{+\infty}\frac
    {(V')^2}{\tilde r}d \tilde r+k_2\Im\int_{\R^2}d^*B_0 T_2\overline{T_1}
-2\Im\int_{\R^2}\langle B_0,d\xi\rangle \overline{T_1}
+\Re\int_{\R^2}(|A^{(1)}+B_0|^2-|A^{(1)}|^2)k_1|T_1|^2\\
+&\Re\int_{\R^2}2i\langle B, \nabla_{A_0}\eta_0-iB_0\psi_0-B_0\eta_0\rangle\overline{T_1}+\Re\int_{\R^2}[\frac12 (\lambda-1)(\psi^2-\psi_0^2)\bar\xi+(\lambda+\frac12)(|\psi|^2-|\psi_0|^2)\xi]\overline{T_1}\\
+&\int_{\R^2}\Im\langle 2(\nabla_{A_0}\eta_0-iB_0\psi_0-B_0\eta_0)\cdot \xi, T_{B_1}\rangle+\int_{\R^2}\langle B(|\psi|^2-|\psi_0|^2), T_{B_1}\rangle+O(\epsilon^3(|\partial_{ij}k|+|\partial_{i}k|+|k|)).
\end{aligned}
\end{equation}
\end{lemma}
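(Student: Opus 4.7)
\medskip\noindent\textbf{Proof proposal.} The claim is a direct bookkeeping consequence of combining Lemma \ref{InnerProduct1} and Lemma \ref{InnerProduct2}. The plan is to start from the definition of the normal-space inner product, which yields
\[
\int_{(a,b)\in \Sigma_\epsilon}\!\langle \mathbb L(v;\psi,A), \mathcal T_1\rangle\,dt_1dt_2 \;=\; \Re\!\int_{(a,b)\in \Sigma_\epsilon}\!\mathbb L(v;\psi,A)_1\,\overline{T_1}\,dt_1dt_2 \;+\; \int_{(a,b)\in \Sigma_\epsilon}\!\langle \mathbb L(v;\psi,A)_2, T_{B_1}\rangle\,dt_1dt_2,
\]
and then plug in the expansions from the two preceding lemmas, collecting terms in the order that they appear on the right-hand side of \eqref{RoughProj}.

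The $\Delta_{\Gamma_\epsilon}k_1$ terms and the $-2\epsilon\rho^{-2}\cos 2s\,k_{2\tth}$ terms appear with the same sign in both lemmas, so they factor out as $-[\Delta_{\Gamma_\epsilon}k_1-2\epsilon\rho^{-2}\cos 2s\,k_{2\tth}]$ multiplied by $\int_{\R^2}(|T_1|^2+|T_{B_1}|^2)$, and the $O(\epsilon^2\nabla^2_{\Ge}k_1)$ remainders lump together. The subtle point is the coefficient of $\epsilon^2\rho^{-2}\,k_1$: Lemma \ref{InnerProduct1} produces $\epsilon^2\rho^{-2}(2\rho^{-4}-\cos^2 2\epsilon\ts)\,k_1\,\int|T_1|^2$, whereas Lemma \ref{InnerProduct2} only produces $-\epsilon^2\rho^{-2}\cos^2 2\epsilon\ts\,k_1\,\int|T_{B_1}|^2$. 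To put the combined expression in the symmetric form $\epsilon^2\rho^{-2}(2\rho^{-4}-\cos^2 2\epsilon\ts)\,k_1\,\int(|T_1|^2+|T_{B_1}|^2)$, one must add and subtract $2\epsilon^2\rho^{-6}k_1\int|T_{B_1}|^2$; by the third identity of the norm lemma, $\int|T_{B_1}|^2=2\pi\int_0^\infty(V')^2/\tilde r\,d\tilde r+O(\epsilon^3)$, so this accounts exactly for the explicit correction $+4\epsilon^2\rho^{-6}k_1\pi\int_0^\infty(V')^2/\tilde r\,d\tilde r$ appearing in \eqref{RoughProj}.

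The remaining terms in the two lemmas are carried over verbatim: from Lemma \ref{InnerProduct1} one keeps $k_2\Im\int d^*B_0\,T_2\overline{T_1}$, $-2\Im\int\langle B_0,d\xi\rangle\overline{T_1}$, $\Re\int(|A^{(1)}+B_0|^2-|A^{(1)}|^2)k_1|T_1|^2$, the nonlinear pieces in $\eta_0$ coming from $2i\langle B,\nabla_A\psi\rangle$ and from the algebraic Ginzburg--Landau nonlinearity (writing $\psi=\psi_0+\eta_0$, $A=A_0+B_0$ and using the linearized equation satisfied by $T_1$ at $(\psi^{(1)},A^{(1)})$ to cancel the leading order), and from Lemma \ref{InnerProduct2} one keeps the analogous terms involving $T_{B_1}$. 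A handful of cross terms such as $k_2\Re\int(|A^{(1)}+B_0|^2-|A^{(1)}|^2)T_2\overline{T_1}$ are $O(\epsilon^3\rho^{-4})$ in view of the expansion $(\psi,A)=(\psi_0,A_0)-2\epsilon^2\rho^{-6}(\eta_1,B_1)+O(\epsilon^3\rho^{-4}e^{-\delta\tilde r})$ from the improved approximation subsection, and can be absorbed into the error.

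There is no conceptual obstacle here; the only real work is bookkeeping, and the main place where one must be careful is the asymmetric pairing described above between the $\int|T_1|^2$ contribution from the $\xi$-equation and the $\int|T_{B_1}|^2$ contribution from the $B$-equation. All other reductions are either direct substitutions from Lemma \ref{InnerProduct1}--Lemma \ref{InnerProduct2} or harmless $O(\epsilon^3)$ regroupings controlled by the bounds on $f_1,f_2$ in the $\|\cdot\|_{p,\beta}$ norms introduced in \eqref{pbetanorm}.
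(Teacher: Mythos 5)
Your proposal is correct and takes the same route as the paper, which indeed presents \eqref{RoughProj} as a direct consequence of Lemma \ref{InnerProduct1} and Lemma \ref{InnerProduct2} without a separate proof. In particular, your reconciliation of the $\epsilon^2\rho^{-2}k_1$ coefficients via $\int_{\R^2}|T_{B_1}|^2=2\pi\int_0^\infty (V')^2/\tilde r\, d\tilde r+O(\epsilon^3)$ correctly identifies the origin of the $4\epsilon^2\rho^{-6}k_1\pi\int_0^\infty(V')^2/\tilde r\,d\tilde r$ term, and you correctly note that the $(\eta_0,B_0)$-dependent terms in \eqref{RoughProj} are carried over from the proofs (Steps 2 and 3) of the two preceding lemmas rather than from their statements, with the cross term $k_2\Re\int(|A^{(1)}+B_0|^2-|A^{(1)}|^2)T_2\overline{T_1}$ absorbed into $O(\epsilon^3\rho^{-4})$ using $B_0=O(\epsilon^2\rho^{-6})$ and the angular orthogonality of $\Re(T_2\overline{T_1})$.
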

It remains to compute the terms from the nonlinear terms. From Section 2, we see that the improvement of approximation has the form:
\begin{equation*}
\begin{aligned}
(\eta_0,B_0)
=
-2\epsilon^2\rho^{-6}
(\eta_1,B_1)
+O(\epsilon^3\rho^{-4}e^{-\delta r})
\end{aligned}
\end{equation*}
where $(\eta_1,B_1)$ solves
\begin{equation}\label{NonlinearFixRHS12}
    \mathbb L(\eta_1,B_1;\psi^{(1)},A^{(1)})=
    \begin{pmatrix}
        \tilde rU'e^{i\tilde\phi}\\
        -\sin\tilde\phi V' dt_1+\cos\tilde\phi V' dt_2
    \end{pmatrix}
    \text{ in }\R^2.
\end{equation}

Taking $\partial_{t_i}$ on $\mathbb L$, we have for any $(\mu,C)$,
\begin{equation}\label{translationfix}
\begin{aligned}
&\partial_{t_i}\mathbb L(\mu,C;\psi_0,A_0)
=\mathbb L(\partial_{t_i}\mu,\partial_{t_i}C;\psi_0,A_0)
\\
&+
\begin{pmatrix}
2i\langle \partial_{t_i}A_0, \nabla_{A_0}\mu \rangle+2i\langle C, \nabla_{A_0}\partial_{t_i}\psi_0-i\partial_{t_i}A_0\psi_0\rangle+(\lambda-1)\psi_0\partial_{t_i}\psi_0 \overline{\mu}+(2\lambda+1)\Re(\overline{\psi_0}\partial_{t_i}\psi_0)\mu\\
2\Im[\overline{\nabla_{A_0}\partial_{t_i}\psi_0-i\partial_{t_i}A_0\psi_0}\cdot\mu]+2\Re(\overline{\psi_0}\partial_{t_i}\psi_0)C
\end{pmatrix}\\
&+\begin{pmatrix}
-\partial_{t_i}(\frac{1}{\sqrt{G}}\partial_k(\sqrt{G}g^{jk}))(\nabla_{A_0}\mu)_j-\partial_{t_i}g^{jk}\partial_{jk}\mu+2i\partial_{t_i}g^{jk}A_{0k}\partial_j\mu+\partial_{t_i}g^{jk}A_{0j}A_{0k}\mu+2i\partial_{t_i}g^{jk}C_j(\nabla_{A_0}\psi_0)_k\\
[-\partial_{t_i,m}(\frac{1}{\sqrt{G}}\partial_k(\sqrt{G}g^{jk}))C_j-\partial_{t_i,k}g^{jk}\partial_jC_m-\partial_{t_i}g^{jk}\partial_{jk}C_m-\partial_{t_i}(g_{mn}g^{jk}\partial_k g^{ln})C_{jl}]dy^m
\end{pmatrix}.
\end{aligned}
\end{equation}

Let us recall that $\mathbb L(\cdot ;\psi_0,A_0)$ is invariant under the following gauge transformation
\begin{equation*}
\mathbb L(\tilde G_{\gamma}(\xi,B);G_{\gamma}(\psi_0,A_0))=\tilde G_{\gamma}(\mathbb L(\xi,B))\text{ for any }\gamma \in C^1(\R^4).
\end{equation*}
Then we have
\begin{equation}\label{gaugefix}
\begin{aligned}
&\frac{d}{dt}|_{t=0}\tilde G_{t\gamma}(\mathbb L(\mu,C))=\frac{d}{dt}|_{t=0}\mathbb L(\tilde G_{t\gamma}(\mu,C);G_{t\gamma}(\psi_0,A_0))=\mathbb L(i\gamma\mu,C;\psi_0,A_0)\\
&+
\begin{pmatrix}
-id^*d\gamma \mu +2i\langle d\gamma, \nabla_{A_0}\mu\rangle+2i\langle C, \nabla_{A_0}(i\gamma\psi_0)-id\gamma \psi_0\rangle+(\lambda-1)i\gamma \psi_0^2\overline{\mu}\\
2\Im[\overline{\nabla_{A_0}(i\gamma \psi_0)-id\gamma \psi_0)}\cdot \mu]
\end{pmatrix}.
\end{aligned}
\end{equation}

Combining \eqref{translationfix}, \eqref{gaugefix} and taking $\gamma=A_3$ , for example, we have
\begin{equation*}
\begin{aligned}
&\partial_{t_i}\mathbb L(\mu,C;\psi_0,A_0)-\frac{d}{dt}|_{t=0}\tilde G_{tA_3}(\mathbb L(\mu,C))
=\mathbb L(\partial_{t_i}\mu,\partial_{t_i}C)-\mathbb L(iA_3\mu,C;\psi_0,A_0)\\
+&
\begin{pmatrix}
-id^*T_{B_1}\mu +2i\langle T_{B_1}, \nabla_{A_0}\mu\rangle+2i\langle C,\nabla_{A_0}T_{1}-iT_{B_1}\psi_0 \rangle+(\lambda-1)\psi_0 T_1 \overline{\mu}+(2\lambda+1)\Re(\overline{\psi_0}T_1)\mu\\
2\Im[\overline{\nabla_{A_0}T_1-iT_{B_1}\psi_0}\cdot \mu]+2\Re(\overline{\psi_0}T_1)C
\end{pmatrix}\\
+&\begin{pmatrix}
-\partial_{t_i}(\frac{1}{\sqrt{G}}\partial_k(\sqrt{G}g^{jk}))(\nabla_{A_0}\mu)_j-\partial_{t_i}g^{jk}\partial_{jk}\mu+2i\partial_{t_i}g^{jk}A_{0k}\partial_j\mu+\partial_{t_i}g^{jk}A_{0j}A_{0k}\mu+2i\partial_{t_i}g^{jk}C_j(\nabla_{A_0}\psi_0)_k\\
[-\partial_{t_i,m}(\frac{1}{\sqrt{G}}\partial_k(\sqrt{G}g^{jk}))C_j-\partial_{t_i,k}g^{jk}\partial_jC_m-\partial_{t_i}g^{jk}\partial_{jk}C_m-\partial_{t_i}(g_{mn}g^{jk}\partial_k g^{ln})C_{jl}]dy^m
\end{pmatrix}.
\end{aligned}
\end{equation*}

Plugging in $(\mu,C)=(\eta_1,B_1)$ and testing the above equation by $(T_1,T_{B_1})$, and integrating in $(t_1,t_2)$, we have
\begin{equation*}
\begin{aligned}
&-2\Im\int_{\R^2}\langle B_1, dT_1 \rangle+2\Re\int_{\R^2}\langle B_1, A_0\rangle|T_1|^2+\Re\int_{\R^2}2i\langle T_{B_1}, \nabla_{A_0}\eta_1-iB_1 \psi_0\rangle\overline{T_1}+O(\epsilon)\\
+&\Re \int_{\R^2} (\lambda-1)\psi_0\eta_1\overline{T_1}+(2\lambda+1)\psi_0\eta_1|T_1|^2+\int_{\R^2}\Im\langle 2(\nabla_{A_0}\eta_1-iB_i\psi_0)T_1, T_{B1} \rangle+2\int_{\R^2} \langle 2\psi_0 \eta_1 T_{B_1}, T_{B_1}\rangle\\
=&\int_{\R^2}\langle \partial_{t_1}L_0(\mu,C;\psi_0,A_0)-\frac{d}{dt}|_{t=0}\tilde G_{tA_3}(L_0(\mu,C)),(T_1, T_{B_1})\rangle=:E.
\end{aligned}
\end{equation*}

Plugging \eqref{NonlinearFixRHS12} into the right-hand side above, we can compute $E$ as
\begin{equation*}
E=\pi \int_{0}^{+\infty}\tilde r(\tilde rU''+U')U'+UU'(1-V)^2+\frac{(V')^2}{\tilde r}d\tilde r+O(\epsilon)=2\pi \int_{0}^{+\infty}\frac{(V')^2}{\tilde r}d\tilde r+O(\epsilon).
\end{equation*}
Then \eqref{RoughProj} can be represented as
\begin{equation*}
\begin{aligned}
    &\int_{(a,b)\in \Sigma_\epsilon}\langle \mathbb L(v;\psi,A), \mathcal T_1\rangle dt_1dt_2=\Re\int_{(a,b)\in \Sigma_\epsilon} \mathbb L(v;\psi,A)_1 \overline{T_1} dt_1dt_2+\int_{(a,b)\in \Sigma_\epsilon}\langle \mathbb L(v;\psi,A)_2, T_{B_1}\rangle dt_1dt_2\\
    =&-[\Delta_{\Gamma_{\epsilon}}k_1-2\epsilon^2\rho^{-2}\cos 2\epsilon \ts k_{2\theta}+\epsilon^2\rho^{-2}(2\rho^{-4}-\cos^2 2\epsilon \ts)k_1+O(\epsilon^2\nabla^2_{\Ge}k_1)]\int_{\R^2}|\mathcal T_1|^2\\
    +&4\epsilon^2 \rho^{-6} k_1\pi\int_{0}^{+\infty}\frac
    {(V')^2}{\tilde r}d \tilde r+2\epsilon^2 \rho^{-6} k_1 E-2\Im\int_{\R^2}\langle B_0,(k_{1\theta}T_1+k_{2\theta}T_2)d\theta\rangle \overline{T_1}+O(\epsilon^3\rho^{-4})\\
    =&-[\Delta_{\Gamma_{\epsilon}}k_1-2\epsilon^2\rho^{-2}\cos 2\epsilon \ts k_{2\theta}+\epsilon^2\rho^{-2}(2\rho^{-4}-\cos^2 2\epsilon \ts)k_1+O(\epsilon^2\nabla^2_{\Ge}k_1)]\int_{\R^2}|\mathcal T_1|^2+O(\epsilon^3\rho^{-4}),
\end{aligned}
\end{equation*}
which is exactly the first component appeared in Proposition \ref{TestKernel}. Similar computation gives the second component and we finish the proof of Proposition \ref{TestKernel}.

Before we state the next proposition, we define a cut-off function $\chi$ supported in $\Sigma_{\epsilon}$:
\begin{equation*}
\begin{cases}
    \chi=1&\text{ if }a^2+b^2\leq \frac12 r_\epsilon,\\
    \chi=0&\text{ if }a^2+b^2\geq r_\epsilon,
\end{cases}
\end{equation*}
where we recall that $r_\epsilon(\ts,\tth)=\frac12 \epsilon^{-\frac 12}\rho(\ts,\tth)^2$. We also define the region
\begin{equation*}
\mathcal W^R:=\{x\in \Sigma_{\epsilon}:\ \rho(\ts,\tth)<R\}
\end{equation*}
and
\begin{equation*}
\Gamma_{\epsilon}^R:=\{(\ts,\tth)\in \Gamma_{\epsilon}:\ \rho(\ts,\tth)<R\}
\end{equation*}
for some large constant $R>0$.

\begin{proposition}\label{EnergyEstiamte}
Let $N=k_1(\ts,\tth)\m+k_2(\ts,\tth)\n$ be a normal vector field that vanishes when $\rho(\ts,\tth)=R$ and set 
\begin{equation*}
v(x)=v(\ts,\tth,t_1,t_2)=(\xi,B)(\ts,\tth,t_1,t_2)= k_1(\ts,\tth)\mathcal T_1(t_1,t_2)+ k_2(\ts,\tth)\mathcal T_2(t_1,t_2).
\end{equation*}
Then we have the following estimate
\begin{equation}\label{SecondVariationRelation}
\begin{aligned}
&\mathcal{Q}(\chi v,\chi v):
=\int_{\mathcal W^R}|\nabla_A\xi|^2+|dB|^2+|d^*B|^2+|B|^2|\psi|^2+4\langle \Im(\overline{\nabla_A\psi}\xi), B\rangle\\
&+\frac{\lambda-1}{2}\Re(\overline{\psi}\xi)^2+(\lambda+\frac12)|\psi|^2|\xi|^2-\frac{\lambda}{2}|\xi|^2 dx\\
=& \int_{\Gamma_{\epsilon}^R} |\nabla^{\nu}_{\Gamma_\epsilon}N|^2-2\epsilon^2\rho^{-6} |N|^2 dvol_{\Gamma_\epsilon}\cdot \int_{\R^2}|\mathcal T_1|^2+O\left(\epsilon \int_{\Gamma_{\epsilon}^R} |\nabla N|^2+\epsilon^2\rho^{-6} |N|^2 dvol_{\Gamma_\epsilon}\right),
\end{aligned}
\end{equation}
where $\nabla^{\nu}_{\Gamma_\epsilon}N$ is the covariant derivative on the normal bundle $\mathcal{N}\Gamma_\epsilon$.
\end{proposition}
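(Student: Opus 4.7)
The plan is to reduce $\mathcal{Q}(\chi v,\chi v)$ to the Jacobi quadratic form on $\Gamma_\epsilon^R$ by separation of variables. The starting observation, verified by one integration by parts from the definitions of $\mathbb{L}$ and $\mathcal{Q}$, is that $\mathcal{Q}(\chi v,\chi v)=\int_{\R^4}\langle \mathbb{L}(\chi v;\psi,A),\chi v\rangle\,dx$. Writing $\mathbb{L}(\chi v)=\chi\mathbb{L}(v)+[\mathbb{L},\chi]v$, I will first show that the commutator contribution is negligible: it is a first-order differential operator in $v$ supported on the annulus $\{\tfrac12 r_\epsilon<\sqrt{a^2+b^2}<r_\epsilon\}$, where $|v|+|\nabla v|=O(e^{-\delta r_\epsilon})|N|$ by the exponential decay of $\mathcal{T}_1,\mathcal{T}_2$ in $\tilde r$ (Lemma \ref{EigenDecay}-style estimate). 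Since $r_\epsilon\sim \epsilon^{-1/2}\rho$, this piece is exponentially small in $\epsilon^{-1/2}$ and absorbed into the error.

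Next I will work in the Fermi coordinates $(\ts,\tth,t_1,t_2)$ and integrate the transverse variables first. Because $v=k_1\mathcal{T}_1+k_2\mathcal{T}_2$ and the pairing is $\R$-bilinear, Proposition \ref{TestKernel} gives
\begin{equation*}
\int_{\R^2}\langle \mathbb{L}(v;\psi,A),v\rangle\,dt_1 dt_2=-\langle N,L_{\Ge}N\rangle\int_{\R^2}|\mathcal{T}_1|^2+O\!\bigl(\epsilon^3\rho^{-4}(|\nabla_\Gamma^2 N|+|\nabla_\Gamma N|+|N|)|N|\bigr).
\end{equation*}
The Euclidean volume element $\sqrt{G}\,d\ts d\tth dt_1 dt_2$ differs from $dvol_{\Ge}\,dt_1dt_2$ by the factor $1+O(\epsilon\tilde r)$; after the transverse integration this discrepancy contributes an additional $O(\epsilon)$ term to the error, using the exponential decay of the $\mathcal{T}_j$ in $\tilde r$. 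Then, integrating the leading part over $\Gamma_\epsilon^R$ and invoking $L_{\Ge}N=\Delta_{\Ge}^\nu N+2\epsilon^2\rho^{-6}N$ together with the boundary condition $N|_{\partial\Gamma_\epsilon^R}=0$, one integration by parts on $\Gamma_\epsilon^R$ converts $-\int_{\Gamma_\epsilon^R}\langle N,L_{\Ge}N\rangle\,dvol_{\Ge}$ into $\int_{\Gamma_\epsilon^R}|\nabla^\nu_{\Ge}N|^2-2\epsilon^2\rho^{-6}|N|^2\,dvol_{\Ge}$; multiplication by $\int_{\R^2}|\mathcal{T}_1|^2$ then gives the main term of \eqref{SecondVariationRelation}.

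The principal obstacle will be absorbing the $O(\epsilon^3\rho^{-4}(|\nabla_\Gamma^2 N|+|\nabla_\Gamma N|+|N|)|N|)$ error from Proposition \ref{TestKernel}, integrated over $\Gamma_\epsilon^R$, into the claimed bound $O\!\bigl(\epsilon\int_{\Gamma_\epsilon^R}|\nabla N|^2+\epsilon^2\rho^{-6}|N|^2\,dvol_{\Ge}\bigr)$. The $|\nabla^2 N|$ factor must be removed by a further integration by parts on $\Gamma_\epsilon^R$ that transfers a derivative onto the weight $\rho^{-4}$ or onto $N$, producing pieces of the form $\epsilon^3\rho^{-4}|\nabla N|^2$ and $\epsilon^3\rho^{-5}|N||\nabla N|$. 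Since $\rho\ge 1$ one has $\epsilon^3\rho^{-4}\le \epsilon$, while weighted Cauchy--Schwarz yields $\epsilon^3\rho^{-5}|N||\nabla N|\le \epsilon|\nabla N|^2+\epsilon^5\rho^{-10}|N|^2\le \epsilon|\nabla N|^2+\epsilon^3\cdot\epsilon^2\rho^{-6}|N|^2$; an analogous treatment handles the $|\nabla N||N|$ and $|N|^2$ pieces. Together with the Jacobian correction already accounted for above, every error term is dominated by the right-hand side of \eqref{SecondVariationRelation}, which will complete the proof.
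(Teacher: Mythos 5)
Your proposal is correct and follows essentially the same route as the paper's own proof: reduce the quadratic form to the Jacobi form on $\Gamma_\epsilon^R$ by integrating the transverse variables first via Proposition \ref{TestKernel}, record the $O(\epsilon)$ Jacobian correction $\sqrt{G}=\sqrt{\det\tilde A}(1-\epsilon^2\tilde r^2\rho^{-6})$ using the exponential decay of $\mathcal T_1,\mathcal T_2$, integrate by parts on $\Gamma_\epsilon^R$, and absorb the remainders. Your treatment is slightly more careful than the paper's in two places: you handle the cutoff by splitting off the commutator $[\mathbb L,\chi]v$ and bounding it as exponentially small in $\epsilon^{-1/2}$ (the paper just remarks that the same computation works for $\chi v$ since it vanishes on $\partial\mathcal W^R$), and you spell out the integration by parts and weighted Cauchy--Schwarz needed to absorb the $O(\epsilon^3\rho^{-4}(|\nabla_\Gamma^2 N|+|\nabla_\Gamma N|+|N|)|N|)$ remainder from Proposition \ref{TestKernel} into $O\bigl(\epsilon\int|\nabla N|^2+\epsilon^2\rho^{-6}|N|^2\bigr)$, a step the paper states without detail. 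Both of these are refinements of presentation rather than a different argument.
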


\begin{proof}
Direct computation gives that
\begin{equation*}
dx=\sqrt{G}=\sqrt{\det\tilde A}(1-\epsilon^2 \tr \rho^{-6}).
\end{equation*}
Hence, we have
\begin{equation}\label{Testself}
\int_{\mathcal W^R}\langle \mathbb L(v;\psi,A), v\rangle dx=\int_{\Gamma_{\epsilon}^R}\int_{\tr<r_\epsilon}\langle \mathbb L(v;\psi,A), v\rangle (1-\epsilon^2 \tr^2 \rho^{-6}) da db d vol_{\Gamma_{\epsilon}}. 
\end{equation}

By Proposition \ref{TestKernel}, we find that
\begin{equation}\label{Testself1}
\begin{aligned}
&\int_{\Gamma_{\epsilon}^R}\int_{r<r_\epsilon}\langle \mathbb L(v;\psi,A), v\rangle
\\
=&-\int_{\Gamma_{\epsilon}^R}\langle L_{\Gamma_{\epsilon}}N, N\rangle dvol_{\Gamma_\epsilon}\cdot (1+O(\epsilon)) \int_{\R^2}|\mathcal T_1|^2 +\int_{\Gamma_{\epsilon}^R}O(\epsilon (D^2 k+Dk+k))\cdot k\\
=&-\int_{\Gamma_{\epsilon}^R}\langle \Delta^{\nu}_{\Gamma_{\epsilon}}N+2\epsilon^2 \rho^{-6}N, N\rangle dvol_{\Gamma_\epsilon}\cdot (1+O(\epsilon)) \int_{\R^2}|\mathcal T_1|^2+\int_{\Gamma_{\epsilon}^R}O(\epsilon (D^2 k+Dk+k))\cdot k.
\end{aligned}  
\end{equation}
Integrating by parts yields
\begin{equation*}
\begin{aligned}
&\int_{\Gamma_{\epsilon}^R}\int_{r<r_\epsilon}\langle \mathbb L(v;\psi,A), v\rangle\\
=&\int_{\Gamma_{\epsilon}^R}|\nabla^{\nu}_{\Gamma_{\epsilon}}N|^2-2\epsilon^2 \rho^{-6}|N|^2 dvol_{\Gamma_\epsilon}\cdot \int_{\R^2}|\mathcal T_1|^2 +O\left(\epsilon \int_{\Gamma_{\epsilon}^R} |\nabla N|^2+\epsilon^2\rho^{-6} |N|^2 dvol_{\Gamma_\epsilon}\right).
\end{aligned}  
\end{equation*}

Since $\mathcal T_1$ and $\mathcal T_2$ decay exponentially on $\R^2$, similar computation gives that
\begin{equation}\label{Testself2}
\int_{\Gamma_{\epsilon}^R}\int_{r<r_\epsilon}\langle \mathbb L(v;\psi,A), v\rangle \epsilon^2 \tr^2 \rho^{-6} da db d vol_{\Gamma_{\epsilon}}=O\left(\epsilon \int_{\Gamma_{\epsilon}^R} |\nabla N|^2+\epsilon^2\rho^{-6} |N|^2 dvol_{\Gamma_\epsilon}\right).
\end{equation}

Combining \eqref{Testself1} and \eqref{Testself2}, we see that \eqref{Testself} can be written as
\begin{equation*}
\begin{aligned}
\mathcal Q(v,v)=\int_{\mathcal W^R}\langle \mathbb L(v;\psi,A), v\rangle dx&= \int_{\Gamma_{\epsilon}^R} |\nabla^{\nu}_{\Gamma_\epsilon}N|^2-2\epsilon^2\rho^{-6} |N|^2 dvol_{\Gamma_\epsilon} \int_{\R^2}|\mathcal T_1|^2\\
&+O\left(\epsilon \int_{\Gamma_{\epsilon}^R} |\nabla N|^2+\epsilon^2\rho^{-6} |N|^2 dvol_{\Gamma_\epsilon}\right).      
\end{aligned}  
\end{equation*}
Then the conclusion follows for $\mathcal Q(v,v)$. A similar computation holds for $\mathcal Q(\chi v,\chi v)$ since $\chi v$ vanishes on $\partial \mathcal W^R$ and we obtain the desired result.
\end{proof}
The proposition above builds up the relationship between $\mathcal{Q}$ and the second variation of the area functional on $\Gamma_{\epsilon}$. With the stability of $\Gamma_{\epsilon}$ and Proposition \ref{EnergyEstiamte}, it suffices to show that the eigenfunctions of $\mathbb L(\cdot; \psi, A)$ with respect to negative eigenvalues (if exist) are almost $k_1(\ts,\tth)\mathcal T_1+k_2(\ts,\tth)\mathcal T_2$. To show this, we need to derive several estimates. We first show that the negative eigenvalues of $\mathbb L(\cdot; \psi, A)$ (if exist) are $O(\epsilon^2)$.

\begin{lemma}\label{EigenBound}
There exists a constant $\mu_0>0$ independent of $R$ and sufficiently small $\epsilon$, such that if $\mu\leq 0$ is an eigenvalue of problem \eqref{GLEigenBR}, then
\begin{equation*}
    \mu\geq -\mu_0\epsilon^2.
\end{equation*}
\end{lemma}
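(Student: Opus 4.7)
The plan is to test \eqref{GLEigenBR} against the eigenfunction $v=(\xi,B)$ itself, yielding $\mu\int_{B_{\epsilon^{-1}R}}\rho^{-6}|v|^2 = \mathcal Q(v,v)$, and to show directly that $\mathcal Q(v,v)\geq -\mu_0\epsilon^2\int\rho^{-6}|v|^2$. By Lemma \ref{EigenDecay} the eigenfunction decays like $e^{-\delta\tilde r}$ inside $\Sigma_\epsilon$ and like $e^{-\delta r_\epsilon}$ outside, so the $\Sigma_\epsilon^c$-contribution to both sides is exponentially small in $\epsilon$ and can be harmlessly absorbed. Inside $\Sigma_\epsilon$ I perform the slice decomposition
\begin{equation*}
v=u+w,\qquad u:=k_1(\ts,\tth)\mathcal T_1(t_1,t_2)+k_2(\ts,\tth)\mathcal T_2(t_1,t_2),\qquad k_j(\ts,\tth):=\frac{\int_{\R^2}\langle v,\mathcal T_j\rangle\,dt_1dt_2}{\int_{\R^2}|\mathcal T_j|^2},
\end{equation*}
so that $w$ is slice-orthogonal to both $\mathcal T_1,\mathcal T_2$ and $\mathcal Q(v,v)=\mathcal Q(u,u)+2\mathcal Q(u,w)+\mathcal Q(w,w)$ up to exponentially small errors, and I estimate each piece.

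For the $u$-piece, Proposition \ref{EnergyEstiamte} with $N=k_1\m+k_2\n$ gives
\begin{equation*}
\mathcal Q(u,u)=\Bigl(\int_{\Ge^R}|\nabla^\nu_\Ge N|^2-2\epsilon^2\rho^{-6}|N|^2\,dvol_\Ge\Bigr)\int_{\R^2}|\mathcal T_1|^2+O\Bigl(\epsilon\!\int_{\Ge^R}|\nabla N|^2\,dvol_\Ge+\epsilon^2\!\int_{\Ge^R}\rho^{-6}|N|^2\,dvol_\Ge\Bigr).
\end{equation*}
Since $\Gamma$ is area-minimizing (as a holomorphic curve), the main bracketed quantity is non-negative on $\Ge$ by stability; stability also forces $\int|\nabla N|^2\geq 2\epsilon^2\int\rho^{-6}|N|^2$, so the $O(\epsilon\int|\nabla N|^2)$ error is an $O(\epsilon)$-multiple of the main term and is absorbable for small $\epsilon$. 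Consequently $\mathcal Q(u,u)\geq -C\epsilon^2\int_{\R^4}\rho^{-6}|u|^2$.

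For the $w$-piece, each fixed slice sees $\mathbb L(\cdot;\psi,A)$ as an $O(\epsilon)$-perturbation of the 2D linearization $\mathbb L^{(2D)}$ at the unit-degree vortex $(\psi^{(1)},A^{(1)})$, plus tangential $(\ts,\tth)$-derivative contributions isolated via Lemma \ref{Laplace1Expansion} and Lemma \ref{d*d}. By Gustafson-Sigal \cite{GS} this 2D operator is non-negative with kernel spanned modulo gauge by the two translation zero modes $\widetilde{\mathcal T}_1,\widetilde{\mathcal T}_2$, and the gauge fixing $\Im(\overline\psi\xi)=d^*B$ built into $\mathbb L$ kills the gauge kernel, so a slice spectral gap $\int_{\R^2}\langle\mathbb L^{(2D)}w,w\rangle\geq c_0\int_{\R^2}|w|^2$ holds for slice-orthogonal, gauge-fixed $w$. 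Integrating over $\Ge$ and retaining the tangential derivatives of $w$ as a non-negative quadratic form yields $\mathcal Q(w,w)\geq \tfrac{c_0}{2}\int|w|^2 - C\epsilon^2\int\rho^{-6}|w|^2$. The cross term $\mathcal Q(u,w)$ is small because $\mathbb L^{(2D)}\mathcal T_j=O(\epsilon)$ on each slice and $w\perp\mathcal T_j$ in the slice $L^2$ sense; the residue, coming from tangential derivatives of $k_j$ paired with $w$ and from $O(\epsilon)$-perturbation terms, is absorbed by Cauchy-Schwarz into the $u$- and $w$-estimates.

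Combining the three bounds gives $\mathcal Q(v,v)\geq -C\epsilon^2\int\rho^{-6}|v|^2 + \tfrac{c_0}{4}\int|w|^2$, which forces $\mu\geq -\mu_0\epsilon^2$ with $\mu_0=C$. The principal obstacle is the quantitative slice spectral gap for $w$: the 2D vortex linearization carries the infinite-dimensional gauge kernel $(i\gamma\psi^{(1)},d\gamma)$ alongside the two translation zero modes, and positivity with a gap only materializes after the gauge condition is imposed. Establishing a gap that is uniform in the slowly varying $(\ts,\tth)$ parameter, and cleanly separating tangential derivatives from the slice operator so that no negative contribution survives, is the delicate bookkeeping; the remainder reduces to the energy estimates already packaged in Propositions \ref{TestKernel} and \ref{EnergyEstiamte}.
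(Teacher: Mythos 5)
Your proposal follows essentially the same strategy as the paper: it restricts attention to the tubular neighborhood via exponential decay, decomposes the eigenfunction into its slice projection onto $\mathcal T_1,\mathcal T_2$ plus a slice-orthogonal remainder, bounds the projected piece via Proposition \ref{EnergyEstiamte} and stability of $\Gamma$, bounds the remainder via the two-dimensional spectral gap for the degree-one vortex, and absorbs the cross term. The paper carries this out with a cutoff-localized decomposition, a mixed-boundary-condition eigenvalue problem on $\Omega=\Sigma_\epsilon\cap\{\rho<R\}$, and the lower bounds \eqref{QOmega1}--\eqref{QOmega3}, but the underlying ideas coincide with yours.
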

\begin{proof}
we dentoe $\mathcal{Q}_{\Omega}$ be the restriction of $\mathcal{Q}$ in the domain $\Omega$:
\begin{equation*}
\begin{aligned}
&\mathcal{Q}_{\Omega}(v,v):
=\int_{\Omega}|\nabla_A\xi|^2+|dB|^2+|d^*B|^2+|B|^2|\psi|^2+4\langle \Im(\overline{\nabla_A\psi}\xi), B\rangle+\frac{\lambda-1}{2}\Re(\overline{\psi}\xi)^2+(\lambda+\frac12)|\psi|^2|\xi|^2-\frac{\lambda}{2}|\xi|^2
\end{aligned}
\end{equation*}
for $v=(\xi, B)$.

Recall that $(\psi, A)=(W e^{i\tilde \varphi},Z d\tilde \varphi)+O(\epsilon^2 e^{-\delta \tilde r})$ in $\R^4\setminus \Sigma_{\epsilon}$, where $W,Z=1-O(e^{-\delta \tilde r})$ for some $0<\delta<1$ and $\varphi$ is an extension of the angle function $\phi$ in $\R^4$. Then $(\psi, A)$ is stable in $\R^4\setminus \Sigma_{\epsilon}$. Now we set $\Omega:=\Sigma_{\epsilon}\cap \{\rho<R\}$. Then for any $v$, we have
\begin{equation*}
\mathcal Q(v,v)\geq \mathcal Q_{\Omega}(v,v)+\gamma \int_{\R^4\setminus \Sigma_{\epsilon}}|v|^2
\end{equation*}
for some $\gamma>0$ independent of $\epsilon$ and $R$. In the following, we will show that 
\begin{equation*}
\mathcal{Q}_{\Omega}(v,v)\geq -\mu_0 \epsilon^2 \int_{\Omega}\rho^{-6}|v|^2.
\end{equation*}
Its corresponding eigenvalue problem is
\begin{equation}\label{GLEigenSigma}
\begin{cases}
\mathbb L(v;\psi,A)-\mu \rho^{-6}v=0&\text{ in }\Omega\\
v=0&\text{ on }\{\rho=R\}\\
\partial_{\nu}\xi-i\langle A,\nu \rangle \xi=0,\ d^*B=0,\ *dB=0 &\text{ on }\partial \Sigma_{\epsilon}.
\end{cases}
\end{equation}

For an eigenfunction $v$ solving \eqref{GLEigenSigma}, we decompose $v$ as
\begin{equation*}
v=\chi k_1(\ts,\tth)\mathcal T_1+\chi k_2(\ts,\tth)\mathcal T_2+v^\perp,
\end{equation*}
where for $j=1,2$,
\begin{equation*}
k_j(\ts,\tth)=\frac{\int_{r<r_\epsilon}\langle v(\ts,\tth,t_1,t_2), \mathcal T_j(t_1,t_2)\rangle dt_1 dt_2}{\int_{\R^2}\chi|\mathcal T_1(t_1,t_2)|^2 dt_1 dt_2}.    
\end{equation*}

From our decomposition, we see that $k_j$ vanishes on $\{\rho=R\}$, $v^\perp$ satisfies the same boundary condition as $v$ and
\begin{equation}\label{vperporthgonal}
\int_{r<r_\epsilon}\langle v^\perp(\ts,\tth,t_1,t_2), \mathcal T_j(t_1,t_2)\rangle dt_1 dt_2=0,\text{ for }j=1,2\text{ and }(\ts,\tth)\in \Gamma_{\epsilon}^R.
\end{equation}

With the decomposition, we have
\begin{equation*}
\mathcal Q_{\Omega}(v,v)=\mathcal Q_{\Omega}(v^\perp,v^\perp)+\mathcal Q_{\Omega}(\chi k_1\mathcal T_1+k_2\mathcal T_2,\chi k_1\mathcal T_1+k_2\mathcal T_2)+2\mathcal Q_{\Omega}(v^\perp,\chi k_1\mathcal T_1+\chi k_2\mathcal T_2))
\end{equation*}
and each term will be estimated in the following. For simplicity, we denote $d_{\R^2}$, $d^{*}_{\R^2}$, $\nabla_{A^{(1)},\R^2}:=d_{\R^2}-iA^{(1)}$ to be the exterior derivative, codifferential and connection gradient on the normal space $(t_1,t_2)$. And $d_{\Gamma_{\epsilon}}$, $d^{*}_{\Gamma_{\epsilon}}$, $\nabla_{\Gamma_{\epsilon}}$ be the ones on $\Gamma_{\epsilon}$.

For $Q_{\Omega}(v^\perp,v^\perp)$, we write $v^\perp=(\xi^{\perp},B^{\perp})$ and compute
\begin{equation}\label{Qxiperplowerbound}
\begin{aligned}
\int_{\Omega}|\nabla_A \xi^\perp|^2
&=\int_{\Omega}|\nabla_{A^{(1)},\R^2}\xi^\perp|^2+(1+O(\epsilon^2))|\nabla_{\Ge}\xi^\perp|^2+O(\epsilon)|\nabla_{\Ge}\xi^\perp||\nabla_{A^{(1)},\R^2}\xi^{\perp}|\\
&+O(\epsilon^2)|\nabla_{A^{(1)},\R^2}\xi^{\perp}|^2+O(\epsilon^2)|\xi^{\perp}|^2 dx\\
&\geq \int_{\rho<R}\int_{r<r_{\epsilon}}(1+O(\epsilon^2))(|\nabla_{A^{(1)},\R^2}\xi^\perp|^2+|\nabla_{\Ge}\xi^\perp|^2)+O(\epsilon^2 )|\xi^\perp|^2 dt_1 dt_2 d Vol_{\Ge}.
\end{aligned}
\end{equation}
We decompose $B^{\perp}=(B^{\perp}_1 d\ts+B^{\perp}_2 d\tth) +(B^{\perp}_3 dt_1+B^{\perp}_4 dt_2)=:B^\perp_{\Ge}+B^\perp_{\R^2}$, then the second and third terms in $Q_{\Omega}(v^\perp, v^{\perp})$ can also be computed as
\begin{equation*}
\begin{aligned}
&\int_{\Omega}|dB^\perp|^2+|d^*B^\perp|^2 dx\\
=&\int_{\Omega}\frac12 \sum_{j,k,l,m=1}^{4}\tg^{jk}\tg^{lm}B_{jl}^\perp B_{km}^\perp+\sum_{j,k=1}^{4}\left|\frac{1}{\sqrt{\tG}}\partial_k(\tg^{jk}\sqrt{\tG}B^\perp_j)\right|^2 dx\\
=&\int_{\Omega}(1+O(\epsilon^2))(|d_{\R^2}B^\perp_{\R^2}|^2+|d_{\R^2}^* B^\perp_{\R^2}|^2+|d_{\Ge}B^\perp_{\Ge}|^2+|d_{\Ge}^* B^\perp_{\Ge}|^2)\\
&+\sum_{j=1}^{2}\sum_{k=3}^{4}\tg^{jj}\tg^{kk}(B^\perp_{jk})^2+2(\rho^{-6}B^\perp_{1\ts}+\rho^{-2}B^\perp_{2\tth}+2\epsilon \rho^{-4}\cos (2\epsilon \ts)  B^\perp_1)(B^{\perp}_{3t_1}+B^\perp_{4t_2}) +O(\epsilon)|\nabla_{\Ge} B^\perp||\nabla_{\R^2}B^\perp| dx\\
=&\int_{\rho<R}\int_{r<r_{\epsilon}}(1+O(\epsilon^2))(|d_{\R^2}B^\perp_{\R^2}|^2+|d_{\R^2}^* B^\perp_{\R^2}|^2+|d_{\Ge}B^\perp_{\Ge}|^2+|d_{\Ge}^* B^\perp_{\Ge}|^2)\rho^{4}\\
&+\rho^{-2}((B^\perp_{3\ts})^2+(B^\perp_{4\ts})^2+(B^\perp_{1t_1})^2+(B^\perp_{1t_2})^2)+\rho^{2}((B^\perp_{3\tth})^2+(B^\perp_{4\tth})^2+(B^\perp_{2t_1})^2+(B^\perp_{2t_2})^2)\\
&-2\rho^{-2}(B^\perp_{1t_1}B^\perp_{3\ts}+B^\perp_{1t_2}B^\perp_{4\ts})-2\rho^{2}(B^\perp_{2t_1}B^\perp_{3\tth}+B^\perp_{2t_2}B^\perp_{4\tth})\\
&+2(\rho^{-2}B^\perp_{1\ts}+\rho^{2}B^\perp_{2\tth}+2\epsilon \cos (2\epsilon \ts) B^\perp_1)(B^{\perp}_{3t_1}+B^\perp_{4t_2}) +O(\epsilon)|\nabla_{\Ge} B^\perp||\nabla_{\R^2}B^\perp| dt_1 dt_2 d\ts d\theta .
\end{aligned}
\end{equation*}
Integrating by parts, we found that the cross terms are canceled. Hence,
\begin{equation}\label{QBperplowerbound}
\begin{aligned}
\int_{\Omega}|dB^\perp|^2+|d^*B^\perp|^2 dx\geq (1+O(\epsilon))\int_{\rho<R}\int_{r<r_{\epsilon}}|d_{\R^2}B^\perp_{\R^2}|^2+|d_{\R^2}^* B^\perp_{\R^2}|^2+|d_{\Ge}B^\perp_{\Ge}|^2+|d_{\Ge}^* B^\perp_{\Ge}|^2 dt_1 dt_2 dVol_{\Ge}. 
\end{aligned}    
\end{equation}

Combining \eqref{Qxiperplowerbound} and \eqref{QBperplowerbound}, we have
\begin{equation}\label{Qvperprough}
\begin{aligned}
\mathcal{Q}_{\Omega}(v^\perp,v^\perp)\geq& (1+O(\epsilon))\int_{\rho<R}\int_{r<r_{\epsilon}}\left[|\nabla_{A^{(1)},\R^2}\xi^\perp|^2+|d_{\R^2}B^\perp_{\R^2}|^2+|d_{\R^2}^* B^\perp_{\R^2}|^2+|B^\perp_{\R^2}|^2|\psi^{(1)}|^2 \right.\\
&\left.+4 \Im(\overline{\nabla_{A^{(1)}}\psi^{(1)}}\xi^\perp)\cdot B^\perp_{\R^2}+\frac{\lambda-1}{2}\Re(\overline{\psi^{(1)}}\xi^\perp)^2+(\lambda+\frac12)|\psi^{(1)}|^2|\xi^\perp|^2-\frac{\lambda}{2}|\xi|^2\right.\\
&\left.+|\nabla_{\Ge}\xi^\perp|^2 +|d_{\Ge}B^\perp_{\Ge}|^2+|d_{\Ge}^* B^\perp_{\Ge}|^2+O(\epsilon^2)|\xi^\perp|^2 \right]dt_1 dt_2 dVol_{\Gamma_{\epsilon}}.    
\end{aligned}  
\end{equation}

Recall that the $+1$-vortex solution $(\psi^{(1)},A^{(1)})$ is stable, then one can show that for any $a>0$, if $(\mu, C)$ is perpendicular to $(T_j, T_{B_j})$ in $B_a$ for $j=1,2$, then a standard contradiction argument (see \cite{BM1,BM2,BM3} for example) gives that 
\begin{equation*}
\begin{aligned}
&\int_{r<a}|\nabla_{A^{(1)},\R^2}\mu|^2+|d_{\R^2}C|^2+|d_{\R^2}^* C|^2+|C|^2|\psi^{(1)}|^2+4\Im(\overline{\nabla_{A^{(1)}}\psi^{(1)}}\mu)\cdot C+\frac{\lambda-1}{2}\Re(\overline{\psi^{(1)}}\mu)^2\\
&+(\lambda+\frac12)|\psi^{(1)}|^2|\mu|^2-\frac{\lambda}{2}|\mu|^2 dt_1 dt_2 \\
\geq& 3\gamma \int_{r<a}|\nabla_{A^{(1)},\R^2}\mu|^2+|d_{\R^2}C|^2+|d_{\R^2}^* C|^2+|\mu|^2+|C|^2
\end{aligned}
\end{equation*}
for some $\gamma>0$. Plugging it into \eqref{Qvperprough}, we have
\begin{equation}\label{QOmega1}
\mathcal{Q}_{\Omega}(v^\perp,v^\perp)\geq 2\gamma\int_{\Omega} |\nabla_{A^{(1)},\R^2}\xi^\perp|^2+|d_{\R^2}B^\perp_{\R^2}|^2+|d_{\R^2}^* B^\perp_{\R^2}|^2+|\xi^\perp|^2+|B^\perp_{\R^2}|^2+|\nabla_{\Ge}\xi^\perp|^2 +|d_{\Ge}B^\perp_{\Ge}|^2+|d_{\Ge}^* B^\perp_{\Ge}|^2 dx.
\end{equation}

Next, we will estimate the cross-term $\mathcal Q_{\Omega}(v^\perp,\chi k_1\mathcal T_1+\chi k_2\mathcal T_2))$, which can be written as
\begin{equation*}
\mathcal Q_{\Omega}(v^\perp,\chi k_1\mathcal T_1+\chi k_2\mathcal T_2))=\int_{\Omega}\langle -\mathbb L(\chi k_1\mathcal T_1+\chi k_2\mathcal T_2;\psi, A), v^\perp \rangle=I_1+I_2,
\end{equation*}
where
\begin{equation*}
I_1:=\int_{\Omega}\chi \langle -\mathbb L(k_1\mathcal T_1+k_2\mathcal T_2;\psi, A), v^\perp \rangle dx,
\end{equation*}
and
\begin{equation*}
\begin{aligned}
I_2:=\int_{\Omega}\langle 
\begin{pmatrix}
        2\nabla \chi\cdot \nabla(k_1T_1+k_2T_2)+(k_1T_1+k_2T_2)\Delta \chi-2i\langle A,d\chi \rangle (k_1T_1+k_2T_2) \\
        2\nabla \chi\cdot \nabla(k_1T_{B_1}+k_2T_{B_2})+(k_1T_{B_1}+k_2T_{B_2})\Delta \chi
    \end{pmatrix}
, v^\perp \rangle dx.
\end{aligned}
\end{equation*}
Since $k_j$ decays exponentially in the normal direction and $O(\rho^{-\delta})$ on $\Ge$ for some $0<\delta<\frac14$,
\begin{equation*}
I_2=o(1)\int_{\Ge^R}|\nabla_{\Ge}^\nu N|^2+\epsilon^2 \rho^{-6}|N|^2 dVol_{\Ge}+o(1)\int_{\Omega}|v^\perp|^2+|\nabla v^\perp|^2 dt_1 dt_2=:\mathbf o.
\end{equation*}
For $I_1$, similar computations as in the proof of Proposition \ref{TestKernel} show that
\begin{equation*}
I_1=\int_{\Omega}\chi \langle \Delta_{\Ge} k_1 \mathcal T_1+\Delta_{\Ge} k_2 \mathcal T_2-(\mathbb L(k_1\mathcal T_1+k_2\mathcal T_2;\psi,A)-\mathbb L(k_1\mathcal T_1+k_2\mathcal T_2;\psi_0,A_0)),v^\perp \rangle+\mathbf o.
\end{equation*}

The orthogonality condition \eqref{vperporthgonal} of $v^\perp$ implies that
\begin{equation*}
\begin{aligned}
\int_{\Omega}\chi \langle \Delta_{\Ge} k_1 \mathcal T_1+\Delta_{\Ge} k_2 \mathcal T_2, v^\perp\rangle
=&-\int_{\Omega}(1-\chi) \langle \Delta_{\Ge} k_1 \mathcal T_1+\Delta_{\Ge} k_2 \mathcal T_2,v^\perp \rangle dx\\
=&\int_{\Omega}\nabla_{\Ge}k_1\cdot [-\nabla_{\Ge}\chi\langle  \mathcal T_1,v^\perp \rangle+(1-\chi)\langle \nabla_{\Ge}\mathcal T_1,v^\perp \rangle+(1-\chi)\langle \mathcal T_1,\nabla_{\Ge} v^\perp \rangle]\\
&+\nabla_{\Ge}k_2\cdot [-\nabla_{\Ge}\chi\langle  \mathcal T_2,v^\perp \rangle+(1-\chi)\langle \nabla_{\Ge}\mathcal T_2,v^\perp \rangle+(1-\chi)\langle \mathcal T_2,\nabla_{\Ge} v^\perp \rangle] dx\\
=&\mathbf o 
\end{aligned}
\end{equation*}
due to the exponential decay of $\mathcal T_1$ and $\mathcal T_2$. For the second term (nonlinear terms), since $\eta_0, B_0=O(\epsilon^2\rho^{-4}e^{-\delta r})$, we have that
\begin{equation*}
\begin{aligned}
&\int_{\Omega}\chi\langle \mathbb L(k_1\mathcal T_1+k_2\mathcal T_2;\psi,A)-\mathbb L(k_1\mathcal T_1+k_2\mathcal T_2;\psi_0,A_0),v^\perp \rangle\geq -C\epsilon^2\int_{\Omega}\rho^{-6}|N||v^\perp|dx\\
\geq& -C\epsilon^2\nu \int_{\Ge}\rho^{-6}|N|^2 dvol_{\Ge}-C\epsilon^2 \nu^{-1}\int_{\Omega}\rho^{-6}|v^\perp|^2 dx.
\end{aligned}
\end{equation*}
Hence, we have
\begin{equation}\label{QOmega2}
\mathcal Q_{\Omega}(v^\perp,\chi k_1\mathcal T_1+\chi k_2\mathcal T_2))\geq -C\epsilon^2\nu^{-1} \int_{\Ge}\rho^{-6}|N|^2 dvol_{\Ge}-C\epsilon^2 \nu\int_{\Omega}\rho^{-6}|v^\perp|^2 dx.
\end{equation}
Finally, by Proposition \eqref{EnergyEstiamte}, we see that
\begin{equation}\label{QOmega3}
Q_{\Omega}(\chi k_1\mathcal T_1+k_2\mathcal T_2,\chi k_1\mathcal T_1+k_2\mathcal T_2)=\int_{\rho<R}|\nabla_{\Ge}^{\nu}N|^2-2\epsilon^2\rho^{-6}|N|^2 dvol_{\Ge} +\mathbf o.    
\end{equation}

Combining \eqref{QOmega1}, \eqref{QOmega2} and \eqref{QOmega3}, we find that if $\nu$ is sufficiently small, then
\begin{equation*}
Q_{\Omega}(v,v)\geq -C\epsilon^2 \int_{\Ge}\rho^{-6}|N|^2 dvol_{\Ge}\geq -\mu_0 \epsilon^2 \int_{\Omega}\rho^{-6}|v|^2,
\end{equation*}
which is our desired estimate.
\end{proof}

\section{Proof of Theorem \ref{main}}
In this section, we finish the proof of the main theorem, based on the estimates established in the previous sections. The strategy is essentially a contradiction argument. We assume to the contrary that there existed a sequence $\epsilon_n\to 0$ and $R_n\to +\infty$ such that \eqref{GLEigenBR} admits a sequence of eigenfunctions $v_{\epsilon_n, R_n}$ associated to the corresponding negative eigenvalues $\mu_{\epsilon_n, R_n}<0$. We will show that $v_{\epsilon_n, R_n}$ can be written as $v_{\epsilon_n, R_n}\thickapprox k_{1,\epsilon_n, R_n}(\ts,\tth)\mathcal T_1+k_{2,\epsilon_n, R_n}(\ts,\tth)\mathcal T_2$. Moreover, $N_{\epsilon_n, R_n}:=k_{1,\epsilon_n, R_n}\m+k_{2,\epsilon_n, R_n}\n$ is almost an eigenfunction of the Jacobi operator $L_{\Gamma_{\epsilon_n}}$, associated to the negative eigenvalue $\mu_{\epsilon_n, R_n}$. This contradicts with the stability of $\Gamma_{\epsilon_n}$. 

For simplicity, we will omit the subscript $n$. We may further assume that $\|v_{\epsilon,R}\|_{L^{\infty}}=1$. By the variational characterization of the eigenvalue, we can further assume $\mu_{\epsilon,R}$ is monotone nonincreasing, tending to some $\mu_\epsilon<0$ as $R\to +\infty$. In view of Lemma \ref{EigenBound}, the eigenvalues $\mu_{\epsilon,R}=O(\epsilon^2)$ and we denote $\mu_{\epsilon,R}=\hat \mu_{\epsilon,R}\epsilon^2$ and $\mu_{\epsilon}=\hat \mu_{\epsilon}\epsilon^2 <0$. In the following, we show that up to a subsequence, $v_{\epsilon,R}\to v_{\epsilon}$ for some eigenfunction $v_\epsilon$ of problem \eqref{GLEigenR4} associated to eigenvalue $\hat \mu_\epsilon \epsilon^2$.

Since Lemma \ref{EigenBound} shows that $\mu_{\epsilon,R}$ is uniformly bounded, by Lemma \ref{EigenDecay}, the eigenfunction $v_{\epsilon,R}$ satisfies
\begin{equation*}
|v_{\epsilon,R}|\leq C e^{-\delta r}\text{ in }\Sigma_{\epsilon}
\end{equation*}
for some constant $C>0$ independent of small $\epsilon$ and large $R$. Then the local elliptic estimates give that
\begin{equation}\label{EigenC2ExpDecay}
|D^2 v_{\epsilon,R}|+|Dv_{\epsilon,R}|+|v_{\epsilon,R}|\leq Ce^{-\delta r}\text{ in }\Sigma_{\epsilon},
\end{equation}
where $C>0$ is independent of $\epsilon$ and $R$.

In the following, we omit the subscripts $\epsilon$ and $R$. Denote $\tilde v:=\chi v$ and decompose $\tilde v$ as
\begin{equation*}
\tilde v=\chi k_1(\ts,\tth)\mathcal T_1(t_1,t_2)+\chi k_2(\ts,\tth)\mathcal T_2(t_1,t_2)+\tilde v^\perp,
\end{equation*}
where for $j=1,2$,
\begin{equation*}
k_j(\ts,\tth)=\frac{\int_{\R^2}\langle \tilde v(\ts,\tth,t_1,t_2), \mathcal T_j(t_1,t_2)\rangle dt_1 dt_2}{\int_{\R^2}\chi|\mathcal T_1(t_1,t_2)|^2 dt_1 dt_2}    
\end{equation*} 
and satisfies
\begin{equation*}
|\nabla_{\Gamma_\epsilon}^2k_j|+|\nabla_{\Gamma_\epsilon}k_j|+|k_j|\leq C
\end{equation*}
for some $C>0$ independent of $R$ and $\epsilon$, due to \eqref{EigenC2ExpDecay}. Then by our decomposition,
\begin{equation*}
\int_{\R^2}\langle \tilde v^\perp(\ts,\tth,t_1,t_2), \mathcal T_j(t_1,t_2)\rangle dt_1 dt_2=0,\text{ for }j=1,2\text{ and }(\ts,\tth)\in \Gamma_{\epsilon}^R.
\end{equation*}
In the following, we will show that $\tilde v^\perp$ is small.

By definition, $\tilde v$ satisfies
\begin{equation*}
\mathbb L(\tilde v;\psi, A)-\epsilon^2\mu\tilde v=-v\Delta \chi -2\nabla\chi\cdot \nabla v+
\begin{pmatrix}
2i (A \cdot d\chi) v\\
0
\end{pmatrix}
=:E_{\epsilon}.
\end{equation*}
Then \eqref{EigenC2ExpDecay} implies that
\begin{equation*}
|E_{\epsilon}|\leq C\epsilon^3 e^{-\delta r}\rho^{-6}
\end{equation*}
for some $\delta>0$. Write
\begin{equation*}
\mathbb L(\tilde v;\psi,A)=\mathbb L_0(\tilde v)+\mathcal G(\tilde v),
\end{equation*}
where $\mathbb L_0(\tilde v)$ is the main contribution of $\mathbb L(;\psi,A)$:
\begin{equation*}
    \mathbb L_0(\tilde v):=
    \begin{pmatrix}
        -\Delta_{\R^2,A^{(1)}}\tilde\xi+2i\epsilon\rho^{-2}\cos(2\epsilon\ts)\tilde\xi_{\tth}-\Delta_{\Ge}\tilde \xi +2i\langle \tilde B,\nabla_{A^{(1)}}\psi^{(1)}\rangle+\frac{\lambda-1}{2}(\psi^{(1)})^2\bar{\tilde \xi}+(\lambda+\frac12)|\psi^{(1)}|^2\tilde \xi-\frac\lambda 2\tilde \xi\\
        \Delta_{H,\R^2} \tilde B+\Delta_{H,\Ge} \tilde B+ 2Im(\overline{\nabla_{A^{(1)}}\psi^{(1)}}\cdot \tilde \xi)+\tilde B|\psi^{(1)}|^2
    \end{pmatrix}
\end{equation*}
for $\tilde v=(\tilde \xi, \tilde B)$ and
\begin{equation*}
\mathcal G(\tilde v):=\mathbb L(\tilde v;\psi,A)-\mathbb L_0(\tilde v).
\end{equation*}
We see that $\tilde v$ satisfies
\begin{equation*}
\mathbb L_0(\tilde v)+\mathcal{G}(\tilde v)-\mu\rho^{-6}\tilde v=E_{\epsilon}\text{ in }\Sigma_{\epsilon}.
\end{equation*}
We can extend $\tilde v$ and $E_{\epsilon}$ as zero outside $\Sigma_{\epsilon}$ to get an equation in the entire $\Ge^R \times \R^2$:
\begin{equation}\label{tildevEqn}
\mathbb L_0(\tilde v)+\tilde \chi\mathcal{G}(\tilde v)-\hat \mu\epsilon^2\rho^{-6}\tilde v=E_{\epsilon},
\end{equation}
where
\begin{equation*}
\begin{cases}
    \tilde \chi=1&\text{ if }a^2+b^2\leq r_\epsilon+1,\\
    \tilde \chi=0&\text{ if }a^2+b^2\geq r_\epsilon+2.
\end{cases}
\end{equation*}
We can also derive the equation of $\tilde v^\perp$:
\begin{equation}\label{tveqnentire}
\mathbb L_0(\tilde v^\perp)-\hat \mu\rho^{-6}\epsilon^2 \tilde v^\perp=-\mathbb 
L_0 (k_1\mathcal T_1+k_2\mathcal T_2)+E_{\epsilon}-\tilde \chi \mathcal G(\tilde v)+\hat\mu\epsilon^2\rho^{-6}(k_1\mathcal{T}_1+k_2\mathcal T_2)\text{ in }\Ge\times \R^2.
\end{equation}
Similar computation as in Proposition \ref{TestKernel} shows that
\begin{equation*}
\begin{aligned}
    &\int_{\R^2}\langle \mathbb L_0 (k_1\mathcal T_1+k_2\mathcal T_2)+\tilde\chi \mathcal{G}(k_1\mathcal T_1+k_2\mathcal T_2), \mathcal T_1 \rangle dt_1dt_2\m+\int_{\R^2}\langle \mathbb L_0 (k_1\mathcal T_1+k_2\mathcal T_2)+\tilde\chi \mathcal{G}(k_1\mathcal T_1+k_2\mathcal T_2), \mathcal T_2\rangle dt_1dt_2\n\\
    =&-L_{\Ge}N\int_{\R^2}|\mathcal T_1|^2+O(\epsilon \rho^{-4}|\nabla_{\Ge}^2 k|)+O(\epsilon^2 \rho^{-4}|\nabla_{\Ge}k|)+O(\epsilon^3 \rho^{-4}|k|),
\end{aligned}
\end{equation*}
where $N=k_1 \m+k_2 \n$.

Testing \eqref{tveqnentire} against $\mathcal T_1$ and $\mathcal T_2$, one can see that $N$ satisfies
\begin{equation*}
\begin{aligned}
&L_{\Ge}N+\hat \mu\epsilon^2 \rho^{-6}N+O(\epsilon \rho^{-4}|\nabla_{\Ge}^2 k|)+O(\epsilon^2 \rho^{-4}|\nabla_{\Ge}k|)+O(\epsilon^3 \rho^{-4}|k|)\\
=&O(\epsilon^3\rho^{-6})+\frac{\int_{\R^2}\langle \tilde\chi \mathcal G(v^\perp), \mathcal T_1\rangle }{\int_{\R^2}|\mathcal T_1|^2} \m+\frac{\int_{\R^2}\langle \tilde\chi \mathcal G(v^\perp), \mathcal T_2\rangle }{\int_{\R^2}|\mathcal T_1|^2}\n.    
\end{aligned}
\end{equation*}

If  $N(\ts,\tth)$ is regarded as a normal vector field $\tilde N(s,\theta):=\tilde k_1(s,\theta) \m+\tilde k_2(s,\theta) \n$ on $\Gamma$, then $\tilde N$ satisfies
\begin{equation}\label{tildeNEqn}
L_{\Gamma}\tilde N+\hat \mu \rho^{-6}\tilde N+O(\epsilon \rho^{-4}|)(\nabla_{\Gamma}^2 \tilde N|+|\nabla_{\Gamma}\tilde N|+|\tilde N|)=O(\epsilon\rho^{-6})+\frac{\int_{\R^2}\langle \tilde\chi \mathcal G(v^\perp), \mathcal T_1\rangle }{\epsilon^2\int_{\R^2}|\mathcal T_1|^2} \m+\frac{\int_{\R^2}\langle \tilde\chi \mathcal G(v^\perp), \mathcal T_2\rangle }{\epsilon^2\int_{\R^2}|\mathcal T_1|^2}\n.
\end{equation}

Once we have shown that the left-hand side is of $O(\epsilon)$, and up to a subsequence, $\tilde N_{\epsilon, R}$ converges to some nontrivial bounded solution $\tilde N_{0,\infty}$ of
\begin{equation*}
L_{\Gamma}\tilde N_{0,+\infty}+\hat \mu_0\rho^{-6}\tilde N_{0,+\infty}=0\text{ in }\Gamma
\end{equation*}
for some $\mu_0\leq 0$ and differs from the known bounded Jacobi fields, this contradicts with the stability and nondegeneracy of $\Gamma$. Hence, it suffices to show that for $j=1,2$,
\begin{equation}\label{mathcalGepsilon3}
\int_{\R^2}\langle \tilde\chi \mathcal G(v^\perp), \mathcal T_j\rangle=O(\epsilon^3).    
\end{equation}
To see this, we need a refined estimate of $v^\perp$. Let us decompose $\tilde v^{\perp}=\tilde v^{\perp}_1+\tilde v^\perp_2$, where $v^{\perp}_1$ satisfies
\begin{equation*}
\begin{aligned}
\mathbb L_0(\tilde v^\perp_1)-\hat \mu\epsilon^2\rho^{-6}\tilde v^\perp_1&=2\epsilon \rho^{-5}[t_1\rho^{-2}(k_{1\ts\ts}\mathcal{T}_1+k_{2\ts\ts}\mathcal{T}_2)- t_1(k_{1\tth\tth}\mathcal{T}_1+k_{2\tth\tth}\mathcal{T}_2)+2 t_2(k_{1\ts\tth}\mathcal{T}_1+k_{2\ts\tth}\mathcal{T}_2)]\\
&+O(\epsilon^2\rho^{-2}(k_{1\ts}\nabla_{\R^2}\mathcal T_1+k_{2\ts}\nabla_{\R^2}\mathcal T_2+k_{1\tth}\nabla_{\R^2}\mathcal T_1+k_{2\tth}\nabla_{\R^2}\mathcal T_2))=:\mathcal H_1,
\end{aligned}
\end{equation*}
and the right-hand side $\mathcal{H}_1$ consists of large terms in the expansion of $\mathbb 
L_0 (k_1\mathcal T_1+k_2\mathcal T_2)+\mathcal{G}(k_1\mathcal T_1+k_2\mathcal T_2)$ which are orthogonal to both $\mathcal T_1$ and $\mathcal T_2$. See $Q_1$, $Q_2$, $Q_3$ and $Q_5$ in the proof of Proposition \ref{TestKernel} for exact expressions.

Lemma \ref{lineartheory} for $\mathbb L_0$ implies the existence of such $\tilde v_1^\perp$ with
\begin{equation*}
\int_{\R^2}\langle \tilde v_1^\perp, \mathcal T_j\rangle=0,\ j=1,2
\end{equation*}
and
\begin{equation*}
\|\tilde v^\perp_1 \|_{2,2,p,\delta}\leq C \|\mathcal H_1\|_{0,4,p,\delta}\leq C\epsilon.   
\end{equation*}
As a consequence, we have
\begin{equation*}
\|\tilde \chi \mathcal G(\tilde v^\perp_1)\|_{0,4,p,\delta}\leq C\epsilon^2.
\end{equation*}
We see that $\tilde v^\perp _2$ is a solution of
\begin{equation*}
\mathbb L_0(\tilde v^\perp_2)-\hat \mu\epsilon^2\rho^{-6}\tilde v^\perp_2 +\tilde \chi \mathcal G(\tilde v^\perp_2 )=\mathcal H_1-\mathbb 
L_0 (k_1\mathcal T_1+k_2\mathcal T_2)+E_{\epsilon}-\tilde \chi \mathcal G(\tilde v^\perp_1)+\hat\mu\epsilon^2\rho^{-6}(k_1\mathcal{T}_1+k_2\mathcal T_2)
\end{equation*}
with
\begin{equation*}
\int_{\R^2}\langle \tilde v_2^\perp, \mathcal T_j\rangle=0,\ j=1,2.
\end{equation*}

Note that Proposition \ref{TestKernel}
implies that the right-hand side can be written as
\begin{equation*}
(L_{\Ge}N)_1\mathcal T_1+(L_{\Ge}N)_2\mathcal T_2+F
\end{equation*}
for some
\begin{equation*}
\|F\|_{0,4,p,\delta}\leq C\epsilon^2.
\end{equation*}
 Lemma \ref{lineartheory} for $\mathbb L_0$ tells us  that
\begin{equation}\label{tildev2Est}
\|\tilde v^\perp_2\|_{2,2,p,\delta}\leq C\epsilon^2,
\end{equation}
which implies that
\begin{equation*}
\|\tilde \chi \mathcal G(\tilde v^\perp_2)\|_{0,4,p,\delta}\leq C\epsilon^3.
\end{equation*}

Applying Lemma \ref{JacobiEigenEst} to \eqref{tildeNEqn}, we see that
\begin{equation*}
\|\nabla_{\Gamma}^2 \tilde N\|_{p,4}+\|\nabla_{\Gamma} \tilde N\|_{p,3}\leq C+C\|\tilde N\|_{L^{\infty}}\leq C
\end{equation*}
and equivalently,
\begin{equation*}
\|\nabla_{\Ge}^2  N\|_{p,4}+\|\nabla_{\Ge} N\|_{p,3}\leq C\epsilon.
\end{equation*}
This further improves the estimates of $\mathcal H_1$ to
\begin{equation*}
\|\mathcal H_1\|_{0,4,p,\delta}\leq C\epsilon^2.    
\end{equation*}

Consequently, the estimate of $\tilde v^\perp$ can also be improved to
\begin{equation}\label{tildev1Est}
\|\tilde v^\perp_1 \|_{2,2,p,\delta}\leq C \|\mathcal H_1\|_{0,4,p,\delta}\leq C\epsilon^2
\end{equation}
and therefore we get the following  estimate for $\chi \mathcal G(\tilde v^\perp_1)$:
\begin{equation*}
\|\tilde \chi \mathcal G(\tilde v^\perp_1)\|_{0,4,p,\delta}\leq C\epsilon^3.
\end{equation*}
From this we obtain \eqref{mathcalGepsilon3}.

Recall that $\tilde v_{\epsilon,R}=k_{1,\epsilon,R}\mathcal{T}_1+k_{2,\epsilon,R}\mathcal{T}_2+ \tilde v_{\epsilon,R}^\perp$, where $\tilde v_{\epsilon,R}^\perp=O(\epsilon \rho^{-2}e^{-\delta r})$ and $\| v_{\epsilon,R}\|_{L^{\infty}}=1$ by our assumption. Since $\tilde v_{\epsilon,R}$ decays exponentially, the supremum is attained in $\Sigma_\epsilon$ if $\epsilon$ is sufficiently small. This forces
\begin{equation*}
    \|\tilde N_{\epsilon,R}\|_{L^{\infty}(\Gamma)}\geq c_0
\end{equation*}
for some $c_0>0$ independent of $R$. 

By the uniform $C^1$ bound \eqref{EigenC2ExpDecay}, we can pass the limit $R\to \infty$ in \eqref{tildevEqn} and \eqref{tildeNEqn}. We see that $v_{\epsilon,R}\to v_{\epsilon}$ and
$\tilde N_{\epsilon, R}\to \tilde N_{\epsilon}$ for some nontrivial $v_{\epsilon}$ and $\tilde N_{\epsilon}=\tilde k_{1,\epsilon}\m+\tilde k_{2,\epsilon}\n$ solving
\begin{equation}\label{tildevepsiloneqn}
    \mathbb L_0(\tilde v_{\epsilon})+\mathcal{G}(\tilde{v}_{\epsilon})-\hat \mu_{\epsilon}\epsilon^2\rho^{-6}\tilde v_{\epsilon}=E_{\epsilon}\text{ in }\R^4
\end{equation}
and
\begin{equation*}
L_{\Gamma}\tilde N_{\epsilon}+\hat \mu_\epsilon\rho^{-6}\tilde N_{\epsilon}=O(\epsilon\rho^{-4})\text{ in }\Gamma
\end{equation*}
respectively for some $\hat \mu_\epsilon<0$. Moreover, they satisfy
\begin{equation}\label{tildevepsilondecom}
\tilde v_{\epsilon}=\tilde k_{1,\epsilon}\mathcal{T}_1+\tilde k_{2,\epsilon}\mathcal{T}_2+\tilde v_\epsilon^\perp\text{ in }\Sigma_\epsilon
\end{equation}
and
\begin{equation}\label{Nepsilonest}
\|\nabla_{\Gamma}^2 \tilde N_{\epsilon}\|_{p,4}+\|\nabla_{\Gamma} \tilde N_{\epsilon}\|_{p,3}\leq C[\|\tilde N_{\epsilon}\|_{L^\infty(\rho<3R_0)}+O(\epsilon)].
\end{equation}

Next, we test \eqref{tildevepsiloneqn} against $Z_j$, $j=1,\dots, 6$. Recall that $Z_j$'s are bounded kernels of $\mathbb L(\cdot;\psi,A)$ introduced in \eqref{modifiedkernel}. Then we have
\begin{equation*}
\int_{\R^4}\langle v_{\epsilon}, Z_j\rangle \rho^{-6} dx=O(\epsilon)\text{ for }j=1,\dots, 6. 
\end{equation*}
and hence,
\begin{equation*}
\int_{\Sigma_\epsilon}\langle \tilde v_{\epsilon}, Z_j\rangle \rho^{-6} dx=O(\epsilon)\text{ for }j=1,\dots, 6. 
\end{equation*}

Observe that $Z_j$ can be written as
\begin{equation*}
Z_j=(N_j\cdot\m)\mathcal{T}_1+(N_j\cdot \n)\mathcal{T}_2+O(\epsilon \rho^{-2} e^{-\delta r}).
\end{equation*}
Here we recall the bounded Jacobi fields are given in \eqref{Jacobifields}. Then in view of the decomposition \eqref{tildevepsilondecom}, the integral above passes to
\begin{equation}\label{tildeNepsortho}
\int_{\Gamma}\langle \tilde N_\epsilon, N_j\rangle \rho^{-6} dvol_{\Gamma}=O(\epsilon),\ j=1,\dots, 6.
\end{equation}

Due to the estimate \eqref{Nepsilonest}, up to a subsequence, $\tilde N_\epsilon$ converges locally uniformly to a nontrivial bounded solution $\tilde N$ to
\begin{equation*}
L_\Gamma \tilde N+\hat \mu\rho^{-6}\tilde N=0\text{ in }\Gamma
\end{equation*}
with $\hat \mu\leq 0$. Furthermore, by the stability of $\Gamma$, we see that $\hat \mu=0$.

However, passing $\epsilon\to 0$ in \eqref{tildeNepsortho}, we deduce
\begin{equation*}
\int_{\Gamma}\langle \tilde N, N_j\rangle \rho^{-6} dvol_{\Gamma}=0,\ j=1,\dots, 6,
\end{equation*}
a contradiction to the non-degeneracy of $\Gamma$. Thus we finish the proof of the stability part in Theorem \ref{main}.

We proceed to prove the non-degeneracy of $U_\epsilon=(\psi_\epsilon, A_\epsilon)$. Assume to the contrary that $\mathbb L(\cdot;\psi,A)$ admitted another bounded kernel $Z_7$ with
\begin{equation*}
\int_{\R^4}\langle Z_7, Z_j\rangle \rho^{-6} dx=0,\ j=1,\dots, 6.
\end{equation*}
Note that the arguments above also apply to $Z_7$ so that
\begin{equation*}
    Z_7=(N_{7,\epsilon}\cdot\m) \mathcal{T}_1+(N_{7,\epsilon}
    \cdot \n)\mathcal{T}_2+Z_7^\perp\text{ in }\Sigma_{\epsilon}
\end{equation*}
for some function $Z_7^\perp=O(\epsilon\rho^{-2}e^{-\delta r})$ which is orthogonal to both $\mathcal T_1$ and $\mathcal T_2$ and normal vector field $N_{7,\epsilon}$ converging locally uniformly to a bounded Jacobi field $N_7$ on $\Gamma$ that is orthogonal to $N_j$, $j=1,\dots,7$. This is a contradiction to the non-degeneracy of $\Gamma$. The proof of non-degeneracy of $U_\epsilon$ and Theorem \ref{main} is thus completed.

\end{document}